\documentclass[11pt, reqno]{amsart}
\usepackage[utf8]{inputenc}
\usepackage{amsmath}
\usepackage{amsthm}
\usepackage{amsfonts}
\usepackage{amssymb}
\usepackage{mathrsfs, mathtools}
\usepackage[colorlinks=true,linkcolor=blue,citecolor=blue,urlcolor=blue]{hyperref}

\usepackage{enumerate, xcolor, graphicx}
\definecolor{Magenta}{cmyk}{0,1,0,0}
\definecolor{dgreen}{rgb}{0.,0.6,0.}

\usepackage[top=25mm,bottom=25mm,left=27mm,right=27mm]{geometry}

\newtheorem{theorem}{Theorem}[section]
\newtheorem{proposition}{Proposition}[section]
\newtheorem{conjecture}{Conjecture}[section]

\newtheorem*{definition*}{Definition}
\newtheorem*{theorem*}{Theorem}
\newtheorem*{remark*}{Remark}
\newtheorem*{problem*}{Problem}
\newtheorem*{conjecture*}{Conjecture}
\newtheorem{lemma}{Lemma}[section]

\newcommand{\Sone}{\mathbb S^1}
\newcommand{\e}{\varepsilon}

\begin{document}
	
	\title[On conjectures of B. Berndt and A. Straub]{On Berndt-Straub conjectures about \\ zeros of generalized Ramanujan polynomials}
	

	\author[S. Pathak]{Siddhi Pathak}
	
	

	\address{(S. Pathak) Chennai Mathematical Institute, H-1 SIPCOT IT Park, Siruseri, Kelambakkam, Tamil Nadu, India 603103.}
	
	\date{\today}
	
	\email{siddhi@cmi.ac.in}
	
	\subjclass[2020]{11M06, 11B68, 30C15}
\keywords{Generalized Ramanujan polynomial, Dirichlet characters, unimodularity of zeros}
	
	\thanks{Research of the author was partially supported by an INSPIRE Faculty fellowship.}

        \begin{abstract}
We settle two conjectures of Berndt and Straub asserting that all nonzero
zeros of the generalized Ramanujan polynomials $R_k(x;\chi,\psi)$ and
$S_{2n}(x;\chi,\chi)$ lie on the unit circle for real nonprincipal
Dirichlet characters.
We prove their conjecture for $R_k$.
For $S_{2n}$, we prove that at most four nonzero zeros lie off the unit
circle, all purely imaginary. If
$\chi_{163}=\left(\frac{\cdot}{163}\right)$ is the primitive quadratic
character modulo $163$, then $S_6(x;\chi_{163},\chi_{163})$ has four
exceptional zeros, showing that this bound is sharp.
This disproves the second conjecture and establishes that the \textit{`Riemann
hypothesis for period polynomials'} can fail for Eisenstein newforms.
Further, we introduce a transcendental invariant whose sign
determines whether exceptional zeros of $S_{2n}(x;\chi,\chi)$ occur in all sufficiently large
degrees, and show that these zeros converge to transcendental limits
off the unit circle.
For primitive odd characters, we relate this invariant and these
limits to Fekete polynomials.
\end{abstract}

	\maketitle
	
	\section{\bf Introduction}
	\bigskip

    Let $f\in S_k(\Gamma_0(N))$ be a newform of even weight $k\geq4$ and trivial Nebentypus. The `period polynomial' associated with $f$ is defined as the period integral
\[
 r_f(X)=\int_0^{i\infty}f(\tau)(\tau-X)^{k-2}\,d\tau,
\]
 and has the expansion
\[
 r_f(X)=
 \sum_{j=0}^{k-2}\binom{k-2}{j}
 i^{j+1}\frac{\Gamma(j+1)}{(2\pi)^{j+1}}
 L(f,j+1)(-X)^{k-2-j};
\]
see \cite[Section 1, equations (1.1)--(1.3)]{JMOS}. Here $L(f,s)$ denotes the $L$-function associated with the cuspform $f$. \\

The functional equation of $L(f,s)$, namely,
$\Lambda(f,s)=\varepsilon(f)\Lambda(f,k-s)$, implies that $r_f$ satisfies
\[
r_f(z)
=-i^k\, \varepsilon(f)\, (\sqrt{N}\,z)^{k-2} \, \, 
r_f\!\left(-\frac{1}{Nz}\right).
\]
Therefore, nonzero
zeros of $r_f$ are invariant under $\alpha\mapsto-1/(N\alpha)$, suggesting that all its zeros lie
on the circle $|X|=N^{-1/2}$. For level one, Conrey,
Farmer and Imamo\u glu \cite{CFI} established unimodularity of nontrivial zeros for the odd part of $r_f$, while El-Guindy and Raji \cite{EGR}
proved it for the full period polynomials. Jin, Ma,
Ono and Soundararajan \cite[Theorem 1.1]{JMOS}
subsequently established the assertion for
newforms described above with arbitrary level and termed it the `Riemann hypothesis for period polynomials'. \\

For Eisenstein series, the period integral above requires
\textit{regularization} in order to converge because of the nonzero constant terms at
the cusps. The resulting period function is a Laurent
polynomial with coefficients in terms of
special values of the associated $L$-function (see \cite[Section 5]{BS}). For the level-one Eisenstein series of even weight $k\geq4$, the odd part of the corresponding period function is, up to
a scalar factor, 
\[
 \sum_{j=0}^{k/2}
 \frac{B_{2j}}{(2j)!}
 \frac{B_{k-2j}}{(k-2j)!}\,X^{2j-1},
\]
where $B_n$ denotes the $n$-th Bernoulli number \cite[equation (35)]{BS}. Multiplying throughout by $X$, we obtain a polynomial, which is more commonly known as a \textit{Ramanujan polynomial}, introduced in \cite{MSW}, as it connects the period relation with Ramanujan's formula for odd zeta values. \\

In the same paper, Murty, Smyth and Wang \cite{MSW} proved that all except four zeros of these Ramanujan polynomials lie on the unit circle; the four exceptions are real and form two reciprocal pairs. Lal\'in and Smyth \cite{LS} established unimodularity for zeros of the polynomial obtained from the full level-one Eisenstein period function by multiplication by $X$. For certain Eisenstein series on the Fricke groups $\Gamma_0^+(N)$, Choi and Im \cite{ChoiIm} proved that all zeros of the associated period functions lie on the circle $|X|=N^{-1/2}$. More recently, Fichter, Shah and Wong \cite{FSW} proved that the even-indexed members of a family of Ramanujan-like polynomials related to double Mordell–Tornheim series have all their zeros on the unit circle. In \cite{BS}, Berndt and Straub discussed this framework in the context of Eisenstein series attached to Dirichlet characters through their study of Eichler integrals. \\


Let $\chi$ and $\psi$ be \textit{primitive} nonprincipal characters
of conductors $L$ and $M$, respectively, and suppose that
$k\geq4$ and $\chi(-1)\psi(-1)=(-1)^k$. Following \cite[Section 4, equation (15)]{BS}, set
\[
 E_k(\tau;\chi,\psi)
 =\sum_{\substack{(m,n)\in\mathbb Z^2\\(m,n)\ne(0,0)}}
   \frac{\chi(m)\psi(n)}{(m\tau+n)^k},
\]
so that $E_k(M\tau;\chi,\psi) \in M_k(\Gamma_0(LM),\chi\overline{\psi})$. Define the period polynomial of $E_k(\tau;\chi,\psi)$ by
\[
 r_{k;\chi,\psi}(X)
 :=\int_0^{i\infty}
 E_k(\tau;\chi,\psi)(\tau-X)^{k-2}\,d\tau.
\]
The nonprincipality of $\chi$ makes the constant term
at $\infty$ vanish. The transformation identity
\[
 \tau^{-k}E_k(-1/\tau;\chi,\psi)
 =\chi(-1)E_k(\tau;\psi,\chi)
\]
and the nonprincipality of $\psi$ give the corresponding
vanishing at $0$. Hence the integral converges absolutely
and defines a polynomial of degree at most $k-2$.
Its coefficients admit the explicit evaluation
\cite[Theorem 6.1]{BS}
\[
 r_{k;\chi,\psi}(X)
 =-\frac{4\psi(-1)}{k-1}
   \sum_{\substack{1\leq j\leq k-1\\
                   (-1)^j=\chi(-1)}}
   L(j,\chi)L(k-j,\psi)X^{k-j-1}.
\]

For a nonprincipal character $\eta$ modulo $q$, the generalized Bernoulli numbers $B_{j,\eta}$ are defined via the series
\[
 \sum_{a=1}^{q}\eta(a)\frac{te^{at}}{e^{qt}-1}
 =\sum_{j\geq0}B_{j,\eta}\frac{t^j}{j!}.
\]
In their work on Eichler integrals of Eisenstein series, Berndt and Straub \cite{BS} considered the two families of polynomials for nonprincipal characters $\chi$ mod $L$ and $\psi$ mod $M$:
\begin{align}
 S_k(X;\chi,\psi)
 &=\sum_{s=0}^{k}
 \frac{B_{s,\chi}}{s!}\frac{B_{k-s,\psi}}{(k-s)!}
 \left(\frac{LX}{M}\right)^{k-s-1},
   \label{Sdef}\\
 R_k(X;\chi,\psi)
 &=\sum_{s=0}^{k}
 \frac{B_{s,\chi}}{s!}\frac{B_{k-s,\psi}}{(k-s)!}
 \left(\frac{X-1}{M}\right)^{k-s-1}(1-X^{s-1}).
   \label{Rdef}
\end{align}
The extremal terms in the original definitions vanish
because $B_{0,\chi}=B_{0,\psi}=0$. Moreover,
$B_{j,\chi}=0$ unless $\chi(-1)=(-1)^j$, so a nonzero
polynomial requires
\[
 \chi(-1)\psi(-1)=(-1)^k.
\]
The two families satisfy
\[
 R_k(X;\chi,\psi)
 =
 S_k\!\left(\frac{X-1}{L};\chi,\psi\right)
 -
 X^{k-2}
 S_k\!\left(\frac{X-1}{LX};\chi,\psi\right).
\]\\

Recall that if $\eta$ is primitive of conductor
$q$ and $j\geq1$ is an integer satisfying
$\eta(-1)=(-1)^j$, one has
\[
 L(j,\eta)
 =(-1)^{j-1}\frac{G(\eta)}{2}
   \left(\frac{2\pi i}{q}\right)^j
   \frac{B_{j,\overline{\eta}}}{j!},
\]
as in \cite[equation (23)]{BS}. Here $G(\eta)$ denotes the associated Gauss sum. Thus, one obtains the relation that 
\[
 r_{k;\chi,\psi}(X)
 =- \left( \frac{\chi(-1)G(\chi)G(\psi)(2\pi i)^k}
         {(k-1)L^{k-1}M} \right) \,
   S_k(X;\overline{\chi},\overline{\psi}).
\]
Thus the zeros of $S_k$ are precisely those of the
corresponding Eisenstein period polynomial. While these interpretations assume primitivity of
characters, the Bernoulli-numbers definitions of both families extend to arbitrary Dirichlet characters. Motivated by these relations and numerical evidence, Berndt and Straub formulated the following two conjectures on the location of their zeros.

\begin{conjecture}[Berndt--Straub, {\cite[Conjecture 7.3]{BS}}]\label{intro:conjR}
Let $\chi$ and $\psi$ be nonprincipal real Dirichlet characters. If $R_k(x;\chi,\psi)$ is nonzero, then every zero of $R_k(x;\chi,\psi)$ lies on $\Sone$.
\end{conjecture}

\begin{conjecture}[Berndt--Straub, {\cite[Conjecture 7.9]{BS}}]\label{intro:conjS}
Let $\chi$ be a nonprincipal real Dirichlet character. If $S_{2n}(x;\chi,\chi)$ is nonzero, then every nonzero zero of $S_{2n}(x;\chi,\chi)$ lies on $\Sone$.
\end{conjecture}

Our main theorems resolve the above two conjectures as follows. 

\begin{theorem}\label{intro:main-R-k}
Conjecture~\ref{intro:conjR} holds. 
\end{theorem}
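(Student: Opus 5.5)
The plan is to reduce Theorem~\ref{intro:main-R-k} to counting sign changes of a real trigonometric function on $\Sone$, in the style of the proofs for the level-one Ramanujan polynomials \cite{MSW, LS}.

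\textbf{Step 1: symmetry.} I would first check the reciprocity
\[
X^{k-2}R_k(1/X;\chi,\psi)=\pm R_k(X;\chi,\psi),
\]
using $\chi(-1)\psi(-1)=(-1)^k$. This can be read off termwise from \eqref{Rdef}, since $(1/X-1)^{k-s-1}=(-1)^{k-s-1}X^{-(k-s-1)}(X-1)^{k-s-1}$, with the analogous identity for $1-X^{-(s-1)}$. Consequently, for $X=e^{i\theta}$ the function
\[
F(\theta)=c\,e^{-i(k-2)\theta/2}R_k(e^{i\theta};\chi,\psi)
\]
is real for a suitable unimodular constant $c$. Since $\deg R_k\le k-2$, once the degree is pinned down (or the trivial zeros at $X=1$ are accounted for), it suffices to exhibit $k-2$ sign changes of $F$ on $[0,2\pi)$, counted with the forced zeros such as $\theta=0$.

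\textbf{Step 2: Bernoulli numbers to $L$-values.} For primitive $\chi^\ast$ inducing $\chi$, I would use
\[
B_{s,\chi}=B_{s,\chi^\ast}\prod_{p\mid L}\bigl(1-\chi^\ast(p)p^{s-1}\bigr)
\]
together with the relation between $B_{s,\chi^\ast}$ and $L(s,\chi^\ast)$ quoted above. This rewrites each coefficient as
\[
\frac{B_{s,\chi}}{s!}\frac{B_{k-s,\psi}}{(k-s)!}=C\,\Bigl(\frac{L^\ast}{2\pi}\Bigr)^{s}\Bigl(\frac{M^\ast}{2\pi}\Bigr)^{k-s}\lambda_s,
\]
where $C$ is an explicit constant independent of $s$ and $\lambda_s=L(s,\chi^\ast)L(k-s,\psi^\ast)\times(\text{Euler factors})$. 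Since the characters are real, $\lambda_s$ is real. Writing $|X-1|=2\sin(\theta/2)$, I would set $\lambda_s=1$ in the extreme ranges of $s$ (near $1$ and near $k-1$), where one $L$-value is $1+O(2^{-s})$. The resulting model sum collapses into a closed form $F_0(\theta)$: a combination of two geometric-type series, equivalently $\sin$/$\cos$ expressions in $(k-2)\theta/2$ weighted by powers of $\rho(\theta)=\frac{L^\ast}{2\pi}\cdot\frac{2\sin(\theta/2)}{M}$ and its reciprocal. I would show that $F_0$ has exactly the required sign pattern at the points $\theta_j=\frac{2\pi j}{k-2}$ (or their shifts), and that $|F_0(\theta_j)|$ is bounded below explicitly.

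\textbf{Step 3: the remainder.} This is where I expect the main obstacle. I must show
\[
|F(\theta_j)-F_0(\theta_j)|<|F_0(\theta_j)|
\]
uniformly in $k$, the conductors, and imprimitive characters. The difficulties are that the Euler-factor corrections grow like $p^{s-1}$, and that the middle terms with both $L$-values bounded away from $1$ are not small individually. I would first try splitting the sum at $s\approx k/2$. On each half I would compare term ratios using
\[
\zeta(2s)/\zeta(s)\le |L(s,\cdot)|\le\zeta(s),
\]
and show that the terms decay geometrically away from the dominant end, so the tail is bounded by a constant times the first neglected term. The imprimitive factors should be absorbed by noting that they effectively replace $L^\ast$ by $L$ in the dominant ratio, which only strengthens the decay. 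The Euler-factor remainder is the delicate part: I would try to prove geometric decay with an explicit ratio depending only on $\min(L,M)\ge 3$ or $4$, which would make the perturbation bound uniform.

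\textbf{Step 4: finite cases.} The finitely many small $k$ and small conductors for which these estimates fail I would dispose of by direct computation of $R_k$, which is feasible because only finitely many real characters of small conductor arise.
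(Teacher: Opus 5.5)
There is a genuine gap. Step~3, which you yourself flag as the main obstacle, is never carried out, and the model of Step~2 cannot make it work.

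\textbf{The model $F_0$ cannot be uniform.} On $\Sone$, each step of $2$ in $u=k-s$ shrinks the terms of $R_k$ by roughly $\bigl(Q_\psi|X-1|/(MQ_\chi)\bigr)^2\le 4/Q_\chi^2\le 4/9$. No factor $2\pi$ survives, and the conductor of $\chi$ sits in the denominator, so your $\rho(\theta)$ is misstated. Hence $R_k$ is dominated by the two terms $u=\varepsilon$ and $u=\varepsilon+2$, $\varepsilon\in\{1,2\}$. This is one end only, unlike the level-one Ramanujan polynomials, and the middle terms you worry about are exponentially small.

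For exactly these two dominant terms, $\lambda_s$ contains $V_\varepsilon(\psi)=L(\varepsilon,\psi^\ast)\prod_{p\in T_\psi}(1-\psi^\ast(p)p^{1-\varepsilon})$ and its analogue at $\varepsilon+2$. Neither is $1+O(2^{-s})$. The first vanishes whenever $\psi$ is odd and $\psi^\ast(p)=1$ for some $p\mid M$ not dividing its conductor; then $B_{1,\psi}=0$ and $X=1$ becomes a triple zero. In general, the ratio $c_0/c_1$ of the two leading coefficients is bounded neither above nor away from $0$ as $\psi$ varies. So no model $F_0$ with $\lambda_s=1$ can satisfy $|F-F_0|<|F_0|$ uniformly. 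Since the offending characters form infinite families, Step~4 cannot absorb them either.

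\textbf{The missing idea: choose nodes that make the argument blind to $c_0/c_1$.} The paper writes $R_k=\pm(x-1)^{\varepsilon}\sum_j c_jP_{D,j}(x)$, where $P_{D,j}(x)=(x-1)^{2j}(1+x+\cdots+x^{D-2j})$ and $D=k-2-\varepsilon$. All $c_j\ge0$ by the sign coherence of the products $B_{s,\chi}B_{k-s,\psi}$. It tests signs at the $D$-th roots of unity, $X=e^{2it_\ell}$ with $t_\ell=\ell\pi/D$. There the normalized contributions of $P_{D,0}$ and $P_{D,1}$ both have sign $(-1)^\ell$. So $c_0\ge0$ may be anything, and one needs only $\frac94\frac{c_2}{c_1}+\sum_{j\ge3}4^{j-1}\frac{c_j}{c_1}<1$. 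This holds uniformly, since $c_{r+1}/c_1=(Q_\psi/(MQ_\chi))^{2r}\frac{V_{D-2r}(\chi)}{V_D(\chi)}\frac{V_{\varepsilon+2+2r}(\psi)}{V_{\varepsilon+2}(\psi)}\le C\,16^{-r}$, or $C\,9^{-r}$ in the conductor-$3$ case.

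Your nodes $e^{2\pi ij/(k-2)}$ lack this alignment:
\begin{enumerate}[(a)]
\item For odd $\psi$, the dominant $P_{D,0}$ contribution vanishes identically there, since $D+1=k-2$. The sign is then left to the $P_{D,1}$ term, whose alternation breaks at $\theta=\pi$.
\item For even $\psi$, and at half-shifted nodes, the $P_{D,0}$ and $P_{D,1}$ contributions have opposite signs on an arc around $X=1$. The uncontrolled ratio $c_0/c_1$ then decides the sign there.
\end{enumerate}

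\textbf{The last two zeros need a separate argument.} The sign count yields only $D-2$ zeros on $\Sone$. The paper does not place the last two by more sign changes, as your Step~1 demands. Instead it uses two facts. A nonreal off-circle zero of a real reciprocal polynomial brings three companions, which there is no room for. Real zeros other than $\pm1$ are ruled out because even-degree geometric sums are positive on $\mathbb R$.
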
 

\begin{theorem}\label{intro:main-S-2n}
 For every nonprincipal real character $\chi$ and every nonzero polynomial $S_{2n}(x;\chi,\chi)$, at most four nonzero zeros lie off $\Sone$. If such zeros occur, they form a purely imaginary reciprocal quartet
\[
 \pm i\sqrt\rho,\qquad \pm i/\sqrt\rho,\qquad \rho>1.
\]
If $\chi$ is a primitive even real character, then $S_{2n}(x;\chi,\chi)$ has all its nonzero zeros on the unit circle.
\end{theorem}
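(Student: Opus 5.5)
The plan is to reduce $S_{2n}(x;\chi,\chi)$ to a self-inversive polynomial in $y=x^2$ with real coefficients. Then I would count zeros on $\Sone$ by a sign-change argument on the circle, in the style of Murty--Smyth--Wang \cite{MSW} and Lal\'in--Smyth \cite{LS}.

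\textbf{Step 1: structure of the polynomial.} Since $\psi=\chi$, we have $L/M=1$, and
\[
S_{2n}(x;\chi,\chi)=\sum_{s}\frac{B_{s,\chi}B_{2n-s,\chi}}{s!(2n-s)!}x^{2n-s-1}.
\]
The coefficients are invariant under $s\mapsto 2n-s$, so $x^{2n-2}S_{2n}(1/x)=S_{2n}(x)$. Because $B_{s,\chi}=0$ unless $(-1)^s=\chi(-1)$, only one parity of $s$ occurs. After removing a factor $x$ in the even case, we get $S_{2n}(x)=x^{\epsilon}P(x^2)$, where $P$ is a real palindromic polynomial in $y$. Its nonzero roots are therefore closed under $y\mapsto 1/y$ and $y\mapsto\bar y$.

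Suppose we know that at most two roots of $P$ lie off $\Sone$. A non-real off-circle root $y$ would force the four roots $y,\bar y,1/y,1/\bar y$, which is too many. Hence the exceptional roots form a real pair $\rho^{\pm1}$ (or $-\rho^{\pm1}$). Next I would show that $P$ has no positive real roots. I expect this from a sign pattern: for real $\chi$, the quantity $B_{s,\chi}B_{2n-s,\chi}$ should have sign $(-1)^{(s-\text{parity})/2}$-type alternation. This comes from the relation to $L(j,\chi)L(2n-j,\chi)>0$ through the displayed formula for $L(j,\eta)$, using $G(\chi)^2=\chi(-1)q$. It makes $P(y)$ have constant sign for $y>0$, so the exceptional $y$ are negative. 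This gives $x=\pm i\sqrt\rho,\pm i/\sqrt\rho$ with $\rho>1$.

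\textbf{Step 2: counting zeros on the circle.} For primitive $\chi$ of conductor $q$, I would use the $L$-value formula to write
\[
\frac{B_{j,\chi}}{j!}=c\,(\pm1)\left(\frac{q}{2\pi}\right)^j L(j,\chi),
\]
so that each coefficient equals a common factor $(q/2\pi)^{2n}$ times $\pm L(j,\chi)L(2n-j,\chi)$. Thus $P(y)$ is, up to a constant, $\sum_j (-1)^{\cdot}L(j,\chi)L(2n-j,\chi)y^{\cdot}$.

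Since $|L(j,\chi)-1|\le \zeta(j)-1$ decays geometrically, $P$ is a small perturbation of the ``model'' polynomial obtained from the sum with all $L$-values equal to $1$, corrected only at the extreme indices $j=1,2,3$ and their mirror images. On $y=e^{i\theta}$ I would take the real function $h(\theta)=e^{-iN\theta/2}P(e^{i\theta})$, with $N=\deg P$. Its main part is an explicit trigonometric sum: a cosine of the top frequency weighted by $L(j_0,\chi)L(2n-j_0,\chi)$, plus the remainder. Evaluating $h$ at roughly $N$ equally spaced points where the main cosine is $\pm$ its maximum, I would show that the tail is dominated. That yields at least $N-2$ sign changes, hence at least $N-2$ roots on $\Sone$.

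For even primitive $\chi$, the term $L(1,\chi)$ is absent. The leading weights are then $L(2,\chi)L(2n-2,\chi)$, and the tail is small enough to get all $N$ sign changes. For instance, one can include the endpoints $\theta=0,\pi$ and use the sign of $h$ there, which matches the paper's final assertion. For odd $\chi$, the weight $L(1,\chi)$ can be small or large relative to $L(3,\chi)$, and this is exactly what allows (but limits to two) the exceptional $y$-roots.

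\textbf{Step 3: imprimitive characters and small cases.} If $\chi$ is induced from a primitive $\chi^*$ modulo $q^*$ and has modulus $q$, I would use
\[
B_{j,\chi}=B_{j,\chi^*}\prod_{p\mid q}\bigl(1-\chi^*(p)p^{j-1}\bigr).
\]
In $L$-language this multiplies $L(j,\chi^*)$ by an Euler factor, giving $L(j,\chi)$ for the imprimitive character. The same perturbative scheme then applies with $L(j,\chi)=L(j,\chi^*)\prod_{p\mid q}(1-\chi^*(p)p^{-j})$, which is still within $\zeta(j)-1$-type bounds of $1$ for $j\ge2$. The finitely many small $n$ (and small moduli where constants are not yet effective) would be checked directly.

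The main obstacle is Step 2 uniformly in $n$, $q$ and imprimitivity. The sign-change bounds must be sharp near the ends, where $L(1,\chi)$ and $L(2,\chi)$ can be far from $1$, and must lose at most two sign changes. My first attempt would be to isolate the two or three extreme terms exactly, bound the rest by $\sum_{j\ge 3}(\zeta(j)-1)$, and choose the sample points adapted to the extreme frequency rather than equally spaced.
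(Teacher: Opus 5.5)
\paragraph{The gap: where you sample.} The problem is in Step 2, at the points where you evaluate $h(\theta)=e^{-iN\theta/2}P(e^{i\theta})$. Write $P(y)=A(1+y^N)+\sum_{r=1}^{N-1}a_ry^r$ with $a_r=a_{N-r}$, normalized so that the middle coefficients are close to $1$. The outer coefficient $A$ is \emph{not} close to $1$. Over the primes dividing the modulus but not the conductor, it is essentially
\begin{itemize}
\item $L(1,\chi^*)\prod_p(1-\chi^*(p))$ for odd $\chi$;
\item $L(2,\chi^*)\prod_p(1-\chi^*(p)/p)$ for even $\chi$.
\end{itemize}
Either can take essentially any positive value. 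At your nodes $\theta_k=2\pi k/N$, where $\cos(N\theta/2)=\pm1$, one has exactly
\[
(-1)^k h(\theta_k)=2A-1+\sum_{r=1}^{N-1}(a_r-1)\cos(r\theta_k),\qquad 1\le k\le N-1.
\]
So the top-frequency term never dominates the other $N-1$ terms. The signs are decided by $2A-1$ against a perturbation whose $r=1,N-1$ part, $2(a_1-1)\cos\theta_k$, is not small.

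Take $\chi=\left(\frac{\cdot}{43}\right)$. Then $A\approx L(1,\chi)=\pi/\sqrt{43}\approx0.48$ and $a_1\approx L(3,\chi)\approx0.85$. So the right side is about $-0.04-0.3\cos\theta_k$ up to smaller terms: negative near $\theta=0$ and positive near $\theta=\pi$. Together with $h(0)=P(1)>0$, sign changes at these nodes certify only $N-4$ zeros on $\Sone$. This middle range of $A$ is exactly where exceptional zeros appear or disappear. Hence no sharpening of the tail estimate at these nodes gives ``at most two exceptional $y$-roots'' uniformly, and the same failure occurs for even imprimitive characters. For primitive even $\chi$ your scheme does work: $A>611/960$ and $\sum_r|a_r-1|\le 11/48<2A-1$, which gives all $N$ zeros at once. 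This is exactly the paper's bound for $H_d(-1)$.

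\paragraph{The missing idea, and two corrections.} You should sample where the outer term \emph{vanishes}. At $\theta_j=(2j-1)\pi/N$, $1\le j\le\lfloor N/2\rfloor$, one has $\cos(N\theta_j/2)=0$, so $A$ drops out entirely. The model with all middle coefficients equal to $1$ then gives exactly $(-1)^{j-1}\cot(\theta_j/2)$. The inequalities $|\sin r\theta|\le r\sin\theta$ and $2\sin\theta\tan(\theta/2)\le4$ bound the relative error by $4\sum_r r|a_r-1|$, with a modified central term for even $N$. This yields $N-2$ unimodular zeros regardless of $L(1,\chi)$. When $\chi^*(2)=-1$ this crude bound is too weak, and the $r=1,2$ terms must be kept exact, as the paper does.

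Your Step 1 then finishes the argument, with two corrections.
\begin{itemize}
\item The products $B_{s,\chi}B_{2n-s,\chi}$ have \emph{constant} sign: your own Gauss-sum computation gives a $j$-independent constant times $L(j,\chi)L(2n-j,\chi)$. An alternating pattern would place the exceptional roots on $y>0$, making them real in $x$.
\item In Step 3, the normalized Euler factor attached to $B_{j,\chi}$ is $1-\chi^*(p)p^{1-j}$, not the factor $1-\chi^*(p)p^{-j}$ of $L(j,\chi)$. The pulled-out factors $-\chi^*(p)p^{j-1}$ combine, over $j$ and $2n-j$, to the $j$-independent $p^{2n-2}$. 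For $j=1$ the normalized factor equals $1-\chi^*(p)\in\{0,2\}$, so $A$ can vanish; then $P=yK$ and one works with $K$. For $j=2$ the product over $p$ can be arbitrarily small or large. This is why $A$ must be eliminated rather than estimated.
\end{itemize}
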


Furthermore, we establish that Conjecture~\ref{intro:conjS} is false in general with the following counterexample.\\
\textbf{Example.} Let  $\chi_{163}=(\frac{\cdot}{163})$ be the primitive quadratic character mod $163$. The finite formula
\[
B_{j,\chi_{163}}=163^{j-1}\sum_{a=1}^{162}\chi_{163}(a)B_j(a/163)
\]
gives
\[
B_{1,\chi_{163}}=-1,\qquad B_{3,\chi_{163}}=13890,\qquad
B_{5,\chi_{163}}=-213225610.
\]
Substitution in \eqref{Sdef} yields
\begin{equation}\label{intro:example}
 S_6(x;\chi_{163},\chi_{163})
 =\frac1{12}\bigl(21322561x^4+64310700x^2+21322561\bigr).
\end{equation}
The middle coefficient above exceeds twice the outer coefficient. Consequently the reciprocal quadratic in $y=x^2$ has two negative reciprocal roots distinct from $-1$, and all four zeros in $x$ are off the unit circle. Thus, Conjecture~\ref{intro:conjS} fails to hold in general, even if $\chi$ is a \textit{primitive} odd character. \\

This example also shows that the Riemann hypothesis for period
polynomials does not extend to all Eisenstein newforms.
Indeed, $E_6(163\tau;\chi_{163},\chi_{163})$ is, up to
normalization, an Eisenstein newform of level $163^2$ and
trivial Nebentypus, and an eigenform for the full Hecke algebra
\cite[Section~2]{LinowitzThompson}.
Its full period polynomial is a nonzero scalar multiple of
$S_6(163X;\chi_{163},\chi_{163})$ and therefore has four zeros
off the expected circle $|X|=1/163$.
Thus the failure occurs even for primitive characters
and a Hecke eigenform that is `new' at its level.
This contrasts with the theorem for cuspidal newforms
\cite{JMOS} and the unimodularity result for full level-one
Eisenstein period functions \cite{LS}.\\


Our proofs combine a Chebyshev sign argument with explicit bounds for Dirichlet $L$-values. For $R_k$, the sign argument accounts for all but at most two zeros, while a decomposition into geometric sums rules out real zeros off the unit circle. For $S_{2n}$, we apply the same argument after the substitution $y=x^2$. In this case, a reciprocal pair of negative zeros may occur, with its existence determined by the sign of an appropriate endpoint value or derivative. This distinction explains the different conclusions for the two families. We give direct proofs of all coefficient estimates.\\

The counterexample above also raises the question of whether, for a fixed
character $\chi$, zeros off the unit circle occur in infinitely many
degrees. For primitive even characters, and for odd characters with
$B_{1,\chi}=0$, all nonzero zeros of every nonzero polynomial
$S_{2n}(x;\chi,\chi)$ lie on $\Sone$. For the remaining characters, to determine whether exceptional zeros persist as the degree
increases, we introduce a function whose value at $1$ governs
their eventual occurrence. This function also has arithmetic
properties of independent interest.\\

Let $\chi$ be a real nonprincipal character modulo $N$,
induced by the primitive character $\chi^*$ of conductor $f$.
Assume that $B_{1,\chi}\neq0$ if $\chi$ is odd, and that
$\chi$ is imprimitive if $\chi$ is even. Put $\e=1$ or $2$,
respectively, and define
\[
\alpha_r=
L(2r+\e,\chi^*)
\prod_{\substack{p\mid N\\p\nmid f}}
\bigl(1-\chi^*(p)p^{1-2r-\e}\bigr),
\qquad
Q=f\prod_{\substack{p\mid N\\p\nmid f}}p.
\]
Consider
\[
\Phi_\chi(t)=\frac1{1+t}
+\sum_{r=0}^{\infty}(\alpha_r-1)(-t)^r
\qquad \text{ and set } \qquad 
I_\chi:=\Phi_\chi(1)
=\frac12+\sum_{r=0}^{\infty}(-1)^r(\alpha_r-1).
\]
The following theorem establishes convergence and describes
the arithmetic properties of this function.
\begin{theorem}\label{arithmeticlimit}
Assume the notation and hypotheses above. Then the power series
\[
\sum_{r=0}^{\infty}(\alpha_r-1)(-t)^r
\]
converges absolutely for $|t|<4$. Consequently,
$\Phi_\chi$ is meromorphic in the disk $|t|<4$, with a unique simple pole at $t=-1$ with residue $1$.
Moreover:
\begin{enumerate}[(i)]
\item The number $I_{\chi} = \Phi_{\chi}(1)$ and $\pi$ are algebraically independent, and hence, $I_{\chi}$ is transcendental. In particular, $I_{\chi} \neq 0$.

\item If $I_\chi>0$, then $\Phi_\chi$ has no zeros in
$|t|<1$. 

\item If $I_\chi<0$, then $\Phi_\chi$ has exactly one zero
in $|t|<1$, denoted by $t_\chi$. This zero is simple
and lies in $(0,1)$. Moreover,
$\exp(2\pi\sqrt{t_\chi}/Q)$ is algebraic, so
$t_\chi$ is transcendental by the Gelfond--Schneider
theorem.
\end{enumerate}
\end{theorem}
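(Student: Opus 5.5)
The plan is to turn $\Phi_\chi$ into an explicit closed form built from hyperbolic functions of $\sqrt t$. All of (i)--(iii) are then read off from that closed form.

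\textbf{Step 1 (convergence and the partial-fraction form).} The Euler factors removed in $\alpha_r$ mean that $\alpha_r=L(2r+\e,\chi_Q)$, where $\chi_Q$ is the real character modulo $Q$ induced by $\chi^*$. Hence
\[
\alpha_r-1=\sum_{n\ge2}\chi_Q(n)n^{-2r-\e}.
\]
For $r\ge1$ this is $O(2^{-2r})$, and the single term $r=0$ is harmless. This gives absolute convergence for $|t|<4$. Next I would interchange summations, first for small $|t|$ and for $\e=2$, where everything is absolutely convergent. For $\e=1$ I would pair the terms of $L(1,\chi_Q)$ over complete residue classes modulo $Q$ to justify the interchange. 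This yields
\[
\Phi_\chi(t)=\sum_{n\ge1}\chi_Q(n)\frac{n^{2-\e}}{n^2+t}.
\]
Now split $n$ into residue classes $a \bmod Q$ and use $\sum_{m\in\mathbb Z}(m+z)^{-1}=\pi\cot\pi z$ with $z=(a\pm i\sqrt t)/Q$. The parity of $\chi$ matches the parity of $n^{2-\e}/(n^2+t)$, so the sum over $n\ge1$ is half the sum over $n\in\mathbb Z$. The outcome is
\[
\Phi_\chi(t)=\frac{\pi}{Q\,t^{(\e-1)/2}}\,\mathcal R\bigl(e^{2\pi\sqrt t/Q}\bigr),
\]
where $\mathcal R$ is a rational function with coefficients in $\mathbb Q(\zeta_Q)$. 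The right-hand side is even in $\sqrt t$, so it is a genuine meromorphic function of $t$. This continues $\Phi_\chi$ and confirms that the only pole in $|t|<4$ is at $t=-1$ (coming from $n=1$), with residue $\chi_Q(1)=1$.

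\textbf{Step 2 (part (i)).} At $t=1$ we get $I_\chi=\frac{\pi}{Q}\mathcal R(e^{2\pi/Q})$. By Nesterenko's theorem, $\pi$ and $e^{\pi}$ are algebraically independent. Since $e^{2\pi/Q}$ is algebraic over $\mathbb Q(e^{\pi})$, the same holds for $\pi$ and $e^{2\pi/Q}$. So it suffices to show that $\mathcal R$ is nonconstant. This follows because $\Phi_\chi$, viewed as a function of $t$, has poles at $t=-n^2$ for every $n$ with $\chi_Q(n)\ne0$. A constant $\mathcal R$ would make $\Phi_\chi$ equal to $c\pi t^{-(\e-1)/2}$, which has no such poles. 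Then $\mathcal R(e^{2\pi/Q})$ is transcendental over $\mathbb Q(\pi)$. Therefore $I_\chi$ and $\pi$ are algebraically independent, and in particular $I_\chi\ne0$.

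\textbf{Step 3 (parts (ii), (iii)), the main obstacle.} I need to count zeros of $\Phi_\chi$ in $|t|<1$. The real-axis part is straightforward. On $(-1,1)$, $\Phi_\chi$ is real, and $\Phi_\chi(t)\to+\infty$ as $t\to-1^+$. Hence there is a zero in $(0,1)$ when $I_\chi<0$, by the intermediate value theorem applied together with $\Phi_\chi(0)=\alpha_0>0$. The hard part is showing there are no nonreal zeros, at most one zero in total, and simplicity. My first attempt would be a Rouché argument on the disk $|t|<1$. I would compare $(1+t)\Phi_\chi(t)=1+(1+t)h(t)$, with $h(t)=\sum_{r\ge0}(\alpha_r-1)(-t)^r$, against an affine model. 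The model would be the truncation $1+(1+t)\bigl[(\alpha_0-1)-(\alpha_1-1)t\bigr]$. The tail satisfies $\sum_{r\ge2}|\alpha_r-1|\le\sum_{r\ge2}(\zeta(2r+\e)-1)$, which is small. The model is a quadratic in $t$, and its roots can be located explicitly from $L(\e,\chi_Q)$ and $L(2+\e,\chi_Q)$. If the model's zero count in the disk matches the sign of its value at $t=1$, I would show the exact function has the same sign at $t=1$ as $I_\chi$; Rouché then transfers the count to $\Phi_\chi$. For $\e=1$, where $\alpha_0-1=L(1,\chi_Q)-1$ need not be small, I would keep the $r=0$ term exact in the model and use bounds for $L(1,\chi_Q)$. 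If the margins turn out too thin near $|t|=1$, I would instead work on the hyperbolic closed form of Step 1.

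\textbf{Step 4 (transcendence of $t_\chi$).} At $t=t_\chi$ we have $\mathcal R(w)=0$ with $w=e^{2\pi\sqrt{t_\chi}/Q}$. Since $\mathcal R$ is nonconstant with algebraic coefficients, $w$ is algebraic. Suppose $t_\chi$ were algebraic. Then $\beta=-2i\sqrt{t_\chi}/Q$ is algebraic and not real, hence irrational. By Gelfond--Schneider, $w=(-1)^{\beta}=e^{i\pi\beta}$ would be transcendental, a contradiction. So $t_\chi$ is transcendental.
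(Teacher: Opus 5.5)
\paragraph{Main gap: Step~3.}
Step~3 has to carry all of (ii) and the uniqueness and simplicity in (iii), and the proposed Rouch\'e comparison with a fixed truncation on $|t|=1$ cannot do this uniformly in $\chi$.

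At $t=1$ you have $(1+t)\Phi_\chi(t)=2I_\chi$. The discarded tail contributes $2\sum_{r\geq2}(-1)^r(\alpha_r-1)$ there, and its size is governed by $|\alpha_2-1|$, not by $I_\chi$. The number of zeros of $\Phi_\chi$ in $|t|<1$ is $0$ or $1$ according to the sign of $\Phi_\chi(1)=I_\chi$, and $t_\chi$ sits just below $1$ when $I_\chi$ is slightly negative. So whenever $|I_\chi|$ is smaller than that tail, nothing prevents your quadratic model from having the opposite sign at $t=1$, and hence the wrong zero count. In that case the Rouch\'e inequality necessarily fails on the circle. Raising the truncation degree with $\chi$ only moves the problem to locating zeros of a higher-degree model. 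Rescuing the approach would need an explicit lower bound for $|I_\chi|$, which is not available: part (i) gives only $I_\chi\neq0$. The fallback of ``working on the hyperbolic closed form'' is not an argument.

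\paragraph{The missing idea.}
You need an exact, sign-determined zero count for approximants, passed to the limit strictly inside the disk. The paper uses $H_d(y)=\sum_{r=0}^{d}\alpha_r\alpha_{d-r}y^r$.
\begin{itemize}
\item Propositions~\ref{evenzeros} and \ref{oddzeros} pin $d-2$ zeros of $H_d$ on $\Sone$ by the node-sign argument, uniformly in $\chi$.
\item Lemma~\ref{endpoints} decides the remaining reciprocal pair by the sign of $H_d(-1)$ (for $d$ even) or $H_d'(-1)$ (for $d$ odd). These equal $2I_\chi+o(1)$ and $d\,(I_\chi+o(1))$ respectively.
\item Hence, for large $d$, $H_d(-t)$ has no zero in $|t|<1$ if $I_\chi>0$, and exactly one if $I_\chi<0$.
\item Since $H_d(-t)\to\Phi_\chi(t)$ locally uniformly on $|t|<1$, Hurwitz's theorem gives (ii). Rouch\'e on small disks inside the unit disk, where no margin at $|t|=1$ is needed, works equally well.
\item Together with your intermediate-value argument this gives (iii), using only the qualitative fact $I_\chi\neq0$.
\end{itemize}

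\paragraph{A separate error in Step~1.}
Step~1 misidentifies $\alpha_r$. The removed Euler factors are $1-\chi^*(p)p^{1-s}$, not $1-\chi^*(p)p^{-s}$, so $\alpha_r\neq L(2r+\e,\chi_Q)$. Its Dirichlet coefficients are
\[
b(\nu)=\chi^*(\nu)\prod_{p\mid\nu,\,p\mid N,\,p\nmid f}(1-p),
\]
which can exceed $1$ in absolute value.

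Because $b$ is still $Q$-periodic with parity $(-1)^\e$, $b(1)=1$ and $b(Q)=0$, your Steps~1, 2 and 4 go through with $b$ in place of $\chi_Q$. They then match the paper's argument. The paper proves $\mathcal R_\chi$ nonconstant via its pole at $Z=e^{2\pi i/Q}$; your pole argument also works, since the pole of $\Phi_\chi$ at $t=-1$ already suffices.

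The error does affect Step~3, though. The bound $|\alpha_r-1|\leq\zeta(2r+\e)-1$ is false in general. Moreover, for even imprimitive $\chi$ (the only even case in the theorem), $\alpha_0=V_2(\chi)$ can be arbitrarily large or arbitrarily close to $0$. So the coefficients of your model are not controlled either.
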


The sign of $I_\chi$ gives the following complete
description of the eventual occurrence of exceptional
zeros. When they occur, their limits are determined by
the transcendental number $\rho_\chi:= t_{\chi}^{-1}$.
\begin{theorem}\label{eventualzeros}
Let $\chi$ be a real nonprincipal character modulo $N$.
Assume that $B_{1,\chi}\neq0$ if $\chi$ is odd, and that
$\chi$ is imprimitive if $\chi$ is even.
\begin{enumerate}[(i)]
\item If $I_\chi>0$, then for every sufficiently large $n$,
all nonzero zeros of $S_{2n}(x;\chi,\chi)$ lie on $\Sone$.

\item If $I_\chi<0$, then for every sufficiently large $n$,
the nonzero zeros of $S_{2n}(x;\chi,\chi)$ off $\Sone$ are precisely
the four simple zeros
\[
\pm i\sqrt{\rho_n},\qquad
\pm i/\sqrt{\rho_n},\qquad \rho_n>1.
\]
Moreover, $\rho_n\to\rho_\chi$, and for every fixed $R$ with 
$\max\{1/4,\rho_\chi^{-1}\}<R<1$,
\[
\rho_n-\rho_\chi=O_{\chi,R}(R^n).
\]
\end{enumerate}
\end{theorem}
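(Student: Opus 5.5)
The plan is to rewrite $S_{2n}(x;\chi,\chi)$, after the substitution $y=x^2$ (or $y=-x^2$, to match the exceptional zeros on the negative $y$-axis), as an explicit reciprocal polynomial. Its coefficients will be normalized products of $L$-values $\alpha_r\alpha_{n-1-r}$ (or shifted indices), up to a common factor. The exceptional zeros then correspond to a reciprocal pair of real negative roots $-\rho_n,-1/\rho_n$ in $y$.

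Concretely, I use the relation between $B_{j,\chi}$ and $L(j,\chi^*)$ times the Euler factors at $p\mid N$, $p\nmid f$, which is exactly why $\alpha_r$ and $Q$ appear. This expresses the polynomial, up to a nonzero constant, as
\[
P_n(t)=\sum_{r}\alpha_r\alpha_{n'-r}\,(-t)^r\,c^{\,r}
\]
for a suitable $c$ coming from $(2\pi/Q)^2$. For odd $\chi$ the $B_{1,\chi}$ terms give the extremal coefficients. After dividing by them, the polynomial takes the form $1+t^{n'}+\sum(\ldots)$. Using $\alpha_r=1+O(4^{-r})$ (the convergence statement of Theorem \ref{arithmeticlimit}), I split $P_n(-t)$ into $\sum_r(-t)^r+(-t)^{n'}\sum_r(-1/t)^r$ plus error terms involving $(\alpha_r-1)$. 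Both ends contribute a copy of $\Phi_\chi$-type series, giving
\[
P_n(t)\approx \Phi_\chi(t)+(\pm t)^{n'}\Phi_\chi(1/t).
\]

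For $|t|\le R<1$, the second term is $O(R^{n})$ uniformly. The first term is a fixed meromorphic function whose only pole is at $-1$, outside this disc.

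The root count in $|t|<1$ then comes from Hurwitz/Rouché on $|t|=R$. The Chebyshev sign argument behind Theorem \ref{intro:main-S-2n} already shows there are at most this reciprocal pair off the circle. It remains only to decide whether a root lies in $(0,1)$.

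\begin{enumerate}[(i)]
\item If $I_\chi>0$: by Theorem \ref{arithmeticlimit}(ii), $\Phi_\chi$ has no zeros in $|t|<1$. I need more than Rouché here, because roots could approach $|t|=1$ as $n$ grows. Instead I use the endpoint sign criterion from the proof of Theorem \ref{intro:main-S-2n}: the existence of the exceptional pair is determined by the sign of the value at $t=1$ (equivalently $y=-1$), or by the derivative there. At $t=1$ the two halves are equal, and the value is about $2\Phi_\chi(1)=2I_\chi$ plus $o(1)$. A positive sign gives no exceptional zeros for large $n$.
\item If $I_\chi<0$: the same endpoint value is negative, so the exceptional quartet exists. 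Rouché on a small disc around the simple zero $t_\chi$ of $\Phi_\chi$ (Theorem \ref{arithmeticlimit}(iii)) gives a unique zero $t_n$ with $|t_n-t_\chi|=O(R^n)$. Then $\rho_n=1/t_n$, and the $O(R^n)$ bound transfers to $\rho_n-\rho_\chi$. The requirement $R>\rho_\chi^{-1}=t_\chi$ is needed so that the disc contains $t_\chi$, and $R>1/4$ is needed so that the tails $\alpha_r-1=O(4^{-r})$ are dominated.
\end{enumerate}

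Simplicity follows from the simplicity of $t_\chi$ and the uniqueness from Rouché.

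The main obstacle is the bookkeeping in the first step. One has to get the normalization exactly right so that the approximating function is precisely $\Phi_\chi$, including the $\frac1{1+t}$ term, which is the geometric series $\sum(-t)^r$. One also has to get the sign of the endpoint value exactly right. In particular, for even imprimitive $\chi$ the extremal coefficient involves $B_{0}$-type vanishing, which shifts the indexing with $\varepsilon=2$. I would first work out the odd case with $\varepsilon=1$ explicitly, and then redo the even case by analogy.
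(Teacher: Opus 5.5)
Your proposal is correct and follows the paper's proof. You reduce to the reciprocal polynomial $H_d(y)=\sum_{r=0}^{d}\alpha_r\alpha_{d-r}y^r$ with $d=n-\e$. You combine the node-sign propositions with Lemma~\ref{endpoints} and the endpoint asymptotics $H_d(-1)=2I_\chi+o(1)$ for even $d$ and $H_d'(-1)/d=I_\chi+o(1)$ for odd $d$ (where the two halves cancel at $t=1$), which decide whether the exceptional pair exists. You then use $\sup_{|t|\le R}|H_d(-t)-\Phi_\chi(t)|=O(R^d)$ with a Rouch\'e/simple-zero argument at $t_\chi$ for the limit and the rate.

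Two small slips. First, there is no factor $c^{\,r}$: since $\psi=\chi$ and $L=M$, the powers of $Q/2\pi$ combine into a common constant. A positive $c\neq1$ would break reciprocity and change the limit function to $\Phi_\chi(ct)$. Second, $\Phi_\chi(1/t)$ is undefined for $|t|\le 1/4$. On $|t|\le R$ you should therefore bound the finite sum $\sum_{r\le d}(\alpha_{d-r}-1)(-t)^r$ and the geometric tail directly, which is what the paper does.
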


When $\chi$ is a primitive odd character, $\Phi_\chi$ can be expressed directly through a Fekete polynomial. In particular, the sign of $I_\chi$ is determined by a value of a finite character sum. We establish this in Proposition \ref{Feketerelation}.

\section{\bf Preliminaries}
\medskip

\subsection{Values of \texorpdfstring{$L$}{L}-functions of imprimitive characters}
Let $\xi$ be a nonprincipal real Dirichlet character mod $N$, induced from a primitive character $\xi^*$ of conductor $f$. Set
\[
 T_\xi=\{p:p\mid N,\ p\nmid f\},\qquad Q_\xi=f\prod_{p\in T_\xi}p,
\]
and, for positive integers $m$, define
\begin{equation}\label{Vdef}
 V_m(\xi)=L(m,\xi^*)\prod_{p\in T_\xi}\left( 1- \frac{\xi^*(p)}{p^{m-1}} \right).
\end{equation}
For $m>1$ this is positive. For odd $\xi$, $V_1\geq0$ and it is zero exactly when some $p\in T_\xi$ satisfies $\xi^*(p)=1$. We use the standard positivity $L(1,\xi^*)>0$ for real nonprincipal primitive characters.

\begin{lemma}\label{berL}
For positive integers $m$ with $\xi(-1) = (-1)^m$,
\begin{equation}\label{Bernabs}
 \frac{|B_{m,\xi}|}{m!}
 =\frac{2}{\sqrt f\prod_{p\in T_\xi}p}
 \left(\frac{Q_\xi}{2\pi}\right)^m V_m(\xi).
\end{equation}
For $m$ with $\xi(-1) = (-1)^m$, the numbers $B_{m,\xi}$ and $B_{m+2,\xi}$ have opposite signs whenever both are nonzero. In particular, for fixed $m$, all nonzero products $B_{j,\xi} \,B_{m-j,\xi}$, with $1\leq j<m$, have the same sign. More generally, for fixed real nonprincipal characters $\xi,\eta$ and fixed $k$, all nonzero products $B_{s,\xi}B_{k-s,\eta}$, with $1\leq s<k$, have the same sign.
\end{lemma}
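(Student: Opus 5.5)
The plan is to reduce everything to the primitive case via the standard relation between generalized Bernoulli numbers of imprimitive and primitive characters, and then feed in the functional-equation formula for $L(m,\xi^*)$ recalled in the introduction.

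\textbf{Step 1 (imprimitive to primitive).} Comparing generating functions (equivalently, $L(1-m,\xi)$ with $L(1-m,\xi^*)$ via Euler factors), one has
\[
 B_{m,\xi}=B_{m,\xi^*}\prod_{p\in T_\xi}\bigl(1-\xi^*(p)p^{m-1}\bigr).
\]
This can be checked from $L(1-m,\xi)=-B_{m,\xi}/m$ together with $L(s,\xi)=L(s,\xi^*)\prod_{p\in T_\xi}(1-\xi^*(p)p^{-s})$.

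\textbf{Step 2 (primitive values).} For $\xi^*$ real primitive with $\xi^*(-1)=(-1)^m$, the formula $L(m,\eta)=(-1)^{m-1}\frac{G(\eta)}{2}\bigl(\frac{2\pi i}{f}\bigr)^m\frac{B_{m,\overline\eta}}{m!}$ applies. Using $|G(\xi^*)|=\sqrt f$, it gives
\[
 \frac{|B_{m,\xi^*}|}{m!}=\frac{2}{\sqrt f}\Bigl(\frac{f}{2\pi}\Bigr)^m L(m,\xi^*).
\]
Here $L(m,\xi^*)>0$: for $m>1$ from the Euler product, and for $m=1$ by the stated positivity.

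Next rewrite the Euler factors as
\[
 \bigl|1-\xi^*(p)p^{m-1}\bigr|=p^{m-1}\bigl|1-\xi^*(p)p^{1-m}\bigr|.
\]
Each factor $1-\xi^*(p)p^{1-m}$ is $\ge0$, since $\xi^*(p)\in\{0,\pm1\}$ and $p^{1-m}\le1$. Multiplying through, the product of $p^{m-1}$ over $T_\xi$ combines with $f^m$ to give
\[
 Q_\xi^m\Big/\prod_{p\in T_\xi}p .
\]
This yields \eqref{Bernabs}, which is purely bookkeeping once Step 1 is in hand.

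\textbf{Step 3 (signs).} For the sign alternation, I would track the phase in Step 2. For real $\xi^*$, $G(\xi^*)=\sqrt f$ if $\xi^*$ is even and $i\sqrt f$ if odd. The formula then gives
\[
 \operatorname{sgn}B_{m,\xi^*}=\operatorname{sgn}\Bigl((-1)^{m-1}\,i^{-m}\,G(\xi^*)^{-1}\Bigr)\quad(\text{a real quantity}).
\]
Passing from $m$ to $m+2$ multiplies this by $i^{-2}=-1$, so the signs of $B_{m,\xi^*}$ alternate.

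It remains to see that the imprimitive factor $\prod(1-\xi^*(p)p^{m-1})$ has constant sign in $m$ whenever nonzero. If $\xi^*(p)=-1$ the factor is positive. If $\xi^*(p)=0$ it equals $1$, although that case cannot occur because $p\nmid f$. If $\xi^*(p)=1$ the factor is $1-p^{m-1}$, which is negative for $m\ge2$ and zero for $m=1$. So for $m\ge1$ of fixed parity, whenever $B_{m,\xi}$ and $B_{m+2,\xi}$ are both nonzero, their signs agree with the alternating primitive signs. The only subtlety is $m=1$, where $B_{1,\xi}$ may vanish; that case is excluded by ``both nonzero''.

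\textbf{Step 4 (products).} The sign of $B_{j,\xi}$ is $c_\xi(-1)^{\lfloor j/2\rfloor}$ up to a parity-dependent constant. Write $j=2a+\delta$ with $\delta\in\{0,1\}$ fixed by the parity of $\xi$, so the sign is $c\,(-1)^a$. Then
\[
 \operatorname{sgn}\bigl(B_{s,\xi}B_{k-s,\eta}\bigr)=c\,c'\,(-1)^{a+b},\qquad a+b=\frac{k-\delta-\delta'}{2},
\]
which is fixed. The case $\xi=\eta$, $m=k$ is the special case.

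The main obstacle is getting Step 1 exactly right, including the direction of the Euler factor and the $m=1$ case, which one must verify for the generalized Bernoulli definition used here.
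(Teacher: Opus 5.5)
Your proposal is correct and follows the paper's proof. Both reduce to the primitive case via $B_{m,\xi}=B_{m,\xi^*}\prod_{p\in T_\xi}(1-\xi^*(p)p^{m-1})$ and apply the special-value formula for $L(m,\xi^*)$; the sign alternation then comes from the factor $(2\pi i)^{-m}$ together with the constant sign of the Euler-factor product. Your sign bookkeeping, including the $m=1$ case, just spells out what the paper leaves as `the claims follow.' You only need $|G(\xi^*)|=\sqrt f$ and that $G(\xi^*)$ does not depend on $m$ (the paper uses $G(\xi^*)^2=(-1)^m f$), not Gauss's determination of its sign.
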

\begin{proof}
By the special-value formula \cite[equation (23)]{BS}, we have
\[
 B_{m,\xi^*}
 =\frac{2(-1)^{m-1}m!f^m}
        {(2\pi i)^m \, G(\xi^*)}L(m,\xi^*).
\]
Here $G(\xi^*)$ is the associated Gauss sum. Since $\xi^*$ is real and $\xi^*(-1)=(-1)^m$,
the identity $G(\xi^*)^2=(-1)^m f$ gives
\[
\frac{B_{m,\xi^*}}{m!}
 =-\frac{2 \, f^{m-1}G(\xi^*)}
         {(2\pi i)^m}  \,L(m,\xi^*).
\]
When $\xi$ is induced from $\xi^*$, we have
\[
 B_{m,\xi}=B_{m,\xi^*}\prod_{p\in T_\xi}(1-\xi^*(p)p^{m-1}).
\]
The claims follow by putting together the above two expressions.
\end{proof}

\begin{lemma}\label{ratios}
Let $\xi$ be as before and $v>u\geq3$. Then
\begin{equation}\label{Vratio}
 \frac{V_v(\xi)}{V_u(\xi)}\leq\frac{\zeta(u-1)}{\zeta(u)}
 \leq A:=\frac{\zeta(2)}{\zeta(3)}.
\end{equation}
If $r\geq1$, $D\geq2r+2$, and $\xi(-1) = (-1)^{D-2r}$, then, except when $f=3$ and $T_\xi=\varnothing$,
\begin{equation}\label{scaledratio}
 Q_\xi^{-2r}\frac{V_{D-2r}(\xi)}{V_D(\xi)}
 \leq \frac{\zeta(2)\zeta(4)}{16^r}.
\end{equation}
If $f=3$ and $T_{\xi}={\varnothing}$, then the left side is at most $9^{-r}$.
\end{lemma}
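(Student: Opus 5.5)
The plan is to write the ratio as an Euler-product quotient. First factor out the $L$-values from the local correction factors. For $m>1$,
\[
L(m,\xi^*)=\prod_p\bigl(1-\xi^*(p)p^{-m}\bigr)^{-1},
\]
so
\[
\frac{V_v(\xi)}{V_u(\xi)}=\prod_{p\notin T_\xi}\frac{1-\xi^*(p)p^{-u}}{1-\xi^*(p)p^{-v}}\;\prod_{p\in T_\xi}\frac{1-\xi^*(p)p^{1-v}}{1-\xi^*(p)p^{1-u}}.
\]
Each Euler factor with $\xi^*(p)=\pm1$ is handled by monotonicity in the exponent.

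\textbf{First inequality.} For $p\notin T_\xi$, the factor is at most $(1+p^{-u})/(1-p^{-v})$ when $\xi^*(p)=-1$. It is at most $1$ when $\xi^*(p)=1$ and $v>u$, and it equals $1$ when $\xi^*(p)=0$.

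For $p\in T_\xi$, the factor is at most $(1+p^{1-v})/(1+p^{1-u})\le 1$ when $\xi^*(p)=-1$. When $\xi^*(p)=1$, it is $(1-p^{1-v})/(1-p^{1-u})\le (1-p^{1-u})^{-1}$.

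So we bound crudely by
\[
\prod_p\frac{1+p^{-u}}{1-p^{-v}}\cdot\prod_p(1-p^{1-u})^{-1}.
\]
That crude product is not quite of the stated form, so I would instead bound every factor uniformly by $(1-p^{1-u})^{-1}(1-p^{-u})$. For $\xi^*(p)=-1$ and $p\notin T$, this comparison reads
\[
\frac{1+p^{-u}}{1-p^{-v}}\le\frac{1-p^{-u}}{1-p^{1-u}}.
\]
Cross-multiplying reduces it to an elementary inequality in $x=p^{-1}\le1/2$ with $u\ge3$. I would check it using $v\ge u+1$, which is the case if parities must match and $v\ge u+2$; this step needs care but is routine. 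Taking the product over all $p$ gives $\zeta(u-1)/\zeta(u)$. Finally, $\zeta(u-1)/\zeta(u)$ is decreasing in $u$, since $\prod_p (1-p^{-u})/(1-p^{1-u})$ has factors decreasing in $u$. Hence it is at most $A=\zeta(2)/\zeta(3)$.

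\textbf{Scaled inequality.} Set $m=D-2r\ge2$ and write
\[
Q_\xi^{-2r}\frac{V_m}{V_{m+2r}}=f^{-2r}\,\frac{L(m,\xi^*)}{L(m+2r,\xi^*)}\prod_{p\in T}p^{-2r}\,\frac{1-\xi^*(p)p^{1-m}}{1-\xi^*(p)p^{1-m-2r}}.
\]
For the $L$-ratio, use
\[
L(m,\xi^*)\le\frac{\zeta(m)}{\zeta(2m)}\le\frac{\zeta(2)}{\zeta(4)}\quad\text{and}\quad L(m+2r,\xi^*)\ge\frac{\zeta(2m+4r)}{\zeta(m+2r)}\ge\frac{1}{\zeta(4)}\ \ (m+2r\ge4),
\]
which gives a bound $\zeta(2)$. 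I would refine this to $\zeta(2)\zeta(4)$ if needed; the target constant $\zeta(2)\zeta(4)$ is generous.

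Each $p\in T$ contributes at most $p^{-2r}(1+p^{1-m})/(1-p^{-1})\le p^{-2r}\cdot 2(1+1/p)\le 1$ for $r\ge1$, since $p\ge2$ and $m\ge2$ (when $m=2$ this bound is $3p^{-2r}\le 3/4$). So the $T$-product is at most $1$, and it is small whenever $T\ne\varnothing$.

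The main obstacle is the power $16^{-r}$. If $f\ge4$, then $f^{-2r}\le16^{-r}$ and we are done. If $f=3$ with $T\ne\varnothing$, the factor $3^{-2r}$ times a $T$-factor $p^{-2r}(\ldots)$ with $p\ge2$ gives roughly $36^{-r}$ times a constant, which must be checked against $\zeta(2)\zeta(4)16^{-r}$. If $f=3$ and $T=\varnothing$, we get $9^{-r}$ times the $L$-ratio. Here I would use the explicit Euler factor at $3$, $\xi^*(2)=-1$, and direct bounds on $L(m,\chi_{-3})$ and $L(m,\chi_{12})$-type values to show the ratio is at most $1$. For $f=3$ the characters are odd only, and $L(m,\chi_{-3})$ is increasing-ish toward $1$, which I would verify by comparing Euler factors at each prime.

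The small-conductor case $f=3$ (with small $p\in T$ such as $p=2$) is where the constants are tight, and I expect explicit numerical checking there.
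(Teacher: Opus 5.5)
Your overall route is the paper's: compare each Euler factor with $(1-p^{-u})/(1-p^{1-u})$; use $f^{-2r}\le16^{-r}$ for $f\ge4$; use an extra omitted prime when $f=3$, $T_\xi\ne\varnothing$; and use monotonicity of $L(s,\chi_{-3})$ otherwise. However, several steps fail as written.

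Your Euler product for $V_v/V_u$ drops the $L$-factors at $p\in T_\xi$. These are nontrivial since $p\nmid f$, and the correct local factor there is
\[
\frac{1-\xi^*(p)p^{-u}}{1-\xi^*(p)p^{-v}}\cdot\frac{1-\xi^*(p)p^{1-v}}{1-\xi^*(p)p^{1-u}}.
\]
When $\xi^*(p)=1$, it is this combined factor that is at most $(1-p^{-u})/(1-p^{1-u})$, because $(1-p^{1-v})/(1-p^{-v})\le1$. Your truncated factor $(1-p^{1-v})/(1-p^{1-u})$ exceeds that bound once $v\ge u+2$. More seriously, the comparison you call routine,
\[
\frac{1+p^{-u}}{1-p^{-v}}\le\frac{1-p^{-u}}{1-p^{1-u}},
\]
is false at $p=2$ whenever $u+1\le v\le 2u-2$; for example, $(u,v)=(4,6)$ gives $68/63>15/14$. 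The loss comes from replacing $1+p^{-v}$ by $1-p^{-v}$. Keep the exact factor and use $(1+p^{-u})/(1+p^{-v})\le1+p^{-u}\le(1-p^{-u})/(1-p^{1-u})$; the last step is equivalent to $(p-2)p^{-u}+p^{1-2u}\ge0$.

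In the scaled bound, $L(m,\xi^*)\le\zeta(m)/\zeta(2m)$ is wrong. In fact $\zeta(2m)/\zeta(m)$ is the lower bound, and $L(2,\xi^*)>\zeta(2)/\zeta(4)$ when $\xi^*(p)=1$ for all $p\le7$. The constant $\zeta(2)\zeta(4)$ comes instead from $L(s,\xi^*)\le\zeta(2)$ and $1/L(D,\xi^*)\le\zeta(D)/\zeta(2D)\le\zeta(4)$.

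Your estimates also do not close the case $f=3$, $T_\xi\ne\varnothing$. With $2\in T_\xi$ and $r=1$, your local bound $2p^{-2r}(1+1/p)=3/4$ combined with $f^{-2r}=1/9$ yields only $\zeta(2)\zeta(4)/12$, not $\zeta(2)\zeta(4)/16$. The problem is that your bound mixes the numerator estimate from the case $\xi^*(p)=-1$ with the denominator estimate from the case $\xi^*(p)=1$. Split by sign instead. For $\xi^*(p)=1$ the scaled factor is at most $p^{-2r}$. For $\xi^*(p)=-1$ it is at most $p^{-2r}(1+p^{1-s})$, and since $\xi$ is odd, $s=D-2r\ge3$. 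This gives $(5/4)4^{-r}$, and together with $f^{-2r}=9^{-r}$ a total of $(5/4)36^{-r}\le16^{-r}$.

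Finally, for $f=3$, $T_\xi=\varnothing$, you cannot verify $L(s,\chi_{-3})\le L(D,\chi_{-3})$ prime by prime, because the factors at $p\equiv1\pmod 3$ decrease in $s$. You need a global estimate in which the contribution of $2$ dominates, such as
\[
L(s+2,\chi_{-3})-L(s,\chi_{-3})\ge3\cdot2^{-s-2}-\sum_{n\ge4}n^{-s}>0\qquad(s\ge3\text{ odd}),
\]
which is what the paper does.
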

\begin{proof}
For \(v\ge u>1\), we compare each Euler factor at \(v\) with its value at \(u\) considering the possibilities $\xi^*(p) = \pm 1$. The factors
\[
(1-p^{-s})^{-1},
\qquad
\frac{1+p^{1-s}}{1+p^{-s}}
\]
decrease with \(s\). Thus, their ratios are at most \(1\). For the $L$-value factor with $\xi^*(p) = -1$,
\[
\frac{1+p^{-u}}{1+p^{-v}}
\le 1+p^{-u}
\le \frac{1-p^{-u}}{1-p^{1-u}},
\]
where the last inequality follows upon clearing denominators from
\((p-2)p^{-u}+p^{1-2u}\ge0\). For the correction factor with $\xi^*(p) = 1$,
\[
\frac{(1-p^{1-v})/(1-p^{-v})}
     {(1-p^{1-u})/(1-p^{-u})}
\le \frac{1-p^{-u}}{1-p^{1-u}},
\]
because its numerator is at most \(1\). Thus every ratio of Euler factors satisfies the same bound. Multiplying these bounds over all primes gives, for \(u>2\),
\[
\frac{V_v(\xi)}{V_u(\xi)}
\le\prod_p\frac{1-p^{-u}}{1-p^{1-u}}
=\frac{\zeta(u-1)}{\zeta(u)}.
\]
Since this bound is a decreasing function of $u$, we derive the second inequality.\\

\noindent For \eqref{scaledratio}, set $s = D-2r$ and write
\begin{equation}\label{scaledlocal}
 Q_\xi^{-2r}\frac{V_s(\xi)}{V_D(\xi)}
 =f^{-2r}\frac{L(s,\xi^*)}{L(D,\xi^*)}
 \prod_{p\in T_\xi}p^{-2r}
 \left( \frac{1-\xi^*(p) \, p^{1-s}}{1-\xi^*(p) \, p^{1-D}} \right).
\end{equation}
Each {scaled} Euler factor in this product is at most $1$. Also
\[
 L(s,\xi^*)\leq\zeta(s)\leq\zeta(2),\qquad
 \frac1{L(D,\xi^*)}\leq\frac{\zeta(D)}{\zeta(2D)}\leq\zeta(4),
\]
since $D\geq4$. Thus we obtain the bound for $f\geq4$. If $f=3$, then $\xi^*$ is odd and $s\geq3$. If $T_\xi\ne\varnothing$, choose one omitted prime $p$. Its scaled factor is at most
$(1+p^{1-s})p^{-2r}\leq(5/4)4^{-r}$. Hence the conductor and this factor together give
$(5/4)36^{-r}\leq16^{-r}$; all remaining scaled factors are at most $1$. If $f=3$ and $T_\xi=\varnothing$, the relevant primitive $L$-values increase along odd integers $s\geq3$. Indeed,
\[
 L(s+2,\xi^*)-L(s,\xi^*)
 \geq \frac{3}{2^{s+2}}-\sum_{n\geq4}\frac{1}{n^{s}}>0,
\]
because $\sum_{n\geq4} \frac{1}{n^{s}}<\frac{1}{4^{s}}+\frac{4^{1-s}}{(s-1)}\leq \frac{3}{4^{s}}\leq \frac{3}{2^{s+2}}$. This yields the bound $9^{-r}$.
\end{proof}

\subsection{\bf A unimodularity criterion}
Let $A$, $a_r \in \mathbb{R}$ for $1 \leq r \leq d-1$, for an integer $d \geq 2$. Suppose that $a_r = a_{d-r}$ for all $1 \leq r \leq d-1$.  Consider 
\begin{equation}\label{Hform}
 H(y)=A(1+y^d)+\sum_{r=1}^{d-1}a_ry^r.
\end{equation}
Note that we allow $A=0$. Set $F(\theta)=e^{-id\theta/2} \, H(e^{i\theta})$ and define
\[
 \theta_j=\frac{(2j-1)\pi}{d} \qquad \text{and} \qquad \sigma_j=(-1)^{j-1},\qquad 1\leq j\leq m=\lfloor d/2\rfloor.
\]
Thus, we have
\[
\cos(d\theta_j/2)=0,
\qquad
\sin(d\theta_j/2)=\sigma_j.
\]
Pairing symmetric terms and evaluating at $\theta = \theta_j$ gives
\begin{equation}\label{nodeS}
 \sigma_j \,F(\theta_j)=
 \begin{cases}
 \sigma_j \, a_m+2\sum_{r=1}^{m-1}a_r \, \sin(r\theta_j),&d=2m,\\
 2\sum_{r=1}^{m}a_r \, \sin(r\theta_j),&d=2m+1.
 \end{cases}
\end{equation}
In particular, the outer coefficient $A$ disappears in the above specialization. 

\begin{lemma}\label{angular}
For a given $H$ as in \eqref{Hform}, if the expression on the right of \eqref{nodeS} is positive for every $1\leq j\leq m$, then $H$ has at least $d-2$ distinct zeros on $\Sone$. More specifically, if
\begin{align}
 4\sum_{r=1}^{m-1}r|a_r-1|+\cot\left(\frac\pi{4m}\right)|a_m-1|&<1 &&(d=2m),\label{eventest}\\
 4\sum_{r=1}^{m}r|a_r-1|&<1 &&(d=2m+1),\label{oddtest}
\end{align}
then $H$ has at least $d-2$ distinct zeros on $\Sone$.
\end{lemma}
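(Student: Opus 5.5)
The plan is to show that $F$ is a real, even-about-$\pi$ trigonometric polynomial, count its sign changes at the nodes $\theta_j$, and then compare the general case with the all-ones case.

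\textbf{Reduction to a real function.} Since the coefficients are real and palindromic, pairing $r$ with $d-r$ gives
\[
F(\theta)=2A\cos(d\theta/2)+\sum_{r<d/2}2a_r\cos\bigl((d/2-r)\theta\bigr)\;(+\,a_m \text{ if } d=2m).
\]
So $F$ is real-valued and continuous, and satisfies $F(2\pi-\theta)=F(\theta)$. Its zeros in $[0,2\pi)$ correspond bijectively to zeros $y=e^{i\theta}$ of $H$ on $\Sone$.

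\textbf{Counting sign changes.} Assume the right side of \eqref{nodeS} is positive for all $j$, that is, $\sigma_jF(\theta_j)>0$. The nodes satisfy $0<\theta_1<\dots<\theta_m<\pi$, and the signs of $F$ there alternate. The intermediate value theorem therefore gives $m-1$ zeros in $(\theta_1,\theta_m)$. By the symmetry $F(2\pi-\theta)=F(\theta)$, it gives another $m-1$ zeros in $(2\pi-\theta_m,2\pi-\theta_1)$. These two sets are disjoint.
\begin{itemize}
\item For $d=2m$ this already yields $2m-2=d-2$ distinct zeros.
\item For $d=2m+1$, every frequency $d/2-r$ is a half-integer, so $F(\pi)=0$; equivalently $H(-1)=0$. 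The point $\pi$ lies strictly between $\theta_m$ and $2\pi-\theta_m$, so we get $2m-1=d-2$ distinct zeros.
\end{itemize}

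\textbf{The quantitative criterion.} Write $a_r=1+(a_r-1)$. Since $A$ does not appear in \eqref{nodeS}, the all-ones contribution can be computed from $H_0(y)=(y^{d+1}-1)/(y-1)$. This gives $F_0(\theta)=\sin((d+1)\theta/2)/\sin(\theta/2)$. At the nodes, $\sin((d+1)\theta_j/2)=\sigma_j\cos(\theta_j/2)$, so the main term equals $\cot(\theta_j/2)>0$.

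It therefore suffices that, for each $j$,
\[
2\sum_r|a_r-1|\,|\sin(r\theta_j)|+|a_m-1|<\cot(\theta_j/2),
\]
where the middle term is present only when $d=2m$. We bound each piece against the main term.
\begin{itemize}
\item Using $|\sin r\theta|\le r|\sin\theta|=2r\sin(\theta/2)\cos(\theta/2)$, the ratio of each sine term to $\cot(\theta_j/2)$ is at most $4r\sin^2(\theta_j/2)\le 4r$.
\item For the middle term, $\tan(\theta_j/2)\le\tan(\theta_m/2)$. When $d=2m$ we have $\theta_m/2=\pi/2-\pi/(4m)$, so this equals $\cot(\pi/(4m))$.
\end{itemize}
Dividing through by $\cot(\theta_j/2)$, conditions \eqref{eventest} and \eqref{oddtest} imply the required positivity. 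The first part then applies.

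The main obstacle is spotting the closed form of the all-ones case. Once the main term is identified as $\cot(\theta_j/2)$, both the weight $4r$ and the constant $\cot(\pi/(4m))$ fall out directly.
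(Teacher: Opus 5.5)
Your proof is correct and is essentially the paper's argument. Sign alternation at the nodes together with conjugate symmetry (plus $H(-1)=0$ for odd $d$) gives the $d-2$ zeros. The main term $\sigma_j F_0(\theta_j)=\cot(\theta_j/2)$, which you obtain from the all-ones polynomial $(y^{d+1}-1)/(y-1)$ where the paper uses the explicit identity for $2\sum_{r=1}^{q}\sin(r\theta)$, then dominates the errors via $4r\sin^2(\theta_j/2)\le 4r$ and $\tan(\theta_m/2)=\cot(\pi/(4m))$.

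One small slip: for odd $d$ all frequencies are half-integers, so in fact $F(2\pi-\theta)=-F(\theta)$ rather than $F(2\pi-\theta)=F(\theta)$; this antisymmetry is exactly why $F(\pi)=0$. The zero set is still symmetric under $\theta\mapsto 2\pi-\theta$, so your count is unaffected.
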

\begin{proof}
The hypothesis implies that $F(\theta_j)$ alternate in sign on the upper semicircle of the unit circle. Thus, by the intermediate value theorem, we obtain $m-1$ zeros of $H$ there, and their $m-1$ conjugates on the lower semicircle. Further if $d$ is odd, then $H(-1)=0$, giving a total of $d-2$ zeros in either case. \\

The identity
\[
2\sum_{r=1}^{q}\sin(r\theta)
=
\frac{\cos(\theta/2)-\cos((q+\tfrac12)\theta)}
     {\sin(\theta/2)}
\]
gives, at $\theta_j=(2j-1)\pi/d$,
\[
\cot(\theta_j/2)=
\begin{cases}
\displaystyle
\sigma_j+2\sum_{r=1}^{m-1}\sin(r\theta_j), & d=2m,\\[6pt]
\displaystyle
2\sum_{r=1}^{m}\sin(r\theta_j), & d=2m+1.
\end{cases}
\]
Subtracting this identity from \eqref{nodeS}, we obtain
\[
\sigma_jF(\theta_j)-\cot(\theta_j/2)=
\begin{cases}
\displaystyle
\sigma_j(a_m-1)
+2\sum_{r=1}^{m-1}(a_r-1)\sin(r\theta_j), & d=2m,\\[6pt]
\displaystyle
2\sum_{r=1}^{m}(a_r-1)\sin(r\theta_j), & d=2m+1.
\end{cases}
\]
Since $0<\theta_j<\pi$, we may divide by
$\cot(\theta_j/2)>0$. The inequalities
\[
|\sin(r\theta_j)|\leq r\sin\theta_j,
\qquad
2\sin\theta_j\tan(\theta_j/2)
=4\sin^2(\theta_j/2)\leq4
\]
then give
\[
\frac{2|a_r-1|\,|\sin(r\theta_j)|}{\cot(\theta_j/2)}
\leq4r|a_r-1|.
\]
For $d=2m$, the central term additionally satisfies
\[
\frac{|a_m-1|}{\cot(\theta_j/2)}
\leq
\cot\left(\frac{\pi}{4m}\right)|a_m-1|,
\]
because $\theta_j\leq\pi-\pi/(2m)$. Consequently,
\[
\left|
\frac{\sigma_jF(\theta_j)}{\cot(\theta_j/2)}-1
\right|
\leq
\begin{cases}
\displaystyle
4\sum_{r=1}^{m-1}r|a_r-1|
+\cot\left(\frac{\pi}{4m}\right)|a_m-1|, & d=2m,\\[6pt]
\displaystyle
4\sum_{r=1}^{m}r|a_r-1|, & d=2m+1.
\end{cases}
\]
Thus, if the corresponding bound is strictly less than $1$,
then $\sigma_jF(\theta_j)>0$ at every node.
\end{proof}

We now derive conditions which determine the location of the remaining two zeros of $H(y)$.
\begin{lemma}\label{endpoints}
Let
\[
H(y)=A(1+y^d)+\sum_{r=1}^{d-1}a_ry^r,
\qquad A>0,\quad a_r=a_{d-r}>0,
\]
and suppose that $\sigma_jF(\theta_j)>0$ at $\theta = \theta_j$ as defined in
Lemma~\ref{angular}. Write $d=2m+\epsilon$,
$\epsilon\in\{0,1\}$, and set
\[
E=
\begin{cases}
H(-1),&\epsilon=0,\\
H'(-1),&\epsilon=1.
\end{cases}
\]
Then:
\begin{enumerate}[(i)]
\item If $E>0$, all zeros of $H$ lie on $\Sone$ and
$\operatorname{ord}_{-1}H=\epsilon$.
\item If $E=0$, all zeros of $H$ lie on $\Sone$ and
$\operatorname{ord}_{-1}H=2+\epsilon$.
\item If $E<0$, the zeros of $H$ off $\Sone$ are
$-\rho$ and $-\rho^{-1}$ for some $\rho>1$, both simple.
\end{enumerate}
\end{lemma}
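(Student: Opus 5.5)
The plan is to count zeros. $H$ has degree $d$ (since $A>0$), and the node sign condition already places $d-2$ distinct zeros on $\Sone$. So only two zeros remain, and we locate them using $E$.

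\textbf{Structure of the nodes.} By the proof of Lemma~\ref{angular}, $F(\theta)=e^{-id\theta/2}H(e^{i\theta})$ is real-valued, since the coefficients are real and palindromic. The alternation of $\sigma_jF(\theta_j)$ gives $m-1$ zeros in the open intervals $(\theta_j,\theta_{j+1})$ for $1\le j\le m-1$, all inside $(0,\pi)$, together with their conjugates. When $\epsilon=1$ we also have $H(-1)=0$, because $H(-1)=(-1)^dH(-1)$ by reciprocity. The two remaining zeros are closed under $z\mapsto\bar z$ and $z\mapsto 1/z$. The last node satisfies $\theta_m<\pi$, and its sign is $\sigma_m=(-1)^{m-1}$.

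\textbf{Case $\epsilon=0$.} Here $F(\pi)=(-1)^mH(-1)$. The interval $(\theta_m,\pi]$ on the upper semicircle maps under conjugation to $[\pi,2\pi-\theta_m)$, and $F(2\pi-\theta)=F(\theta)$, because $d$ is even and the coefficients are real.
\begin{itemize}
\item If $E=H(-1)<0$, then $F(\pi)$ has sign $(-1)^{m-1}$, the same sign as $F(\theta_m)$. The residual pair is then either a double zero at $-1$ or a pair on the arc near $-1$. We must show it is neither.
\end{itemize}
A cleaner route is to restrict $H$ to the negative real axis. Since $a_r>0$ and $A>0$, we have $H(0)=A>0$, and $H(-y)\to+\infty$ or $H(-y)\sim Ay^d>0$ as $y\to\infty$ ($d$ even).
\begin{itemize}
\item If $H(-1)<0$, the intermediate value theorem gives a root in $(-1,0)$ and a root in $(-\infty,-1)$. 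These are reciprocal, say $-\rho^{-1}$ and $-\rho$. Together with the $d-2$ zeros on $\Sone$ they account for all $d$ zeros, so both are simple and no other zeros lie off $\Sone$. This is (iii).
\item If $H(-1)>0$, then $-1$ is not a root. Count sign changes of $F$ on the closed upper arc: $F(\theta_m)$ has sign $(-1)^{m-1}$ and $F(\pi)$ has sign $(-1)^m$, so there is an extra zero in $(\theta_m,\pi)$, plus its conjugate. This gives $d$ zeros on $\Sone$ and $\operatorname{ord}_{-1}H=0$, which is (i).
\item If $H(-1)=0$, the residual pair must be stable under conjugation and inversion. Since $-1$ is already a zero and there are only two residual zeros, the order at $-1$ is even, so it is $2$ and all zeros lie on $\Sone$. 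This is (ii).
\end{itemize}

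\textbf{Case $\epsilon=1$.} Write $H(y)=(1+y)G(y)$. Then $G$ is palindromic of even degree $d-1$, with $G(-1)=H'(-1)=E$. The $d-3$ node zeros off $-1$ are zeros of $G$ on $\Sone$, and $G(0)=A>0$. We repeat the argument above for $G$:
\begin{itemize}
\item If $E<0$, real roots $-\rho^{\pm1}$ of $G$ exist by the intermediate value theorem.
\item If $E>0$, the sign comparison between $F(\theta_m)$ and $F$ near $\pi$ yields an extra conjugate pair on $\Sone$. Equivalently, we apply the even-case analysis to $e^{-i(d-1)\theta/2}G(e^{i\theta})=F(\theta)/(2\cos(\theta/2))$, which has the same signs as $F$ on $(0,\pi)$.
\item If $E=0$, parity forces $\operatorname{ord}_{-1}G=2$, so $\operatorname{ord}_{-1}H=3$.
\end{itemize}

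\textbf{Main obstacle.} The delicate step is the sign bookkeeping at $\theta=\pi$. We need that $F(\pi)$, or the reduced function for $G$ near $\pi$, has sign $(-1)^m\operatorname{sgn}E$. We also need the parity argument showing the order at $-1$ is exactly $2+\epsilon$ when $E=0$. In particular, we must rule out that the two leftover zeros are a non-real, non-unimodular quartet. This is impossible because only two zeros remain, and a non-real zero off $\Sone$ would come with three companions $\bar z$, $1/z$, $1/\bar z$.
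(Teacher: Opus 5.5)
Your proposal is correct and follows essentially the paper's argument. Both factor $H=(1+y)^\epsilon K$ with $K$ palindromic of degree $2m$ and $K(-1)=E$. When $E>0$, the extra unimodular pair comes from the sign change of $e^{-im\theta}K(e^{i\theta})$ on $(\theta_m,\pi)$. When $E<0$, the real reciprocal pair comes from $K(-1)<0<K(0)$. Degree counting then finishes each case.

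The one step you flag but do not prove is the evenness of $\operatorname{ord}_{-1}K$ when $E=0$. The paper settles this in one line by differentiating $K(y)=y^{2m}K(1/y)$ at $y=-1$, which gives $K'(-1)=-mK(-1)=0$.
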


\begin{proof}
By reciprocity,
\[
H(y)=(y+1)^\epsilon K(y),\qquad
K(y)=y^{2m}K(y^{-1}),\qquad K(0)=A,\qquad K(-1)=E.
\]
Put $G(\theta)=e^{-im\theta}K(e^{i\theta})$. Then
\[
F(\theta)=\bigl(2\cos(\theta/2)\bigr)^\epsilon G(\theta),
\]
so that the condition $\sigma_j F(\theta_j) > 0$ implies
\[
(-1)^{j-1}G(\theta_j)>0\quad(1\leq j\leq m),
\qquad G(\pi)=(-1)^mE.
\]
Thus, the intermediate value theorem ascertains a zero in each of the $m-1$ intervals $(\theta_j,\theta_{j+1})$, with their conjugates in total giving $2m-2$ distinct zeros of $K$ on
$\Sone\setminus\{-1\}$.\\

If $E>0$, then $G(\theta_m)G(\pi)<0$. A zero in
$(\theta_m,\pi)$ and its conjugate account for the remaining
two zeros of $K$. If $E=0$, differentiating the reciprocal identity at $y=-1$
gives
\[
K'(-1)=-mK(-1)=0.
\]
Hence $(y+1)^2\mid K$. Together with the $2m-2$ zeros already
obtained, this gives $\operatorname{ord}_{-1}K=2$ and all
zeros of $K$ on $\Sone$. If $E<0$, then $K(-1)<0<K(0)$, so $K$ has a zero
$\alpha\in(-1,0)$. Reciprocity gives the distinct zero
$\alpha^{-1}<-1$. These and the $2m-2$ unit-circle zeros
exhaust $\deg K=2m$, so both are simple. Since $H(y)=(y+1)^\epsilon K(y)$, the lemma is proved.
\end{proof}

The following lemma bounds the coefficient sums in
Lemma~\ref{angular}.

\begin{lemma}\label{handfold}
Let $d\geq2$, $m=\lfloor d/2\rfloor$, and $1\leq j\leq m$.
Suppose
\[
a_r=v_rv_{d-r},
\qquad
|v_r-1|\leq e_r:=c4^{-r}\quad(r\geq j),
\]
where $c>0$. Put
\[
E_j=\sum_{r=j}^{\infty}re_r
=c4^{-j}\left(\frac{4j}{3}+\frac49\right).
\]
Assume
\[
e_{d-r}\leq\eta\quad(j\leq r<d/2),
\]
where $\eta\geq0$, and assume additionally $e_m<1$ if
$d=2m$. Then
\begin{equation}\label{handfoldbound}
\begin{cases}
\displaystyle
4\sum_{r=j}^{m-1}r|a_r-1|
+\cot\left(\frac{\pi}{4m}\right)|a_m-1|,
&d=2m,\\[8pt]
\displaystyle
4\sum_{r=j}^{m}r|a_r-1|,
&d=2m+1
\end{cases}
\leq4E_j(1+\eta).
\end{equation}
If $0<v_r\leq1$ for every $r\geq j$, the right-hand side
may be replaced by $4E_j$.
\end{lemma}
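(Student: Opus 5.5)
The plan is to bound each term $|a_r-1|$ with $r\geq j$ by writing
\[
a_r-1=(v_r-1)v_{d-r}+(v_{d-r}-1),
\]
and then sum with the weights $4r$. A symmetric version of this splitting treats the central term when $d=2m$.

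Consider first $j\leq r<d/2$. Then $d-r>d/2>r\geq j$, so both $v_r$ and $v_{d-r}$ obey the hypothesis. We have $|v_{d-r}|\leq 1+e_{d-r}\leq 1+\eta$, which gives
\[
|a_r-1|\leq e_r(1+\eta)+e_{d-r}.
\]
The weight $r$ sits on a term of index $d-r>r$. Since $r\le d-r$, we may write $re_{d-r}\leq (d-r)e_{d-r}$. As $r$ runs over $[j,d/2)$, the index $d-r$ runs over distinct integers larger than $d/2$. Hence
\[
\sum_{j\le r<d/2} r\,|a_r-1| \le (1+\eta)\sum_{j\le r<d/2} re_r+\sum_{s>d/2} se_s .
\]
The two ranges of summation are disjoint, and both lie inside $[j,\infty)$.

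For $d$ odd, $m<d/2$, so $r=m$ is included in this range. Combining the two sums gives
\[
4\sum_{r=j}^m r\,|a_r-1|\leq 4(1+\eta)E_j .
\]

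For $d=2m$ the central term is $a_m=v_m^2$. It satisfies
\[
|a_m-1|=|v_m-1|\,|v_m+1|\le e_m(2+e_m)<3e_m,
\]
using the assumption $e_m<1$. It then remains to check that $\cot(\pi/(4m))\cdot 3e_m\leq 4me_m(1+\eta)$, so that this term is absorbed into $4(1+\eta)\,m e_m$. Since $m e_m$ does not occur among the other sums (which use only indices $r<m$ and $s>m$), the total is again at most $4(1+\eta)E_j$. The required inequality follows from $\cot x\leq 1/x$, which gives $\cot(\pi/4m)\le 4m/\pi$. Then $3\cdot 4m/\pi<4m$, since $3/\pi<1$. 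This is the delicate step. If we instead used $|a_m-1|\le e_m(2+e_m)$ with only the crude bound $e_m<1$, we would get the factor $12m/\pi>4m$, which is too large. So the factor $|v_m+1|$ must be estimated as $2+e_m<3$, and the constant must be tracked carefully.

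Finally, suppose $0<v_r\le 1$ for all $r\geq j$. Then $0<a_r\le 1$, and
\[
|a_r-1|=1-v_rv_{d-r}\le (1-v_r)+(1-v_{d-r})\le e_r+e_{d-r},
\]
because $1-xy\le (1-x)+(1-y)$ for $x,y\in(0,1]$, which is equivalent to $(1-x)(1-y)\ge 0$. Thus $\eta$ may be taken to be $0$. For the central term, $|a_m-1|=1-v_m^2\le 2e_m$. Then $\cot(\pi/4m)\cdot 2e_m\le 8me_m/\pi<4me_m$. Repeating the same summation gives the bound $4E_j$.

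The closed form for $E_j$ is routine. Summing $\sum_{r\ge j} r4^{-r}$ as an arithmetic–geometric series gives
\[
\sum_{r\ge j} r4^{-r}=4^{-j}\Bigl(\frac{4j}{3}+\frac49\Bigr),
\]
and multiplying by $c$ yields the stated expression.
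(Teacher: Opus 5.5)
Your proof is correct and is essentially the paper's proof. It uses the same bound $|a_r-1|\le e_r+e_{d-r}+\eta e_r$, the same reindexing $re_{d-r}\le(d-r)e_{d-r}$ into the disjoint tail $s>d/2$, and the same central estimate $\cot(\pi/4m)\,e_m(2+e_m)<(4m/\pi)\cdot 3e_m<4me_m$, which relies on $e_m<1$ and $\pi>3$. The only flaw is your ``delicate step'' aside, which is false and should be deleted: $12m/\pi<4m$ holds precisely because $\pi>3$, and the bound $e_m(2+e_m)<3e_m$ that you call crude is exactly the one you and the paper use, and it suffices.
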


\begin{proof}
For $j\leq r<d/2$,
\[
|a_r-1|
\leq e_r+e_{d-r}+e_re_{d-r},
\qquad
e_re_{d-r}\leq\eta e_r.
\]

Suppose first that $d=2m+1$. Since $r\leq d-r$,
\[
\begin{aligned}
\sum_{r=j}^{m}r(e_r+e_{d-r})
&\leq
\sum_{r=j}^{m}re_r
+\sum_{r=j}^{m}(d-r)e_{d-r}\\
&=\sum_{r=j}^{d-j}re_r
\leq E_j.
\end{aligned}
\]
Also,
\[
\sum_{r=j}^{m}re_re_{d-r}
\leq\eta\sum_{r=j}^{m}re_r
\leq\eta E_j.
\]
Thus
\[
4\sum_{r=j}^{m}r|a_r-1|\leq4E_j(1+\eta).
\]

Now suppose $d=2m$. Similarly,
\[
\begin{aligned}
\sum_{r=j}^{m-1}r(e_r+e_{d-r})
&\leq
\sum_{r=j}^{m-1}re_r+\sum_{r=m+1}^{d-j}re_r\\
&\leq E_j-me_m,
\end{aligned}
\]
and
\[
\sum_{r=j}^{m-1}re_re_{d-r}\leq\eta E_j.
\]
Since $a_m=v_m^2$,
\[
|a_m-1|\leq2e_m+e_m^2.
\]
Using $\cot t<t^{-1}$ for $0<t<\pi/2$, we obtain
\[
\cot\left(\frac{\pi}{4m}\right)|a_m-1|
<
\frac{4m}{\pi}(2+e_m)e_m
<4me_m,
\]
because $e_m<1$ and $\pi>3$. Hence
\[
4\sum_{r=j}^{m-1}r|a_r-1|
+\cot\left(\frac{\pi}{4m}\right)|a_m-1|
\leq4E_j(1+\eta).
\]

Finally, if $0<v_r\leq1$, then
\[
|a_r-1|
=1-v_rv_{d-r}
\leq(1-v_r)+(1-v_{d-r})
\leq e_r+e_{d-r}.
\]
The same estimates, with the terms involving $\eta$
omitted, give the bound $4E_j$.
\end{proof}

\subsection{Elementary estimates}

For $s>2$, define
\[
D_1(s)=\sum_{\substack{n\geq3\\ n\text{ odd}}}\frac{1}{n^s},
\qquad
D_2(s)=\sum_{\substack{n\geq3\\ n\text{ odd}}} \frac{n-1}{n^s}.
\]
Comparing with the associated integral gives
\begin{align}
D_1(s)
&\leq \frac{1}{3^{s}}+\frac{1}{5^{s}}+\frac{5^{1-s}}{2(s-1)},
\label{handT}\\
D_2(s)
&\leq \frac{2}{3^{s}}+\frac{4}{5^{s}}
+\frac12\left(
\frac{5^{2-s}}{s-2}-\frac{5^{1-s}}{s-1}
\right).
\label{handD}
\end{align}
In particular,
\begin{align}
D_1(3)&<\frac1{18}, \label{handT3}\\
D_1(5)&<\frac1{200}, \label{handT5}\\
D_2(4)&<\frac1{24}, \label{handD4}\\
D_2(5)&<\frac1{91}. \label{handD5}
\end{align}
Moreover,
\[
D_1({s+2})\leq\frac19 D_1(s),\qquad
D_2({s+2})\leq\frac19 D_2(s).
\]
Since $2^sD_2(s)$ decreases with $s$, \eqref{handD4} yields
\begin{equation}\label{handDscaled}
D_2(s)<\frac23\, \cdot \,2^{-s}\qquad(s\geq4).
\end{equation}
We also use
\[
\zeta(3)>1+2^{-3}=\frac98,
\qquad
\zeta(2) = \frac{\pi^2}{6}
<\frac53.
\]

Throughout, $\chi$ is a real nonprincipal character, induced by
$\chi^*$ of conductor $f$, and $V_s(\chi)$ is defined by
\eqref{Vdef}.

\begin{lemma}\label{evenV}
Let $V_s(\chi)$ be as in \eqref{Vdef}. For every integer $s\geq4$,
\begin{equation}\label{handV}
|V_s(\chi)-1|<2^{1-s}.
\end{equation}
Furthermore, $V_3(\chi)>27/40$.
\end{lemma}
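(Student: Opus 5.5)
Write $V_s(\chi)$ as an Euler product over all primes:
\[
V_s(\chi)=\prod_{p\nmid N}\bigl(1-\chi^*(p)p^{-s}\bigr)^{-1}\prod_{p\in T_\chi}\bigl(1-\chi^*(p)p^{1-s}\bigr),
\]
with the primes dividing $f$ contributing trivially. The plan is to bound $\log V_s(\chi)$ above and below by explicit quantities and then exponentiate.

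\textbf{Upper bound.} Each factor with $p\nmid N$ satisfies $(1-\chi^*(p)p^{-s})^{-1}\le(1-p^{-s})^{-1}$. Each factor with $p\in T_\chi$ satisfies $1-\chi^*(p)p^{1-s}\le 1+p^{1-s}$. Hence
\[
V_s(\chi)\le \prod_p (1+p^{1-s})(1-p^{-s})^{-1}.
\]
For $p=2$ I would keep the factors exactly rather than bound them: $(1+2^{1-s})(1-2^{-s})^{-1}$. For $p\ge3$ the product is at most
\[
\exp\Bigl(\sum_{n\ge3\text{ odd}}\frac{n}{n^{s}}+\text{small}\Bigr),
\]
since $\log(1+x)\le x$ and $-\log(1-x)\le x+x^2$. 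The sum over odd $n\ge3$ of $(n-1)/n^s+1/n^s$ is $D_2(s)+D_1(s)$, which is controlled by \eqref{handD}, \eqref{handT} and \eqref{handDscaled}. In fact it is cleaner to use a crude bound: since $(1+p^{1-s})/(1-p^{-s})\le 1+(p+1)p^{-s}/(1-p^{-s})$, bound $\log$ of the odd-prime part by $\sum_{n\ge3\text{ odd}}(n+1)n^{-s}\cdot\frac{1}{1-3^{-s}}$. This is at most $\frac{3^4}{80}(D_2(s)+2D_1(s))$, which is $O(3^{-s}\cdot 3)$ and thus much smaller than $2^{-s}$ for $s\ge4$. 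Combining with the $p=2$ factor should give
\[
V_s(\chi)<1+2^{1-s}
\]
after checking $s=4$ numerically. Larger $s$ then follow from monotonicity, using $D_i(s+2)\le D_i(s)/9$.

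\textbf{Lower bound.} Symmetrically,
\[
V_s(\chi)\ge\prod_p(1-p^{1-s})(1+p^{-s})^{-1}.
\]
At $p=2$ this is $(1-2^{1-s})/(1+2^{-s})$. For the odd primes I would use $\log(1-x)\ge -x-x^2$ for $x\le 1/9$, again reducing to $D_1$ and $D_2$. The target is
\[
V_s(\chi)>1-2^{1-s}.
\]
The $p=2$ factor alone is $1-2^{1-s}$ divided by $1+2^{-s}$, so it is slightly below the target. This is the main obstacle: there is no slack at $p=2$ in the worst case $2\in T_\chi$ with $\chi^*(2)=1$. But that worst case and the extremal bound at $p\nmid N$ cannot both occur at $p=2$. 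Concretely:
\begin{itemize}
\item if $2\in T_\chi$, the $p=2$ factor is exactly $1\mp2^{1-s}$, with no $(1+2^{-s})^{-1}$;
\item if $2\nmid N$, the factor is $(1\mp2^{-s})^{-1}\ge 1-2^{-s}$;
\item if $2\mid f$, the factor is $1$.
\end{itemize}
So the $p=2$ factor is always at least $1-2^{1-s}$ or at least $1-2^{-s}$. In the first case I still need the odd primes to contribute something at least $1$, which fails in general. So I would instead rely on a strict slack: in the first case, $\chi^*(2)=1$ and $f\ge5$ odd. I would then take each odd factor $\ge(1-p^{1-s})/(1+p^{-s})$ and show that the product over odd $p$ exceeds $1-\delta$ with
\[
(1-2^{1-s})(1-\delta)>1-2^{1-s}\cdots
\]
This does not work directly. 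So I would refine: bound the odd-prime loss $\delta\le 1.1\,(D_2(s)+D_1(s))$ and compare with the target margin. Since the claimed inequality is strict and sharp only asymptotically, I expect the author to prove it as
\[
|V_s-1|\le 2^{-s}\cdot(\text{const}<2),
\]
using that the $p=2$ contribution is at most $2^{1-s}$ in absolute deviation only when $2\in T_\chi$. Then the odd primes must be shown to add less than $2^{-s}\cdot$(gap). I will check whether $D_2(s)<\frac23 2^{-s}$ from \eqref{handDscaled} supplies exactly this, by working in logs with deviation $2^{1-s}$ versus $2^{-s}+\frac23 2^{-s}$, the latter being less than $2^{1-s}$. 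I expect this case analysis at $p=2$ to be the crux.

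\textbf{The bound $V_3(\chi)>27/40$.} Use the same lower product at $s=3$. The $p=2$ factor is at least $\min(1-1/4,\,1/(1+1/8))=3/4$. The odd primes contribute at least $\exp(-(D_2(3)+D_1(3))\cdot c)$, where I expect a bound around $0.9$ using \eqref{handT3} and a direct estimate of $D_2(3)$. This gives $\tfrac34\cdot 0.9=27/40$, strictly once the estimates are strict.
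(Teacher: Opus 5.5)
There is a genuine gap, and it starts in your Euler product. $L(s,\chi^*)$ has an Euler factor at every prime $p\nmid f$, including $p\in T_\chi$. So the local factor of $V_s(\chi)$ at $p\in T_\chi$ is
\[
\frac{1-\chi^*(p)p^{1-s}}{1-\chi^*(p)p^{-s}}
=1-\chi^*(p)\,\frac{(p-1)p^{-s}}{1-\chi^*(p)p^{-s}},
\]
not $1-\chi^*(p)p^{1-s}$. With $x=2^{-s}$, the factor at $p=2$ is $1$, $(1\mp x)^{-1}$, $1-x/(1-x)$ or $1+x/(1+x)$. In every case it lies in $[1-x/(1-x),\,1+x/(1-x)]$, so its deviation from $1$ is about $2^{-s}$, never $2^{1-s}$. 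The obstacle you single out ($2\in T_\chi$, $\chi^*(2)=1$) is an artifact of the missing factor. With your formula the claimed lower bound would actually fail whenever some odd-prime factor is below $1$, and your argument stops there unresolved. Your closing guess (deviation $2^{-s}$ at $2$ plus $\tfrac23 2^{-s}$ from the odd primes) is the right bookkeeping, but it contradicts your own computation of the $2$-factor. Only the corrected factor above justifies it. With that factor in hand, \eqref{handDscaled} closes the argument:
\begin{itemize}
\item each odd-prime factor lies between $1-(p-1)p^{-s}/(1-p^{-s})$ and $1+(p-1)p^{-s}$;
\item $\prod(1-t_n)\ge1-\sum t_n$ then gives $V_s(\chi)>1-(\tfrac{16}{15}+\tfrac{27}{40})x$;
\item $\prod(1+t_n)\le\exp(\sum t_n)$ together with $e^t\le(1-t)^{-1}$ gives $V_s(\chi)-1<\tfrac{208}{107}x$.
\end{itemize}

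The other two parts also fail as written.

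\textbf{Upper bound.} Replacing every local factor by $(1+p^{1-s})(1-p^{-s})^{-1}$ makes the $2$-factor alone equal to $(1+2x)/(1-x)$. This exceeds $1+2x$ for every $s$ (at $s=4$ it is $6/5>9/8$), so no numerical check can yield $V_s(\chi)<1+2^{1-s}$. Your odd-prime bound with weights $(n+1)n^{-s}$ is also about $0.067$ at $s=4$, which is larger than $2^{-4}$, not much smaller.

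\textbf{The bound on $V_3$.} Note that $D_2(3)+D_1(3)=\sum_{n\ge3,\,n\text{ odd}}n^{-2}=\pi^2/8-1\approx0.234$. So your odd-prime estimate gives at most about $0.79$, not $0.9$, and $\tfrac34\cdot0.79<\tfrac{27}{40}$. The clean route is to observe that at $s=3$ every local factor is at least $(1-p^{-2})/(1-p^{-3})$. Equality holds for $p\in T_\chi$ with $\chi^*(p)=1$, and for $p\nmid N$ the inequality reduces to $p^3+1\ge2p^2$. The product of these factors over all primes is exactly $\zeta(3)/\zeta(2)>(9/8)/(5/3)=27/40$.
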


\begin{proof}
Put $x=2^{-s}\leq1/16$. At a prime $p\geq3$, every local
factor lies between
\[
1-\frac{(p-1)p^{-s}}{1-p^{-s}}
\quad\text{and}\quad
1+(p-1)p^{-s}.
\]
At $p=2$, the corresponding bounds are
$1-(x/(1-x))$ and $1+(x/(1-x))$.
Using
\[
\prod_n(1-t_n)\geq1-\sum_n t_n,
\qquad
\prod_\nu(1+t_n)\leq
\exp\left(\sum_n t_n\right),
\]
we obtain
\[
V_s(\chi)
\geq1-\frac{x}{1-x}-\frac{D_2(s)}{1-3^{-s}}
>1-\left(\frac{16}{15}+\frac{27}{40}\right)x
>1-2x
\]
and
\[
V_s(\chi)
\leq\exp\left(\frac{x}{1-x}+D_2(s)\right)
<\exp\left(\frac{26}{15}x\right).
\]
Since $e^t\leq(1-t)^{-1}$ for $0\leq t<1$,
\[
V_s(\chi)-1<
\frac{(26/15)x}{1-13/120}
=\frac{208}{107}x<2x.
\]
At $s=3$, every local factor is at least
$(1-p^{-2})/(1-p^{-3})$. Hence
\[
V_3(\chi)\geq\frac{\zeta(3)}{\zeta(2)}
>\frac{27}{40}.
\]
\end{proof}

\subsection{Estimates for odd characters}

Let $\chi$ be odd. Since
\[
V_1(\chi)=L(1,\chi^*)
\prod_{p\in T_\chi}(1-\chi^*(p)),
\]
with $L(1,\chi^*)>0$ and $\chi^*(p)\in\{\pm1\}$ for
$p\in T_\chi$, we have
\[
V_1(\chi)>0
\quad\Longleftrightarrow\quad
\chi^*(p)=-1\quad\text{for every }p\mid N\text{ with }p\nmid f.
\]

\begin{lemma}\label{oddintervals}
Suppose $V_1(\chi)>0$. For $r\geq1$, let
\[
\lambda_r=
\begin{cases}
1,&2\mid f,\\[3pt]
\displaystyle\frac{1}{1-\chi^*(2)2^{-(2r+1)}},
&2\nmid f,\quad 2\notin T_\chi,\\[8pt]
\displaystyle\frac{1-\chi^*(2)2^{-2r}}
{1-\chi^*(2)2^{-(2r+1)}},
&2\in T_\chi.
\end{cases}
\]
Set $U_r:=V_{2r+1}(\chi)/\lambda_r$, the product of the
factors at odd primes. Then
\begin{equation}\label{handU}
U_1>\frac{17}{18},
\qquad
1-\frac{9^{2-r}}{200}
<U_r<
1+\frac{9^{2-r}}{90}
\quad(r\geq2),
\end{equation}
and
\begin{equation}\label{handWgeneral}
|V_{2r+1}(\chi)-1|<\frac7{10}4^{-r}\qquad(r\geq2).
\end{equation}
More precisely:
\begin{enumerate}[(i)]
\item If $\chi^*(2)=1$, or if $2\in T_\chi$ and
$\chi^*(2)=-1$, then $V_{2r+1}(\chi)>1$ for every $r\geq1$.
\item If $2\mid f$, then
\[
V_3(\chi)>\frac{17}{18},
\qquad V_{2r+1}(\chi)>\frac{199}{200}\quad(r\geq2).
\]
\item If $\chi^*(2)=-1$ and $2\notin T_\chi$, then
\begin{equation}\label{handWnegative}
V_3(\chi)>\frac{68}{81},
\qquad
0<1-V_{2r+1}(\chi)<\frac{29}{50}4^{-r}\quad(r\geq2).
\end{equation}
\end{enumerate}
\end{lemma}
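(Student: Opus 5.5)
The plan is to factor $V_{2r+1}(\chi)$ into local Euler factors and to bound the factor at $2$ (which is exactly $\lambda_r$) separately from the product $U_r$ over odd primes. Put $s=2r+1$. The factor at $p$ is $1$ if $p\mid f$. It is $(1-\chi^*(p)p^{-s})^{-1}$ if $p\nmid N$. It is $(1-\chi^*(p)p^{1-s})/(1-\chi^*(p)p^{-s})$ if $p\in T_\chi$. The hypothesis $V_1(\chi)>0$ forces $\chi^*(p)=-1$ for every $p\in T_\chi$, so each $T_\chi$-factor equals $(1+p^{1-s})/(1+p^{-s})$. Hence the $T_\chi$-factors are always $>1$ and at most $1+(p-1)p^{-s}$. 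Listing the three cases at $p=2$ reproduces the stated formula for $\lambda_r$.

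\textbf{Bounds for $U_r$.} For the lower bound, every odd local factor is at least $(1+p^{-s})^{-1}\geq 1-p^{-s}$. Using $\prod(1-t_n)\geq 1-\sum t_n$ gives $U_r\geq 1-D_1(s)$. Then \eqref{handT3} gives $U_1>17/18$. For $r\geq2$, \eqref{handT5} together with $D_1(s+2)\leq D_1(s)/9$ gives $U_r>1-9^{2-r}/200$. For the upper bound, each odd factor is at most $1+(p-1)p^{-s}$; for $p\nmid N$ this uses $(1-p^{-s})^{-1}\leq 1+p^{-s}/(1-3^{-s})\leq 1+(p-1)p^{-s}$ when $p\geq3$. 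Hence $U_r\leq\exp(D_2(s))$. At $s=5$, \eqref{handD5} and $e^t\leq(1-t)^{-1}$ give $U_2<1+\tfrac1{91-1}=1+\tfrac1{90}$. I would then propagate to $r>2$ with the decay $D_2(s+2)\leq D_2(s)/9$; the small convexity loss from the exponential is absorbed because $D_2$ only shrinks. This proves \eqref{handU}.

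\textbf{The three cases.} In case (ii), $\lambda_r=1$, so the claims are just \eqref{handU}. For case (i) with $\chi^*(2)=1$ (so $2\notin T_\chi$ by $V_1>0$), we have $\lambda_r=(1-2^{-s})^{-1}$. Thus $V_{2r+1}\geq(1-D_1(s))/(1-2^{-s})$, and this exceeds $1$ as soon as $D_1(s)<2^{-s}$. That holds at $s=3$ since $1/18<1/8$, and it propagates because $D_1$ decays by $1/9$ per step while $2^{-s}$ decays by $1/4$. For case (i) with $2\in T_\chi$ and $\chi^*(2)=-1$, we have $\lambda_r=1+2^{-s}/(1+2^{-s})\geq 1+\tfrac89 2^{-s}$, and the same comparison works since $1/18<1/9$. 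In case (iii), $\lambda_r=(1+2^{-s})^{-1}$. This gives $V_3>\tfrac89\cdot\tfrac{17}{18}=\tfrac{68}{81}$. For $r\geq2$ we get $V_{2r+1}<1$ because $U_r<1+9^{2-r}/90<1+2^{-s}$, which already holds at $r=2$ since $1/90<1/32$ and persists by the decay rates. The lower estimate is $1-V_{2r+1}<2^{-s}+9^{2-r}/200$. Here $2^{-s}=\tfrac12 4^{-r}$ and $9^{2-r}/200\leq\tfrac{2}{25}4^{-r}$ for $r\geq2$, giving the constant $\tfrac{29}{50}$.

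\textbf{The general estimate \eqref{handWgeneral}.} I would combine the $\lambda_r$ and $U_r$ bounds case by case. Writing $V-1=(\lambda_r-1)U_r+(U_r-1)$ and using $|\lambda_r-1|\leq 2^{-s}/(1-2^{-s})$ gives $|V-1|\lesssim 2^{-2r-1}\cdot\tfrac{32}{31}\cdot(1+\tfrac1{90})+9^{2-r}/90$. One then checks that this is $<\tfrac7{10}4^{-r}$ at $r=2$; the gap widens for larger $r$. The main obstacle is purely numerical bookkeeping. Specifically, one must verify that the inequalities proved at the base values $r=1,2$ really propagate, using only the geometric ratios $1/9$ for $D_1,D_2$ versus $1/4$ for the powers of $2$. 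One must also keep the exponential-to-linear conversions tight enough to hit the stated constants $1/90$, $17/18$, $68/81$, $29/50$ and $7/10$.
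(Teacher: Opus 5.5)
Your proposal is correct and follows essentially the same route as the paper: you split off the factor $\lambda_r$ at $2$, bound $U_r$ between $1-D_1(2r+1)$ and $\exp(D_2(2r+1))$, and run the three cases at $p=2$. Your decomposition $V-1=(\lambda_r-1)U_r+(U_r-1)$ gives exactly the paper's constant $976/1395$ at $r=2$.

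One small imprecision, which does not break the argument: in case (i) with $2\in T_\chi$, the condition actually needed is $D_1(s)<\frac{(8/9)2^{-s}}{1+(8/9)2^{-s}}$, which is $1/10$ at $s=3$, rather than $D_1(s)<\frac89 2^{-s}$. The bound $D_1(3)<1/18$ meets it comfortably, and the paper sidesteps this by using $D_1(s)<2^{-s}/(1+2\cdot 2^{-s})$ directly.
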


\begin{proof}
For $s=2r+1$, every local factor at an odd prime $p$ lies
between $1-p^{-s}$ and $1+(p-1)p^{-s}$. Thus
\[
U_r\geq1-D_1(2r+1),
\qquad U_r\leq e^{D_2(2r+1)}.
\]
The lower bounds in \eqref{handU} follow from
\eqref{handT3} and \eqref{handT5}. For $r\geq2$, put
$t=9^{2-r}\leq1$. By \eqref{handD5},
\[
U_r<e^{t/91}
\leq\frac1{1-t/91}
\leq1+\frac{t}{90}.
\]
Put $q=2^{-(2r+1)}$. The possibilities for $\lambda_r$ are
\[
1,\qquad
\frac1{1-q},\qquad
\frac1{1+q},\qquad
\frac{1+2q}{1+q}.
\]
For $r\geq2$,
\[
|\lambda_r-1|\leq\frac{16}{31}4^{-r},
\qquad
|U_r-1|<\frac8{45}4^{-r}.
\]
Consequently,
\[
|V_{2r+1}(\chi)-1|
<
\left(\frac{16}{31}+\frac8{45}+\frac8{1395}\right)4^{-r}
=\frac{976}{1395}4^{-r}
<\frac7{10}4^{-r}.
\]
For (i), \eqref{handT3} and geometric decay give
\[
D_1(2r+1)<\frac49q<\frac{q}{1+2q},
\qquad q\leq\frac18.
\]
Hence
\[
U_r>1-q,
\qquad
U_r>\frac{1+q}{1+2q},
\]
which prove the two assertions in (i). Part (ii) follows
from $\lambda_r=1$. For (iii), $\lambda_1=8/9$, so
\[
V_3(\chi)>\frac89\frac{17}{18}=\frac{68}{81}.
\]
For $r\geq2$, $9^{2-r}/90<q$, and therefore
\[
V_{2r+1}(\chi)<\frac{1+9^{2-r}/90}{1+q}<1.
\]
Also,
\[
1-V_{2r+1}(\chi)
<q+\frac{9^{2-r}}{200}
\leq\left(\frac12+\frac2{25}\right)4^{-r}
=\frac{29}{50}4^{-r}.
\]
\end{proof}

\medskip

\section{\bf Zeros of the polynomials \texorpdfstring{$R_k(x;\chi,\psi)$}{Rk(x;chi,psi)}}
\medskip

Put
\[
 \e=\begin{cases}1,&\psi(-1)=-1,\\2,&\psi(-1)=1,\end{cases}
 \qquad D=k-2-\e,
 \qquad P_{D,j}(X)=(X-1)^{2j}\sum_{h=0}^{D-2j}X^h.
\]
Every $P_{D,j}$ is reciprocal of degree $D$ with leading coefficient $1$. \\

For $k\geq4$ we have $D\geq0$. For $0 \leq j \leq \lfloor D/2 \rfloor $, set
\begin{equation}\label{cj}
 c_j=\frac{|B_{D+2-2j,\chi}|}{(D+2-2j)!}
      \frac{|B_{\e+2j,\psi}|}{(\e+2j)!}M^{1-\e-2j}.
\end{equation}
Note that all $c_j$ are nonnegative, and $c_j>0$ for $j\geq1$. The coefficient $c_0$ is zero exactly when $\e=1$ and $B_{1,\psi}=0$. \\

Substituting $k-s=\e+2j$ in \eqref{Rdef}, removing the $s=1$ summand which is zero, and using the last assertion of Lemma~\ref{berL} gives
\begin{equation}\label{Rdecomp}
 R_k(x;\chi,\psi)= \delta \, (x-1)^{\e} H_D(x),\qquad \text{ for } \qquad
 H_D(x)=\sum_{j=0}^{\lfloor D/2\rfloor}c_jP_{D,j}(x) \, \text{ and } \, \delta \in \{-1,1\}.
\end{equation}

We now derive a criterion for when the polynomial $H_D(x)$ has at least $D-2$ distinct zeros on the unit circle.

\begin{lemma}\label{cheb}
With the notation as above, suppose that $D\geq4$, $c_0\geq0$, and $c_j>0$ for $1\leq j\leq\lfloor D/2\rfloor$. If
\begin{equation}\label{chebtest}
 \frac94\, \cdot \, \frac{c_2}{c_1}+\sum_{j=3}^{\lfloor D/2\rfloor}4^{j-1} \, \cdot \, \frac{c_j}{c_1}<1,
\end{equation}
then $H_D$ has at least $D-2$ distinct zeros on $\Sone$.
\end{lemma}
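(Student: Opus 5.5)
The plan is to verify the sign-alternation hypothesis of Lemma~\ref{angular} directly, using the closed form of each $P_{D,j}$ on the unit circle. I would not use the coefficient tests \eqref{eventest}--\eqref{oddtest}.

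\textbf{Step 1: closed form on the circle.} Put $F_j(\theta)=e^{-iD\theta/2}P_{D,j}(e^{i\theta})$ and $s=\sin(\theta/2)$. From $e^{i\theta}-1=2i\,s\,e^{i\theta/2}$ and the geometric sum $\sum_{h=0}^{D-2j}e^{ih\theta}$, all phases cancel and
\[
F_j(\theta)=(-4s^2)^j\,\frac{\sin\bigl((D+1-2j)\theta/2\bigr)}{s}.
\]
Hence $F=\sum_j c_jF_j$ is real, and $H_D$ is reciprocal of degree $D$.

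\textbf{Step 2: choose nodes that kill $c_0$.} Since $c_0\ge 0$ is not controlled by \eqref{chebtest}, I would use the nodes $\varphi_n=2\pi n/(D+1)$, $1\le n\le\lfloor D/2\rfloor$, all lying in $(0,\pi)$. At these nodes $F_0(\varphi_n)=0$, and
\[
\sin\bigl((D+1-2j)\varphi_n/2\bigr)=(-1)^{n+1}\sin(j\varphi_n).
\]
Therefore
\[
(-1)^{n}F(\varphi_n)=4s\sin\varphi_n\Bigl(c_1-\sum_{j\ge2}c_j(-4s^2)^{j-1}\frac{\sin(j\varphi_n)}{\sin\varphi_n}\Bigr).
\]
If the bracket is positive at every node, $F$ alternates in sign along the nodes. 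The intermediate value theorem then gives $\lfloor D/2\rfloor-1$ zeros in $(0,\pi)$, and their conjugates supply the same number again. When $D$ is odd, reciprocity gives $H_D(-1)=0$ as well. In either case this yields $D-2$ distinct zeros on $\Sone$, exactly as in the proof of Lemma~\ref{angular}.

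\textbf{Step 3: coefficient estimate (main obstacle).} It remains to show
\[
4^{j-1}s^{2j-2}\,\Bigl|\frac{\sin(j\varphi)}{\sin\varphi}\Bigr|\le\kappa_j,
\qquad \kappa_2=\tfrac94,\quad \kappa_j=4^{j-1}\ (j\ge3),
\]
at the relevant nodes. Hypothesis \eqref{chebtest} then makes the bracket positive.

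\begin{itemize}
\item The crude bound $|\sin j\varphi|\le j\sin\varphi$ loses a factor $j$.
\item For $j=2$ the quantity is $8s^2|1-2s^2|$. This is small for small and moderate $s$ but grows as $\varphi_n$ approaches $\pi$.
\end{itemize}

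This is the step I expect to fail as stated. What I would try first is to shift to nodes adapted to the $j=1$ term, namely zeros of $\cos((2j-1)\theta/2)$-type expressions, or to split the range of $n$. For large $n$ I would use the trivial count above together with the positive contribution $c_0\cot(\theta/2)$ from nodes like $\theta_n$ of Lemma~\ref{angular}. Writing $\sin(j\varphi)/\sin\varphi=U_{j-1}(\cos\varphi)$, the idea is to optimize the Chebyshev polynomial bound against $s^{2j-2}$. The constant $9/4$ should then emerge as the maximum of the $j=2$ weight over the admissible nodes, and $4^{j-1}$ should come from $|s^{2j-2}U_{j-1}|\le1$ there. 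Pinning down the node choice that makes both constants hold uniformly in $D$ is the crux.
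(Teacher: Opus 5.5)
Your Steps 1 and 2 are computed correctly, but Step 3 is not merely unfinished. At the nodes $\varphi_n=2\pi n/(D+1)$ the required inequality is false, so \eqref{chebtest} cannot make your bracket positive. The $j=2$ contribution to the bracket is $+8c_2s^2\cos\varphi_n$. For $n=\lfloor D/2\rfloor$ and large $D$ one has $\varphi_n\to\pi$ and $s^2\to1$, so the bracket is close to $c_1-8c_2$ plus the $j\geq3$ terms. Take $c_2/c_1=2/5$ and all $c_j$, $j\geq3$, positive but tiny. Then the left side of \eqref{chebtest} is about $9/10<1$, yet the bracket is negative at the last node and the alternation breaks. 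Likewise, your hoped-for bound $|s^{2j-2}U_{j-1}(\cos\varphi)|\leq1$ fails near $\varphi=\pi$, where the quantity tends to $|U_{j-1}(-1)|=j$. The root cause is that your main term $4c_1s\sin\varphi$ vanishes at $\varphi=\pi$, so nothing dominates the higher terms there. This is the price of choosing nodes that kill $c_0$. Your fallbacks are not carried out either. At the nodes of Lemma~\ref{angular} the $c_0$ and $c_1$ terms enter with opposite signs: $c_0\cot(\theta/2)$ versus $-4c_1\sin(\theta/2)\cos(\theta/2)$. Their competition is therefore not controlled by \eqref{chebtest}.

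The missing idea is that $c_0$ need not be eliminated; instead, choose nodes where it enters with the favorable sign. Put $\theta=2t$, so that $F_j(2t)=(-4)^j\sin^{2j-1}t\,\sin((D+1-2j)t)$. Take $t_\ell=\ell\pi/D$ for $1\leq\ell\leq D-1$; these are the nontrivial $D$-th roots of unity, spread over the whole circle. Since $Dt_\ell=\ell\pi$, one has $\sin((D+1-2j)t_\ell)=-(-1)^\ell\sin((2j-1)t_\ell)$, and hence
\[
(-1)^\ell F(2t_\ell)=c_0+\sum_{j\geq1}(-1)^{j+1}4^jc_j\sin^{2j-1}(t_\ell)\,\sin((2j-1)t_\ell).
\]
Here the main term $4c_1\sin^2t_\ell$ does not degenerate. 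The $j=2$ term divided by it equals $-4(c_2/c_1)x(3-4x)$ with $x=\sin^2t_\ell$. Since $\max_{0\leq x\leq1}x(3-4x)=9/16$, this produces exactly the constant $9/4$. For $j\geq3$ the ratio is at most $4^{j-1}c_j/c_1$, because $|\sin^{2j-3}t\,\sin((2j-1)t)|\leq1$. So \eqref{chebtest} gives $(-1)^\ell F(2t_\ell)>0$ at all $D-1$ nodes. Since $t\mapsto e^{2it}$ is injective on $(0,\pi)$, the intermediate value theorem then gives $D-2$ distinct zeros on $\Sone$ directly, with no conjugation step and no appeal to $H_D(-1)=0$.
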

\begin{proof} 
For $0<t<\pi$,
\[
 e^{-iDt}P_{D,j}(e^{2it})=(-4)^j \,  \sin^{2j-1}\left(t \right) \, \sin((D-2j+1)t).
\]
Let $F(t)=e^{-iDt}H_D(e^{2it})$.  At the points $t_\ell=\ell\pi/D$, $1\leq\ell\leq D-1$, we have
\begin{equation}\label{nodeR}
 (-1)^\ell F(t_\ell)=c_0+\sum_{j\geq1}(-1)^{j+1} \, 4^j \, c_j \,
 \sin^{2j-1}(t_\ell) \, \sin((2j-1)t_\ell).
\end{equation}
Here, we have used that for $j=0$, the term is $\sin((D+1)t_{\ell})/\sin t_{\ell} = \cos \ell \pi = (-1)^{\ell}$. Observe that the $j=1$ term is $4c_1 \,\sin^2t_\ell$ while the $j=2$ term is $
-16c_2\sin^3t_{\ell}\sin(3t_{\ell})$. Using \(\sin(3t_{\ell})=(\sin t_{\ell})(3-4\sin^2t_{\ell})\) and putting \(x=\sin^2 t_{\ell}\), their ratio is
\[
\frac{-16c_2\sin^3t_{\ell}\sin(3t_{\ell})}{4c_1\sin^2t_{\ell}}
=-4\frac{c_2}{c_1}x(3-4x).
\]
Thus the ratio is bounded below by
\[
 -4\frac{c_2}{c_1}\max_{0\leq x\leq1}x(3-4x)=-\frac94\frac{c_2}{c_1},
\]
as $x(3-4x)=9/16-4(x-3/8)^2$. Moreover, for $j\geq3$, the absolute value of the $j$-th
summand relative to the positive term $4c_1\sin^2t_\ell$
is at most $4^{j-1}c_j/c_1$. Hence
\[
(-1)^\ell F(t_\ell)
\geq c_0+4c_1\sin^2t_\ell
\left(
1-\frac94\frac{c_2}{c_1}
-\sum_{j=3}^{\lfloor D/2\rfloor}
4^{j-1}\frac{c_j}{c_1}
\right).
\]
Condition \eqref{chebtest} therefore gives
$(-1)^\ell F(t_\ell)>0$ for $1\leq\ell\leq D-1$.
By continuity, $F$ has a zero in each interval
$(t_\ell,t_{\ell+1})$, $1\leq\ell\leq D-2$.
Since $t\mapsto e^{2it}$ is injective on $(0,\pi)$,
these yield at least $D-2$ distinct zeros of $H_D$ on $\Sone$.

\end{proof}

We apply this lemma to the polynomial family $R_k$ below. 
\begin{proposition}\label{Rminus2}
Let $\chi$ mod $L$ and $\psi$ mod $M$ be nonprincipal real Dirichlet characters satisfying $\chi(-1)\psi(-1) = (-1)^k$. If $R_k(x;\chi,\psi)\not\equiv0$, it has at least $\deg R_k-2$ zeros on $\Sone$, counted with multiplicity.
\end{proposition}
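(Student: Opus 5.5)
The plan is to use the factorization \eqref{Rdecomp}, $R_k(x;\chi,\psi)=\delta\,(x-1)^{\e}H_D(x)$. The factor $(x-1)^{\e}$ contributes $\e$ zeros at $x=1\in\Sone$, so it suffices to show that $H_D$ has at least $\deg H_D-2$ zeros on $\Sone$. Every $P_{D,j}$ is monic of degree $D$, and $c_j\geq0$ with $c_1>0$ whenever $D\geq2$. Hence the leading coefficient of $H_D$ is $\sum_j c_j>0$ and $\deg H_D=D$.

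The small cases are immediate. For $D\leq2$ there is nothing to prove. For $D=3$, $H_D$ is reciprocal of odd degree with nonzero leading coefficient, so $H_D(-1)=0$, which gives the required $D-2=1$ zero on $\Sone$. (If $D<2$, the bound $\deg R_k-2\leq\e-1$ is covered by the zeros at $1$.) So assume $D\geq4$. I will verify the criterion \eqref{chebtest} and then invoke Lemma~\ref{cheb}.

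The key step is a uniform bound on $c_j/c_1$. By \eqref{Bernabs}, $|B_{m,\xi}|/m!$ equals a constant independent of $m$ times $(Q_\xi/2\pi)^mV_m(\xi)$. Put $r=j-1\geq1$. Then
\[
\frac{c_j}{c_1}=\Bigl[(2\pi)^{2r}Q_\chi^{-2r}\frac{V_{D-2r}(\chi)}{V_D(\chi)}\Bigr]\cdot\Bigl[(2\pi)^{-2r}\Bigl(\frac{Q_\psi}{M}\Bigr)^{2r}\frac{V_{\e+2+2r}(\psi)}{V_{\e+2}(\psi)}\Bigr],
\]
and the powers of $2\pi$ cancel. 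Since $Q_\psi\mid M$, we have $(Q_\psi/M)^{2r}\leq1$. Because $\e+2\geq3$, \eqref{Vratio} bounds the $\psi$-ratio by $A=\zeta(2)/\zeta(3)$.

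For the $\chi$-factor, $j\leq\lfloor D/2\rfloor$ gives $D\geq2r+2$, and the parity hypothesis of Lemma~\ref{ratios} holds. So \eqref{scaledratio} yields the bound $\zeta(2)\zeta(4)16^{-r}$, except when $f=3$ and $T_\chi=\varnothing$, where it yields $9^{-r}$. Thus $c_j/c_1\leq K\,16^{-(j-1)}$ with $K=A\zeta(2)\zeta(4)$, or $c_j/c_1\leq A\,9^{-(j-1)}$ in the exceptional case.

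Summing the left side of \eqref{chebtest} in the generic case gives at most
\[
K\Bigl(\tfrac{9}{64}+\tfrac1{12}\Bigr)\approx 2.44\cdot0.224<1.
\]
In the exceptional case it gives at most
\[
A\Bigl(\tfrac14+\tfrac{16}{45}\Bigr)\approx1.37\cdot0.61<1.
\]
In both cases Lemma~\ref{cheb} gives $D-2$ distinct zeros of $H_D$ on $\Sone$, and together with the $\e$ zeros at $1$ this yields at least $\deg R_k-2$ zeros on $\Sone$.

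The main obstacle is this ratio bound. One must track carefully that the $2\pi$ factors cancel and that the $M^{1-\e-2j}$ normalization absorbs $Q_\psi^{2r}$. One must also confirm that the index conditions of Lemma~\ref{ratios} ($u\geq3$ for $\psi$; $D\geq2r+2$ and the parity condition for $\chi$) hold for all $j$. Finally, the numerical constants $\zeta(2)\zeta(4)\zeta(2)/\zeta(3)$ need to be checked with rigorous rational bounds, using $\zeta(2)<5/3$, $\zeta(3)>9/8$, and $\zeta(4)=\pi^4/90<1.083$.
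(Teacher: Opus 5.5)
Your proposal is correct and is essentially the paper's proof. You factor $R_k=\delta(x-1)^{\e}H_D$, use \eqref{Bernabs} to get $c_{r+1}/c_1=(Q_\psi/(MQ_\chi))^{2r}\,\frac{V_{D-2r}(\chi)}{V_D(\chi)}\,\frac{V_{\e+2+2r}(\psi)}{V_{\e+2}(\psi)}$, bound this by $A\zeta(2)\zeta(4)16^{-r}$ (or by $A\,9^{-r}$ in the conductor-$3$ case) via Lemma~\ref{ratios}, verify \eqref{chebtest} with the paper's sums $43A\zeta(2)\zeta(4)/192$ and $109A/180$ to apply Lemma~\ref{cheb}, and handle $D\leq 3$ directly, exactly as the paper does.
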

\begin{proof}
For $r\geq1$ with $r+1\leq\lfloor D/2\rfloor$, applying the identity \eqref{Bernabs} in Lemma \ref{berL}  gives
\[
 \frac{c_{r+1}}{c_1}
 =\left(\frac{Q_\psi}{MQ_\chi}\right)^{2r}
 \frac{V_{D-2r}(\chi)}{V_D(\chi)}
 \frac{V_{\e+2+2r}(\psi)}{V_{\e+2}(\psi)}.
\]
Since $Q_\psi\mid M$, Lemma~\ref{ratios} yields
\[
 \frac{c_{r+1}}{c_1}\leq A \, \zeta(2) \,\zeta(4) \, 16^{-r},
\]
except in the conductor-$3$ case, where it gives $A \,9^{-r}$. Consequently the left side of \eqref{chebtest} is at most, respectively,
\begin{align*}
 A \, \zeta(2) \,\zeta(4) \,\left(\frac9{64}+\sum_{r\geq2}4^{-r}\right)
 &=\frac{43 \, A \, \zeta(2) \,\zeta(4) \,}{192}<\frac{1075}{1728}<1,\\
 A\left(\frac14+\sum_{r\geq2}(4/9)^r\right)
 &=\frac{109A}{180}<\frac{218}{243}<1.
\end{align*}
For these comparisons, we use the elementary integral bounds $\zeta(2)<5/3$, $\zeta(3)>9/8$, and $\zeta(4)<1+1/16+1/24=53/48<9/8$. Thus $A<40/27$ and $A \, \zeta(2) \,\zeta(4) \,<25/9$. \\

Thus, for $D\geq 4$, Lemma \ref{cheb} implies that $H_D$ has at least $D-2$ zeros on the unit-circle. For $D=2$, the bound $D-2=0$ is immediate. For $D=3$, reciprocity gives $H_D(-1)=0$. Hence the same conclusion holds for every $D\geq2$. By \eqref{Rdecomp}, this means that $R_k$ has at least $k-4$ zeros on $\Sone$ as the degree of $R_k = k-2$ with the absolute value of the leading coefficient being $\sum c_j>0$. If $D=0$ or $1$, a nonzero $H_D$ is a positive constant or a positive multiple of $1+x$, respectively.
\end{proof}

We complete the analysis of location of zeros of $R_k$ by studying its behaviour on the real axis. 
\begin{lemma}\label{Rreal}
For a nonzero polynomial $H_D$ in \eqref{Rdecomp}, there are no real zeros outside $\{1,-1\}$. The zero $-1$ occurs exactly when $D$ is odd, and is then simple. The zero $1$ occurs exactly when $c_0=0$, and is then double.
\end{lemma}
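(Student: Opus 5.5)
Since $H_D(x)=\sum_j c_jP_{D,j}(x)$ with $P_{D,j}(x)=(x-1)^{2j}\sum_{h=0}^{D-2j}x^h$ and all $c_j\geq0$, I will study the sign of each $P_{D,j}$ on the real line, region by region. The point is that for real $x$ the factor $(x-1)^{2j}$ is nonnegative and vanishes only at $x=1$. The geometric sum $g_n(x)=\sum_{h=0}^{n}x^h$ has only unimodular zeros, so it does not vanish on $\mathbb R\setminus\{\pm1\}$. Moreover, $g_n(x)=(x^{n+1}-1)/(x-1)$, so its sign for real $x\neq 1$ is the sign of $(x^{n+1}-1)/(x-1)$.

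\textbf{The region $x>0$, $x\neq 1$.} Here $g_n(x)>0$ for every $n\geq0$, so every $P_{D,j}(x)\geq0$, with strict positivity when $x\neq1$. Since $c_j>0$ for $j\geq1$ (and such $j$ exist when $D\geq2$), we get $H_D(x)>0$. When $D\in\{0,1\}$, only $j=0$ occurs, and $H_D=c_0 g_D$ is nonzero by hypothesis, hence positive for $x>0$.

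\textbf{The region $x<0$, $x\neq -1$.} The lengths $n=D-2j$ all have the parity of $D$. If $D$ is even, each $g_{D-2j}$ is positive on $x<0$: it equals $(1-x^{n+1})/(1-x)$ with $n+1$ odd, so $1-x^{n+1}>0$ and $1-x>0$. If $D$ is odd, each $g_{D-2j}$ has the form $(1+x)\,\tilde g(x^2)$ with $\tilde g$ a sum of positive terms, so all the $P_{D,j}$ carry the common factor $(1+x)$ times something positive. Thus
\[
H_D(x)=(1+x)^{\epsilon}\sum_j c_j (x-1)^{2j}\tilde g_j(x),\qquad \epsilon\in\{0,1\}\equiv D \pmod 2,
\]
with the sum strictly positive for all real $x\neq1$. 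This rules out zeros in $x<0$ other than $-1$, and also handles $x=0$, where the value is $\sum_j c_j>0$.

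\textbf{The points $\pm1$ and their multiplicities.} At $x=-1$ the sum above is still strictly positive, since each term with $c_j>0$ contributes $4^j c_j \tilde g_j(1)>0$. Hence $-1$ is a zero exactly when $D$ is odd, and it is then simple. At $x=1$ we have $H_D(x)=c_0 g_D(x)+(x-1)^2\sum_{j\geq1}c_j(x-1)^{2j-2}g_{D-2j}(x)$. If $c_0>0$, then $H_D(1)=c_0(D+1)>0$. If $c_0=0$, then $H_D$ is divisible by $(x-1)^2$, and the quotient at $1$ equals $c_1(D-1)>0$ whenever $D\geq2$; so $1$ is a double zero. If $D<2$ and $c_0=0$, then $H_D\equiv0$, which is excluded.

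\textbf{Main obstacle.} The only delicate step is keeping the parity factorisation consistent so that strict positivity survives at $x=-1$. This is routine once the common factor $(1+x)$ is pulled out.
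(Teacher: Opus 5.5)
Your proof is correct and is essentially the paper's argument. Both use positivity of the even-length geometric sums, extract the common factor $(1+x)$ when $D$ is odd, and read off the values $c_0(D+1)$ and $c_1(D-1)$ at $x=1$. Your separate treatment of $x>0$ is redundant, since the parity factorisation already gives strict positivity of the cofactor on all of $\mathbb{R}\setminus\{1\}$; note also the small notational slip between $\tilde g_j(x^2)$ and $\tilde g_j(x)$ in your display.
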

\begin{proof}
If $D$ is even, every geometric sum in $P_{D,j}(x)$ has even degree and is positive for every real $x$:
\[
 1+x+\cdots+x^{2h}=\frac{x^{2h+1}-1}{x-1}>0
\]
(with value $2h+1$ at $x=1$). Hence every summand is nonnegative and the $j=1$ summand in $H_D$ is positive for $x\ne1$ when present.\\

If $D$ is odd, factor
\[
 1+x+\cdots+x^{2h+1}=(1+x)(1+x^2+\cdots+x^{2h}).
\]
The quotient $H_D(x)/(1+x)$ is a sum of nonnegative terms, positive for $x\ne1$. It is positive at $x=-1$, so that zero is simple. The cases $D<2$ follow directly. Finally, at $x=1$, the $j=0$ term is $c_0(D+1)$. If it vanishes, $c_1>0$ and the $j=1$ term has exactly order two, while all subsequent terms have higher order.
\end{proof}
\begin{proof}[Proof of Theorem~\ref{intro:main-R-k}]
By Proposition~\ref{Rminus2}, at most two zeros can be off the circle. A nonreal off-circle zero of a real reciprocal polynomial forces the four distinct zeros $z,\bar z,z^{-1},\bar z^{-1}$. A real off-circle zero is excluded by Lemma~\ref{Rreal}. Thus all zeros are unimodular. The factor at $1$ has multiplicity $\e$ if $c_0>0$ and $\e+2$ otherwise. The zero $-1$ is present exactly when $\chi$ is odd, since $\chi(-1)\psi(-1) = (-1)^k$ gives $(-1)^D=\chi(-1)$. For completeness, the indices below $4$ are also immediate: $k=3$ gives a scalar multiple of $x-1$, of degree $1$, and $k=1,2$ give zero polynomials. This proves Conjecture \ref{intro:conjR}.
\end{proof}

\medskip 

\section{\bf Zeros of the polynomials \texorpdfstring{$S_{2n}(x;\chi,\chi)$}{S2n(x;chi,chi)}}

\medskip

Let $n\geq2$ and suppose $S_{2n}(x;\chi,\chi)\not\equiv0$.
By Lemma~\ref{berL}, there is a constant
$C_{\chi,n}\in\mathbb R\setminus\{0\}$ such that the
following reductions hold.\\

\noindent For even $\chi$, put
\begin{equation}\label{evenH}
H_d(y)=\sum_{r=0}^{d}
V_{2r+2}(\chi)V_{2d+2-2r}(\chi)y^r,
\qquad d=n-2.
\end{equation}
Then
\begin{equation}\label{Seven}
S_{2n}(x;\chi,\chi)=C_{\chi,n}xH_d(x^2).
\end{equation}
In particular,
\[
\deg S_{2n}=2n-3,
\qquad \operatorname{ord}_0S_{2n}=1.
\]
For odd $\chi$, put
\begin{equation}\label{oddH}
H_d(y)=\sum_{r=0}^{d}
V_{2r+1}(\chi)V_{2d-2r+1}(\chi)y^r,
\qquad d=n-1.
\end{equation}
Then
\begin{equation}\label{Sodd}
S_{2n}(x;\chi,\chi)=C_{\chi,n}H_d(x^2).
\end{equation}
If $V_1(\chi)>0$, then
\[
\deg S_{2n}=2n-2,
\qquad S_{2n}(0)\neq0.
\]
If $V_1(\chi)=0$, non-vanishing requires $d\geq2$, and
\[
H_d(y)=yK_{d-2}(y),\qquad K_{d-2}(0)>0.
\]
Hence
\[
\deg S_{2n}=2n-4,
\qquad \operatorname{ord}_0S_{2n}=2.
\]

\subsection{Even characters}

\begin{proposition}\label{evenzeros}
For $d\geq2$, the polynomial \eqref{evenH} satisfies
\[
\sigma_jF(\theta_j)>0\qquad(1\leq j\leq\lfloor d/2\rfloor).
\]
Consequently, it has at least $d-2$ distinct zeros on $\Sone$.
\end{proposition}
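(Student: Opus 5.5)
The plan is to put \eqref{evenH} in the form \eqref{Hform} and verify the sufficient condition of Lemma~\ref{angular} with the help of Lemma~\ref{handfold}. First normalize. The outer coefficient is $A=V_2(\chi)V_{2d+2}(\chi)$. The middle coefficients are $V_{2r+2}V_{2d+2-2r}$ for $1\leq r\leq d-1$, and they are symmetric under $r\mapsto d-r$. Since $\sigma_jF(\theta_j)$ does not involve $A$, we may divide $H_d$ by any positive constant without changing signs. I would divide by $V_{2d+2}(\chi)\,V_{2}(\chi)$? No: the natural choice is to write $a_r=v_rv_{d-r}$ with $v_r=V_{2r+2}(\chi)$. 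This is exactly the shape required by Lemma~\ref{handfold}. Note that $v_0=V_2$ enters only through $A$, which is irrelevant, and that every $v_r$ with $r\geq1$ has index $2r+2\geq4$.

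Next, invoke Lemma~\ref{evenV}. For $r\geq1$ it gives $|v_r-1|<2^{1-(2r+2)}=\tfrac12\,4^{-r}$, so the hypothesis of Lemma~\ref{handfold} holds with $c=1/2$ and $j=1$. This gives
\[
E_1=\tfrac12\cdot\tfrac14\left(\tfrac43+\tfrac49\right)=\tfrac29 .
\]
The auxiliary hypotheses also hold. For $1\leq r<d/2$ we have $d-r\geq1$, so $e_{d-r}\leq e_1=1/8$, and we may take $\eta=1/8$. When $d=2m$, $e_m\leq 1/8<1$. Lemma~\ref{handfold} then bounds the left side of \eqref{eventest} or \eqref{oddtest} by
\[
4E_1(1+\eta)=\tfrac89\cdot\tfrac98=1 .
\]

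This is the main obstacle: the crude bound lands exactly on $1$, while Lemma~\ref{angular} needs strict inequality. Strictness already comes from Lemma~\ref{evenV}, since $|v_r-1|<e_r$ strictly for every $r\geq1$. The sum in \eqref{eventest} or \eqref{oddtest} is nonempty because $d\geq2$ gives $m\geq1$ and the central or $r=1$ term is present, and $|a_r-1|\leq e_r+e_{d-r}+e_re_{d-r}$ becomes strict once the $e$'s are replaced by the actual deviations. Hence the bound is $<1$. For a safer margin, I would sharpen the proof of Lemma~\ref{evenV}: it actually gives $V_s-1<\tfrac{208}{107}2^{-s}$ and $1-V_s<\bigl(\tfrac{16}{15}+\tfrac{27}{40}\bigr)2^{-s}$. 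Using $c=\tfrac{1}{4}\cdot\tfrac{208}{107}\cdot\ldots$, that is, $c<1/2$, the bound becomes $4E_1(1+\eta)<1$ with room to spare.

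With the criterion satisfied, Lemma~\ref{angular} gives $\sigma_jF(\theta_j)>0$ for all $1\leq j\leq\lfloor d/2\rfloor$, and hence at least $d-2$ distinct zeros of $H_d$ on $\Sone$.
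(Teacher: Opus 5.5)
Your argument is correct and is essentially the paper's proof: you use Lemma~\ref{evenV} with $c=1/2$, then Lemma~\ref{handfold} with $j=1$ and $E_1=2/9$, then Lemma~\ref{angular}. The only difference is your choice $\eta=1/8$. It lands exactly on $1$, so you have to extract strictness from $|v_r-1|<e_r$, which is valid. The paper instead notes that $1\le r<d/2$ forces $d-r\ge 2$, so it can take $\eta=1/32$ and obtain the bound $\tfrac{11}{12}<1$ directly.
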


\begin{proof}
Set $v_r=V_{2r+2}(\chi)$. Lemma~\ref{evenV} gives
\[
|v_r-1|<\frac12\,4^{-r}\qquad(r\geq1).
\]
Apply Lemma~\ref{handfold} with
\[
j=1,\qquad c=\frac12,\qquad
E_1=\frac29,\qquad \eta=\frac1{32}.
\]
For $1\leq r<d/2$, we have $d-r\geq2$, and hence
\[
e_{d-r}=\frac12\,4^{-(d-r)}
\leq\frac12\,4^{-2}=\frac1{32}.
\]
Thus, we may take $\eta=1/32$ in Lemma~\ref{handfold}. This gives
\[
\left|
\frac{\sigma_jF(\theta_j)}{\cot(\theta_j/2)}-1
\right|
\leq4\cdot\frac29\left(1+\frac1{32}\right)
=\frac{11}{12}<1.
\]
The claim follows by Lemma~\ref{angular}.
\end{proof}

When $\chi$ is also given to be primitive non-principal, we have the following. 
\begin{proposition}\label{primitiveeven}
Let $\chi$ be a primitive real even non-principal character. Then, for every $n\geq2$, all nonzero zeros of $S_{2n}(x;\chi,\chi)$ lie on $\Sone$.
\end{proposition}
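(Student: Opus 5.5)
The plan is to reduce to the polynomial $H_d$ in $y=x^2$ and then invoke Lemma~\ref{endpoints}, so that everything comes down to the sign of the endpoint quantity $E$. I have only worked out the reduction. For the sign of $E$ I give a proposed route through the transformation law of the Eisenstein series, which I have not carried out; it is the step where primitivity has to be used.

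\textbf{Reduction.} By \eqref{Seven}, $S_{2n}(x;\chi,\chi)=C_{\chi,n}\,xH_d(x^2)$ with $d=n-2$ and $H_d$ as in \eqref{evenH}. So it suffices to show that every zero of $H_d$ in $y$ lies on $\Sone$. Indeed, if $|y|=1$ then both square roots $x$ of $y$ satisfy $|x|=1$.

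Since $\chi$ is primitive, $T_\chi=\varnothing$ and $V_m(\chi)=L(m,\chi)>0$. Hence $H_d$ has the shape required in Lemma~\ref{endpoints}:
\begin{itemize}
\item its coefficients $a_r=v_rv_{d-r}$, with $v_r=L(2r+2,\chi)$, are positive;
\item they are symmetric, $a_r=a_{d-r}$;
\item the outer coefficient is $A=v_0v_d>0$.
\end{itemize}
For $d\le 1$ there is nothing to do: $H_d$ is a positive constant, or a positive multiple of $1+y$. For $d\geq2$, Proposition~\ref{evenzeros} gives the node inequalities $\sigma_jF(\theta_j)>0$. Lemma~\ref{endpoints} then reduces the claim to showing $E\ge0$, where $E=H_d(-1)$ if $d$ is even and $E=H_d'(-1)$ if $d$ is odd.

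\textbf{Why primitivity is needed for $E$.} The coefficient estimates alone cannot decide the sign of $E$. For instance, if $\chi(2)=\chi(3)=-1$, then $v_0=L(2,\chi)$ can be noticeably less than $1$. The alternating sum $\sum(-1)^r v_rv_{d-r}$ is then governed by the boundary terms, whose sign is not forced by Lemma~\ref{evenV}. Theorem~\ref{eventualzeros} shows that for imprimitive even $\chi$ exceptional zeros really can occur. So the argument must use the functional equation, which is available only in the primitive case.

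\textbf{Proposed route for the sign.} The value $y=-1$ corresponds to $x=\pm i$, the fixed points of $X\mapsto -1/X$ (note $L=M=f$). Through
\[
r_{k;\chi,\chi}(X)=\text{(nonzero constant)}\cdot S_{2n}(X;\chi,\chi),
\]
$E$ becomes the value, or the first derivative, of the Eisenstein period polynomial at $X=i$. The idea is to split the period integral $\int_0^{i\infty}$ at $\tau=i$. On the lower half, substitute $\tau\mapsto-1/\tau$ and use $\tau^{-k}E_k(-1/\tau;\chi,\chi)=\chi(-1)E_k(\tau;\chi,\chi)$. This should express $r(i)$, or the relevant derivative, as
\[
\int_i^{i\infty}E_k(\tau;\chi,\chi)\,\omega(\tau)\,d\tau,
\]
where $\omega(\tau)$ is an explicit polynomial factor such as $(\tau-i)^{k-2}$ plus its transform. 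On the imaginary axis, $\tau=it$ with $t\ge1$, this factor should have constant sign.

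Expanding $E_k(it;\chi,\chi)$ in its $q$-series $\sum_m \chi(m)\sum_{\ell}\chi(\ell)\ell^{k-1}e^{-2\pi m\ell t/f}$, the $m=1$ term dominates for $t\ge1$. Using the Gauss sum normalization from Lemma~\ref{berL}, I expect the integral to have the sign that gives $E>0$. This dominance bound will be made explicit by estimating the tail of the series with $D_1$, $D_2$ type sums. Justifying it, including the precise constant sign of $\omega$ and the correct orientation of the overall constants $C_{\chi,n}$ and $\delta$, is the main obstacle.

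\textbf{Fallback.} If this route stalls, I would try the other way round. For $f$ large, write $v_r=1+O(4^{-r})$ for $r\ge1$. Then show that the functional equation forces a compensating relation between $v_0=L(2,\chi)$ and $v_d$, for example via the Ramanujan-type identity of \cite{BS} at $\alpha=\beta=\pi$. That identity should express $E$ as a convergent positive Lambert series.
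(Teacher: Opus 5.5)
Your reduction is exactly the paper's setup: passing to $H_d$ via \eqref{Seven}, getting the node inequalities from Proposition~\ref{evenzeros}, and reducing to $E\geq0$ via Lemma~\ref{endpoints}. But the proposal stops where the content of the statement begins. The sign of $E$ is left as an unexecuted plan, so there is a genuine gap.

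Your diagnosis of that step is also mistaken. Primitivity is not needed for a functional equation. It matters because $V_2(\chi)=L(2,\chi)$ then carries no factors $1-\chi^*(p)p^{-1}$; such factors can be small (already $1-\frac12$ at $p=2$), which is why the argument does not extend to imprimitive even characters. For primitive $\chi$ the Euler product gives, whatever the values $\chi(2),\chi(3),\dots$,
\[
L(2,\chi)\geq\prod_p(1+p^{-2})^{-1}=\frac{\zeta(4)}{\zeta(2)}=\frac{\pi^2}{15}>\frac{13}{20}.
\]
Also $|L(2r+2,\chi)-1|\leq\zeta(2r+2)-1\leq4^{1-r}(\zeta(4)-1)<\frac13\,4^{-r}$ for $r\geq1$. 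Write $H_d(y)=A(1+y^d)+\sum_{r=1}^{d-1}(1+\delta_r)y^r$. These bounds give $A>611/960$ and $\sum_{r=1}^{d-1}|\delta_r|\leq 11/48$. Hence $H_d(-1)\geq 2A-1-\frac{11}{48}>0$ for even $d$. For odd $d$, pairing $r$ with $d-r$ via $\delta_r=\delta_{d-r}$ gives $H_d'(-1)\geq dA-\frac{d-1}{2}-\frac{11d}{96}>0$. So plain coefficient estimates do decide the sign. You only need this direct lower bound for $L(2,\chi)$, since Lemma~\ref{evenV} covers only $s\geq3$.

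The route you sketch would not close the gap as described. Integrate the $q$-expansion term by term against $(t-1)^{k-2}$ over $t\geq1$. For even $d$ this turns the endpoint value into a nonzero multiple of
\[
\sum_{m\geq1}\chi(m)m^{1-k}\sum_{\ell\geq1}\chi(\ell)e^{-2\pi m\ell/f}.
\]
The terms with $m\geq2$ are damped by $m^{1-k}$, but the sign then rests on the Fekete-type value $\sum_{\ell\geq1}\chi(\ell)e^{-2\pi\ell/f}$. For large $f$, all terms with $\ell$ up to about $f$ carry comparable weight. No tail estimate with $D_1$, $D_2$-type sums can fix its sign uniformly in $f$. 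For odd primitive characters the analogous Fekete value governs $I_\chi$ (Proposition~\ref{Feketerelation}) and can be negative. Poisson summation rewrites the even value as $\frac{\sqrt f}{\pi}\sum_{\nu\geq1}\chi(\nu)/(\nu^2+1)$. Even there, the leading term $\frac12$ is smaller than $\sum_{\nu\geq2}(\nu^2+1)^{-1}\approx0.577$. So positivity genuinely needs the multiplicative Euler-product input above, not term dominance. Your fallback, a ``compensating relation'' between $v_0$ and $v_d$ from the functional equation, is likewise unnecessary: the lower bound on $L(2,\chi)$ alone suffices.
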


\begin{proof}
Since $\chi$ is primitive, $V_s(\chi)=L(s,\chi)$.
By \eqref{Seven}, it suffices to consider
\[
H_d(y)=\sum_{r=0}^{d}
L(2r+2,\chi)L(2d+2-2r,\chi)y^r,
\qquad d=n-2.
\]
For $d=0,1$, the assertion is immediate. Assume $d\geq2$.
The proof of Proposition~\ref{evenzeros} gives
$\sigma_jF(\theta_j)>0$. We verify the endpoint
inequalities of Lemma~\ref{endpoints}. The Euler product gives
\[
L(2,\chi)\geq
\prod_p(1+p^{-2})^{-1}
=\frac{\zeta(4)}{\zeta(2)}
=\frac{\pi^2}{15}>\frac{13}{20}.
\]
Also,
\[
\zeta(4)-1
<\frac1{12}.
\]
Thus, for $r\geq1$,
\[
|L(2r+2,\chi)-1|
\leq4^{1-r}(\zeta(4)-1)
<\frac13\,4^{-r}.
\]

Write
\[
H_d(y)=A(1+y^d)
+\sum_{r=1}^{d-1}(1+\delta_r)y^r.
\]
Since $d\geq2$,
\[
A=L(2,\chi)L(2d+2,\chi)
>\frac{13}{20}\left(1-\frac1{48}\right)
=\frac{611}{960}.
\]
Moreover,
\[
|\delta_r|
\leq\frac13\,4^{-r}
+\frac13\,4^{-(d-r)}
+\frac19\,4^{-d},
\qquad 1\leq r\leq d-1.
\]
Hence
\[
\sum_{r=1}^{d-1}|\delta_r|
\leq\frac23\sum_{r=1}^{\infty}4^{-r}
+\frac{d-1}{9}\,4^{-d}
\leq\frac29+\frac1{144}
=\frac{11}{48}.
\]

If $d$ is even, then
\[
H_d(-1)
\geq2A-1-\frac{11}{48}
>\frac7{160}>0.
\]
If $d$ is odd, reciprocity gives
$\delta_r=\delta_{d-r}$, and therefore
\[
\left|
\sum_{r=1}^{d-1}r\delta_r(-1)^{r-1}
\right|
\leq\frac d2\sum_{r=1}^{d-1}|\delta_r|
\leq\frac{11d}{96}.
\]
Consequently,
\[
H_d'(-1)
\geq dA-\frac{d-1}{2}-\frac{11d}{96}
>\frac{7d}{320}+\frac12>0.
\]
The claim now follows from Lemma~\ref{endpoints}.
\end{proof}

\subsection{Odd characters with
\texorpdfstring{$V_1(\chi)=0$}{V1(chi)=0}}

\begin{proposition}\label{handzeroouter}
If $V_1(\chi)=0$, every nonzero zero of \eqref{oddH} lies on
$\Sone$.
\end{proposition}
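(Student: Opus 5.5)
When $V_1(\chi)=0$, the outer coefficients of \eqref{oddH} vanish, so $H_d(y)=yK_{d-2}(y)$ with
\[
K_{d-2}(y)=\sum_{r=1}^{d-1}V_{2r+1}(\chi)V_{2d-2r+1}(\chi)y^{r-1}.
\]
This is a reciprocal polynomial of degree $d'=d-2$. Its coefficients are
\[
b_s:=V_{2s+3}(\chi)V_{2d'-2s+3}(\chi)\qquad (0\le s\le d').
\]
It suffices to show that all zeros of $K_{d'}$ lie on $\Sone$. The cases $d'=0,1$ are immediate: a positive constant, or a positive multiple of $1+y$. So I assume $d'\ge2$ and verify the hypotheses of Lemma~\ref{angular} and Lemma~\ref{endpoints} for $K_{d'}$, with $A=b_0$ and $a_r=b_r$.

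\textbf{Chebyshev nodes.} Put $v_r=V_{2r+3}(\chi)$, so $a_r=v_rv_{d'-r}$, and note $v_0=V_3(\chi)$. For $r\ge1$, \eqref{handWgeneral} gives
\[
|v_r-1|=|V_{2(r+1)+1}(\chi)-1|<\tfrac{7}{10}4^{-r-1}=\tfrac{7}{40}4^{-r}.
\]
I apply Lemma~\ref{handfold} with $j=1$ and $c=7/40$, so that $E_1=c\cdot\frac{4}{9}\cdot\frac{4}{3}\cdot\frac34\ldots$; concretely $E_1=c\,4^{-1}(4/3+4/9)=\frac{4c}{9}=\frac{7}{90}$. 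For $1\le r<d'/2$ we have $d'-r\ge2$, so $e_{d'-r}\le c/16$ and $\eta$ is tiny. This yields $4E_1(1+\eta)<1/3$, comfortably below $1$. Lemma~\ref{angular} therefore gives $\sigma_jF(\theta_j)>0$ and at least $d'-2$ zeros on $\Sone$.

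\textbf{Endpoint condition.} This is the main step. I need $E=K(-1)>0$ for $d'$ even and $E=K'(-1)>0$ for $d'$ odd. Write $K(y)=A(1+y^{d'})+\sum_{r=1}^{d'-1}(1+\delta_r)y^r$. The outer coefficient is
\[
A=V_3(\chi)\,V_{2d'+3}(\chi).
\]
From Lemma~\ref{oddintervals}, in every case $V_3(\chi)>68/81$. The other factor satisfies $V_{2d'+3}(\chi)>1-\frac{7}{10}4^{-d'-1}\ge 1-\frac{7}{640}$. Hence $A>0.83$, and in particular $2A-1>0.66$. For the middle coefficients, the same bounds give
\[
|\delta_r|\le\tfrac{7}{40}\bigl(4^{-r}+4^{-(d'-r)}\bigr)+\bigl(\tfrac{7}{40}\bigr)^2 4^{-d'},
\]
so $\sum_r|\delta_r|\le\frac{7}{60}+o(1)<0.15$.

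For $d'$ even, this gives $K(-1)\ge 2A-1-\sum|\delta_r|>0$; here the alternating sum of the ones contributes $-1$, as in Proposition~\ref{primitiveeven}. For $d'$ odd, I mimic the derivative computation there and use reciprocity of $\delta_r$. This gives
\[
K'(-1)\ge d'A-\tfrac{d'-1}{2}-\tfrac{d'}{2}\sum|\delta_r|,
\]
which is positive since $A-\frac12-0.075>0$. Lemma~\ref{endpoints}(i) then puts all zeros of $K$ on $\Sone$.

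The main obstacle is the sign check at $-1$. It works only because $V_3(\chi)$ is bounded well above $1/2$, and the weakest case is (iii) of Lemma~\ref{oddintervals}, where $V_3(\chi)>68/81$. I would double-check the constants there, including the very small-degree cases $d'=2,3$. If needed, for $d'=2$ I would compute $K(-1)=2A-a_1$ directly, using that $a_1=V_5(\chi)^2<1$ in case (iii) and is at most about $(1+0.011)^2$ otherwise.
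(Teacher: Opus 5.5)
Your overall scheme is the paper's: factor $H_d=yK_{d-2}$, use Lemma~\ref{handfold} with Lemma~\ref{angular} at the nodes, then decide the sign of $K(-1)$ or $K'(-1)$ via Lemma~\ref{endpoints}. The gap is that the coefficient bounds you feed into it come from a lemma that does not apply here.

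\textbf{Why Lemma~\ref{oddintervals} is unavailable.} That lemma assumes $V_1(\chi)>0$, and its proof uses this hypothesis. The lower bound $1-p^{-s}$ for local factors, and the list of possible $\lambda_r$, both exclude primes $p\in T_\chi$ with $\chi^*(p)=1$. For such a prime the factor is $(1-p^{1-s})/(1-p^{-s})$, which is smaller than $1-p^{-s}$. But $V_1(\chi)=0$ means precisely that such a prime exists. So in the present case you may not quote \eqref{handWgeneral} or $V_3(\chi)>68/81$.

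\textbf{The bound $V_3(\chi)>68/81$ is in fact false here.} Take the odd character $\chi$ modulo $14$ induced by $\chi^*=\left(\frac{-7}{\cdot}\right)$. Then $\chi^*(2)=1$, so $V_1(\chi)=0$, and $V_3(\chi)=\frac34 L(3,\chi^*)\approx 0.82<68/81$. More generally, $V_3(\chi)$ can approach $\zeta(3)/\zeta(2)\approx 0.73$ in this case. Consequently your bounds $A>0.83$ and $2A-1>0.66$ are unjustified. Your remark that case (iii) is the weakest case is also beside the point, since the weak configurations here are the primes $p\in T_\chi$ with $\chi^*(p)=1$.

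\textbf{The repair.} Use Lemma~\ref{evenV}, which holds for every real nonprincipal $\chi$. It gives $V_3(\chi)>27/40$ and $|V_{2r+3}(\chi)-1|<\frac14\,4^{-r}$ for $r\geq1$. With these bounds:
\begin{itemize}
\item Lemma~\ref{handfold} with $c=1/4$ bounds the normalized node error by $\frac49\bigl(1+\frac1{16}\bigr)=\frac{17}{36}<1$.
\item The outer coefficient satisfies $A>\frac{27}{40}\cdot\frac{15}{16}=\frac{81}{128}$.
\item The middle coefficients satisfy $|\delta_r|\le 4^{-(r+1)}+4^{-(D+1-r)}+4^{-(D+2)}$, so $\sum_r|\delta_r|\le\frac16+\frac1{256}=\frac{131}{768}$.
\end{itemize}
Your endpoint inequalities then still close, with smaller margins. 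For even $D$, $K(-1)\ge 2A-1-\frac{131}{768}>\frac{73}{768}$. For odd $D$, $K'(-1)\ge D\bigl(A-\frac12-\frac{131}{1536}\bigr)+\frac12>\frac{73D}{1536}+\frac12$. With this substitution your argument becomes exactly the paper's proof.
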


\begin{proof}
Write
\[
H_d(y)=yK_D(y),\qquad D=d-2,
\qquad
K_D(y)=\sum_{r=0}^{D}
V_{2r+3}(\chi)V_{2D+3-2r}(\chi)y^r.
\]
For $D=0,1$,
\[
K_0=V_3(\chi)^2,\qquad
K_1=V_3(\chi)V_5(\chi)(1+y),
\]
so the assertion is immediate. Suppose $D\geq2$.\\

By Lemma~\ref{evenV},
\[
V_3(\chi)>\frac{27}{40},
\qquad
|V_{2r+1}(\chi)-1|<4^{-r}\quad(r\geq2).
\]
Apply Lemma~\ref{handfold} to $v_r=V_{2r+3}(\chi)$ with
$j=1$, $c=1/4$ and $\eta=1/16$. The normalized error is
at most
\[
4\cdot\frac19\left(1+\frac1{16}\right)
=\frac{17}{36}<1.
\]
Thus $K_D$ satisfies the strict sign inequalities of Lemma~\ref{angular}, and has at least $D-2$ distinct zeros on the unit circle.\\

\noindent To analyse the location of the remaining zeros, write
\[
K_D(y)=A(1+y^D)+\sum_{r=1}^{D-1}(1+\delta_r)y^r.
\]
Since $A=V_3(\chi)V_{2D+3}(\chi)$ and $D\geq2$,
Lemma~\ref{evenV} gives
\[
A>\frac{27}{40}\bigl(1-4^{-(D+1)}\bigr)
>\frac{27}{40}\cdot\frac{15}{16}
=\frac{81}{128}.
\]
Moreover, for $1\leq r\leq D-1$, Lemma~\ref{evenV} gives
\[
|\delta_r|
=\bigl|V_{2r+3}(\chi)V_{2D+3-2r}(\chi)-1\bigr|
\leq4^{-(r+1)}+4^{-(D+1-r)}+4^{-(D+2)}.
\]
Summing and using
$\sum_{\nu=2}^{\infty}4^{-\nu}=1/12$, we obtain
\[
\sum_{r=1}^{D-1}|\delta_r|
\leq2\sum_{\nu=2}^{D}4^{-\nu}
+(D-1)4^{-(D+2)}
\leq\frac16+\frac1{256}
=\frac{131}{768},
\]
since $(D-1)4^{-(D+2)}$ decreases for $D\geq2$ and
equals $1/256$ at $D=2$.
If $D$ is even,
\[
K_D(-1)\geq2A-1-\frac{131}{768}
>\frac{73}{768}>0.
\]
If $D$ is odd, $\delta_r=\delta_{D-r}$ gives
\[
\left|
\sum_{r=1}^{D-1}r\delta_r(-1)^{r-1}
\right|
\leq\frac D2\sum_{r=1}^{D-1}|\delta_r|
\leq\frac{131D}{1536}.
\]
Therefore
\[
K_D'(-1)
\geq D\left(A-\frac12-\frac{131}{1536}\right)+\frac12
>\frac{73D}{1536}+\frac12>0.
\]
Lemma~\ref{endpoints} therefore implies that every zero of $K_D$
lies on $\Sone$.
\end{proof}

\subsection{Odd characters with
\texorpdfstring{$V_1(\chi)>0$}{V1(chi)>0}}

\begin{proposition}\label{oddzeros}
If $V_1(\chi)>0$ and $d\geq2$, the polynomial \eqref{oddH} satisfies
\[
\sigma_jF(\theta_j)>0\qquad(1\leq j\leq\lfloor d/2\rfloor).
\]
Consequently, it has at least $d-2$ distinct zeros on $\Sone$.
\end{proposition}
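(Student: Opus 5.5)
The plan follows the pattern of Propositions~\ref{evenzeros} and~\ref{handzeroouter}, with $v_r=V_{2r+1}(\chi)$, so that $a_r=v_rv_{d-r}$ for $1\leq r\leq d-1$. The coefficients in \eqref{oddH} are then $a_r$ in the middle and $A=V_1(\chi)V_{2d+1}(\chi)$ at the ends. The outer coefficient $A$ drops out of the node expressions \eqref{nodeS}. That matters here, because $V_1(\chi)$ may be far from $1$, and it means only the middle coefficients have to be controlled. The aim is to verify \eqref{eventest} or \eqref{oddtest} through Lemma~\ref{handfold}, and then conclude with Lemma~\ref{angular}.

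\textbf{Coefficient bounds.} Lemma~\ref{oddintervals} gives $|v_r-1|<\frac7{10}4^{-r}$ for $r\geq2$. The index $r=1$ (that is, $V_3$) needs separate care. Lemma~\ref{oddintervals} gives only $V_3(\chi)>68/81$ in the worst case, and $V_3\leq\zeta(3)/\zeta(6)$, roughly $1.18$, from above. So $|v_1-1|$ can be about $0.18$, which is larger than $c/4$ for $c=7/10$. I would therefore apply Lemma~\ref{handfold} with $c=\frac7{10}$ starting from $j=2$, and treat the $r=1$ term (equivalently $r=d-1$) by hand.

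\textbf{Estimate at $j=2$.} We have $E_2=c\,4^{-2}(8/3+4/9)=\frac{7}{10}\cdot\frac{28}{144}$, which is about $0.136$, so $4E_2$ is about $0.544$. The parameter $\eta$ bounds $e_{d-r}$ for $r<d/2$; this is at most $\frac7{10}4^{-3}$, about $0.011$, once $d-r\geq3$. This gives a contribution of about $0.55$ from $r\geq2$.

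\textbf{The $r=1$ term.} The weight in \eqref{oddtest} is $4\cdot1\cdot|a_1-1|$, where $a_1=V_3V_{2d-1}$. For this to fit, $|a_1-1|$ must be below about $0.11$. The crude bound $|a_1-1|$ of about $0.18$ would therefore fail.

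\textbf{Main obstacle: sharper case analysis.} I expect the $r=1$ term to be the main difficulty, and I would handle it with the case split of Lemma~\ref{oddintervals}. In cases (i) and (ii), $V_3>17/18$, so $|V_3-1|$ is at most about $0.06$ on the lower side; on the upper side one should use $U_1<e^{D_2(3)}$ together with $\lambda_1$, which needs a tighter bound. In case (iii), $V_3<1$ and $V_{2r+1}<1$ for all $r\geq2$, so Lemma~\ref{handfold}'s improved bound $4E_j$ (no $\eta$) applies when $0<v_r\leq1$. The remaining deficit $1-V_3<13/81$ still needs a sharper estimate than Lemma~\ref{oddintervals}.

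\textbf{Fallback: exact node expression.} If the crude norm test does not close, I would exploit the structure of \eqref{nodeS} directly. The $r=1$ term enters as $2(a_1-1)\sin\theta_j$, and dividing by $\cot(\theta_j/2)$ gives $2(a_1-1)\sin\theta_j\tan(\theta_j/2)=4(a_1-1)\sin^2(\theta_j/2)$. This is the term Lemma~\ref{angular} bounds by $4|a_1-1|$, and it is sharp only near $\theta=\pi$. There I would hope either that the deviations carry a consistent sign in each case, so that they partially cancel, or that $\sin^2(\theta_j/2)$ can be paired with the $r=2$ term via $|\sin 2\theta|\leq 2\sin\theta$. Small $d$ ($d=2,3$) are trivial or direct.
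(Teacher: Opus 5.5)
Your setup is right: the outer coefficient drops out, Lemma~\ref{handfold} from index $2$ with $c=7/10$ costs about $0.55$, and the $r=1$ term is the obstacle. But the route you propose for that term cannot work, so there is a genuine gap.

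\textbf{Why the proposed route fails.} You stay inside the absolute-value criterion \eqref{oddtest}/\eqref{eventest} and look for sharper two-sided bounds on $V_3(\chi)$. No such bounds exist, because the criterion itself is false for admissible characters.
\begin{itemize}
\item For $\chi_{163}$ itself (case (iii)) one has $L(3,\chi_{163})\approx0.846$, $L(5,\chi_{163})\approx0.965$ and $L(7,\chi_{163})\approx0.992$. Hence $4\sum_{r\geq1}r\,(1-L(2r+1,\chi_{163}))\approx1.04$, and the left sides of \eqref{eventest} and \eqref{oddtest} exceed $1$ for all large $d$. The bound $68/81$ of Lemma~\ref{oddintervals} is already nearly sharp here.
\item Inducing $\chi_{163}$ to modulus $2\cdot3\cdot5\cdot7\cdot11\cdot13\cdot163$ puts you in case (i) with $V_3\approx1.27$, so $4|a_1-1|>1$ on its own. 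In particular your bound $V_3\leq\zeta(3)/\zeta(6)$ is false for imprimitive $\chi$.
\item Inducing instead to $3\cdot5\cdot7\cdot11\cdot13\cdot163$ stays in case (iii) but gives $V_3\approx1.01$. So $V_3<1$ is not available in case (iii).
\item The pairing you suggest via $|\sin2\theta|\leq2\sin\theta$ is just the $r=2$ instance of $|\sin r\theta|\leq r\sin\theta$, which is already built into Lemma~\ref{angular}. It gains nothing.
\end{itemize}

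\textbf{What is missing: sign information.} Your fallback gestures at this, but it has to be used concretely. Keep the exact identity
$\sigma_jF(\theta_j)/\cot(\theta_j/2)=1+4x(a_1-1)+R_j$, where $x=\sin^2(\theta_j/2)\in(0,1)$. Here $R_j$ collects the terms with $r\geq2$, and Lemma~\ref{handfold} from index $2$ gives $|R_j|\leq 8183/14400$.

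The weight $4x$ is positive, so $a_1>1$ only helps, and only a \emph{lower} bound on $a_1$ is needed. This settles case (i) at once, since there $a_1>1$. It also settles case (ii): there $a_1>\frac{17}{18}\cdot\frac{199}{200}$, and $1-\frac{217}{900}-\frac{8183}{14400}>0$.

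In case (iii), using $a_1$ one-sidedly is still not enough. There $4(1-a_1)$ can be about $0.62$, while the $r\geq2$ contribution is about $0.42$. So you must also extract the $r=2$ term exactly:
\begin{itemize}
\item One has $2\sin(2\theta_j)\tan(\theta_j/2)=8x(1-2x)$ and $0<1-a_2<1/20$.
\item Since $a_2<1$, the term $8x(1-2x)(a_2-1)$ is nonnegative for $x\geq1/2$. That is exactly where the $r=1$ loss $4x(1-a_1)<2/3$ is largest.
\item For $x\leq1/2$, the $r=1$ loss is below $1/3$, and the $r=2$ term costs at most $1/20$.
\item The tail $r\geq3$ is below $1/6$ by the $\eta$-free form of Lemma~\ref{handfold}, since $0<V_{2r+1}<1$ for $r\geq2$.
\end{itemize}
This split needs $d\geq5$. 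For $d=4$ in case (iii), $a_2=V_5^2$ is the central coefficient, so you must check $F(\theta_1)=a_2+\sqrt2\,a_1>0$ and $-F(\theta_2)=\sqrt2\,a_1-a_2>0$ directly, using $a_1>4/5$ and $a_2<1$.
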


\begin{proof}
Write
\[
a_r=V_{2r+1}(\chi)V_{2d-2r+1}(\chi).
\]
For $d=2,3$, \eqref{nodeS} gives respectively
$F(\theta_1)=a_1>0$ and
$F(\theta_1)=2a_1\sin(\pi/3)>0$.
Assume $d\geq4$.\\

Fix $1\leq j\leq\lfloor d/2\rfloor$ and put
$x=\sin^2(\theta_j/2)$. Subtracting the cotangent identity
from \eqref{nodeS} gives
\[
\frac{\sigma_jF(\theta_j)}{\cot(\theta_j/2)}
=1+4x(a_1-1)+R_j,
\]
where $R_j$ contains the terms involving $a_r-1$ for
$2\leq r\leq\lfloor d/2\rfloor$.
By Lemma~\ref{oddintervals},
\[
|V_{2r+1}(\chi)-1|<\frac7{10}4^{-r}\qquad(r\geq2).
\]
Applying Lemma~\ref{handfold} with initial index $2$,
$c=7/10$ and $\eta=7/160$, we obtain
\begin{equation}\label{handoddrest}
|R_j|
\leq4\cdot\frac{49}{360}
\left(1+\frac7{160}\right)
=\frac{8183}{14400}.
\end{equation}
Here $\eta=7/160$ is valid since $d-r\geq2$ for
$2\leq r<d/2$.\\

In case~(i) of Lemma~\ref{oddintervals}, $a_1>1$, and hence
\[
\frac{\sigma_jF(\theta_j)}{\cot(\theta_j/2)}
\geq1-\frac{8183}{14400}>0.
\]
In case~(ii),
\[
a_1>\frac{17}{18} \, \cdot \, \frac{199}{200}
=\frac{3383}{3600}.
\]
Since $0<x<1$, it follows that
\[
\frac{\sigma_jF(\theta_j)}{\cot(\theta_j/2)}
>1-4\left(1-\frac{3383}{3600}\right)
-\frac{8183}{14400}
=1-\frac{259}{320}>0.
\]
It remains to consider case~(iii). If $d=4$, then
\[
a_1=V_3(\chi)V_7(\chi)>
\frac{68}{81} \, \cdot \,\frac{99}{100}>\frac45,
\qquad
0<a_2=V_5(\chi)^2<1.
\]
Equation \eqref{nodeS} gives
\[
F(\theta_1)=a_2+\sqrt2\,a_1>0,
\qquad
-F(\theta_2)=-a_2+\sqrt2\,a_1
>-1+\frac75 \, \cdot \,\frac45>0.
\]

Suppose $d\geq5$. By \eqref{handWnegative},
\[
a_1>\frac{68}{81}\, \cdot \, \frac{399}{400}>\frac56,
\qquad
0<1-a_2<
\frac{29}{50}\left(\frac1{16}+\frac1{64}\right)
<\frac1{20}.
\]
Separating the terms with $r=1,2$, we have
\[
\frac{\sigma_jF(\theta_j)}{\cot(\theta_j/2)}
=
1+4x(a_1-1)+8x(1-2x)(a_2-1)+R_j',
\]
where we used
\[
2\sin\theta_j\tan(\theta_j/2)=4x,
\qquad
2\sin(2\theta_j)\tan(\theta_j/2)=8x(1-2x).
\]
For $d=5$, $R_j'=0$. For $d\geq6$, the inequalities
\[
0<V_{2r+1}(\chi)<1,
\qquad
1-V_{2r+1}(\chi)<\frac{29}{50}4^{-r}
\quad(r\geq2)
\]
allow us to apply the last assertion of
Lemma~\ref{handfold} with initial index $3$. Thus
\[
|R_j'|
\leq4\cdot\frac{29}{50}
\sum_{r=3}^{\infty}r4^{-r}
=\frac{29}{180}<\frac16.
\]
If $x\geq1/2$, then
$8x(1-2x)(a_2-1)\geq0$, so
\[
\frac{\sigma_jF(\theta_j)}{\cot(\theta_j/2)}
>1-\frac23-\frac16=\frac16>0.
\]
If $0<x\leq1/2$, then $4x\leq2$ and
$0\leq8x(1-2x)\leq1$, giving
\[
\frac{\sigma_jF(\theta_j)}{\cot(\theta_j/2)}
>1-\frac13-\frac1{20}-\frac16
=\frac9{20}>0.
\]
Since $\cot(\theta_j/2)>0$, we have
$\sigma_jF(\theta_j)>0$ in every case.
Therefore, the claim is proved by Lemma~\ref{angular}.
\end{proof}

\subsection{Zeros of \texorpdfstring{$S_{2n}$}{S2n}}

\begin{theorem}\label{Szerolocation}
Let $\chi$ be real and nonprincipal, and suppose
$S_{2n}(x;\chi,\chi)\not\equiv0$. Then at most four
nonzero zeros lie off $\Sone$, counted with multiplicity.
If such zeros occur, they are simple and have the form
\[
\pm i\sqrt{\rho},\qquad
\pm i/\sqrt{\rho},
\qquad \rho>1.
\]
There are no nonzero real zeros.\\

If $\chi$ is odd and $V_1(\chi)=0$, every nonzero zero lies
on $\Sone$. Otherwise, for the reduced polynomial $H_d$
with $d\geq2$, put
\[
E=
\begin{cases}
H_d(-1),&d\text{ even},\\
H_d'(-1),&d\text{ odd}.
\end{cases}
\]
Then all nonzero zeros of $S_{2n}$ lie on $\Sone$ if and
only if $E\geq0$.
\end{theorem}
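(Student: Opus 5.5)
The plan is to assemble the theorem from the reductions \eqref{Seven} and \eqref{Sodd} together with Propositions~\ref{evenzeros}, \ref{handzeroouter}, \ref{oddzeros} and Lemma~\ref{endpoints}, with all analytic input already contained in those results.

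\textbf{Reduction.} In every case $S_{2n}(x;\chi,\chi)$ equals $C_{\chi,n}x^{a}H_d(x^2)$ with $a\in\{0,1\}$, or $C_{\chi,n}x^2K_{d-2}(x^2)$ when $\chi$ is odd and $V_1(\chi)=0$. So the nonzero zeros of $S_{2n}$ are exactly the square roots of the nonzero zeros of the reduced polynomial, and $|x|=1$ if and only if $|y|=1$ for $y=x^2$. The case $V_1(\chi)=0$ is then immediate from Proposition~\ref{handzeroouter}.

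\textbf{Small degrees.} In the remaining cases, if $d\le 1$ the reduced polynomial is a positive constant or a positive multiple of $1+y$. Then every nonzero zero is $\pm i$, on $\Sone$, and the claim is trivial. This is why the criterion with $E$ is only stated for $d\ge2$.

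\textbf{The case $d\ge2$.} The coefficients of $H_d$ are products $V_{\cdot}(\chi)V_{\cdot}(\chi)>0$, using $V_1(\chi)>0$ in the odd case. They are symmetric, $a_r=a_{d-r}$, and the outer coefficient is $A>0$, so $H_d$ has the form required in Lemma~\ref{endpoints}. Propositions~\ref{evenzeros} and \ref{oddzeros} supply the node conditions $\sigma_jF(\theta_j)>0$ for all $j$. Lemma~\ref{endpoints} then gives the dichotomy on $E$:
\begin{itemize}
\item if $E\ge0$, all zeros of $H_d$ lie on $\Sone$;
\item if $E<0$, the only zeros off $\Sone$ are $-\rho$ and $-\rho^{-1}$ with $\rho>1$, both simple.
\end{itemize}
This yields the ``if and only if''.

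\textbf{Translating back to $x$.} The solutions of $x^2=-\rho$ and $x^2=-1/\rho$ are $\pm i\sqrt\rho$ and $\pm i/\sqrt\rho$. These are four distinct points, and they are simple zeros of $S_{2n}$ because $y\mapsto x^2$ has nonzero derivative away from $x=0$. This gives the claimed quartet and the bound of at most four off-circle zeros, counted with multiplicity.

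\textbf{No nonzero real zeros.} A real $x\neq0$ gives $y=x^2>0$. But $H_d(y)>0$ for $y>0$, since all its coefficients are nonnegative and $A>0$; likewise $K_{d-2}(0)>0$ with nonnegative coefficients in the other case. So there are no nonzero real zeros.

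I expect no real obstacle: the only points needing care are verifying the positivity and reciprocity hypotheses of Lemma~\ref{endpoints} in each case and handling the degenerate small-$d$ cases.
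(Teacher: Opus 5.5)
Your proposal is correct and follows the paper's proof essentially step for step. The common route is: reduce via \eqref{Seven} and \eqref{Sodd}; dispose of the case $V_1(\chi)=0$ by Proposition~\ref{handzeroouter} and of reduced degree $\le1$ by reciprocity; obtain the $E$-dichotomy from Propositions~\ref{evenzeros} and \ref{oddzeros} together with Lemma~\ref{endpoints}; pull back under $y=x^2$; and exclude nonzero real zeros by positivity of the coefficients. Your justification of simplicity via $\frac{d}{dx}H_d(x^2)=2xH_d'(x^2)$ at $x\neq0$ is a small detail that the paper leaves implicit.
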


\begin{proof}
If the reduced degree is $0$ or $1$, reciprocity gives
a nonzero constant or a positive multiple of $1+y$.
For odd $\chi$, the case $V_1(\chi)=0$ follows from
Proposition~\ref{handzeroouter}.\\

In the remaining cases, Propositions~\ref{evenzeros}
and \ref{oddzeros}, together with Lemma~\ref{endpoints},
show that the only possible zeros of $H_d$ off $\Sone$
are a simple pair $-\rho,-\rho^{-1}$, occurring precisely
when $E<0$. Under $y=X^2$, these give
\[
X=\pm i\sqrt{\rho},\qquad X=\pm i/\sqrt{\rho}.
\]
Finally, the reduced polynomials have positive
coefficients, so they have no positive real zeros.
Equations \eqref{Seven} and \eqref{Sodd} therefore
exclude nonzero real zeros of $S_{2n}$.
\end{proof}

\begin{proof}[Proof of Theorem~\ref{intro:main-S-2n}]
For $n\geq2$, Theorem~\ref{Szerolocation} shows that
$S_{2n}(x;\chi,\chi)$ has at most four nonzero zeros
off $\Sone$. If such zeros occur, they are
\[
\pm i\sqrt{\rho},\qquad
\pm i/\sqrt{\rho},
\qquad \rho>1.
\]
For primitive real even $\chi$, Proposition~\ref{primitiveeven}
shows that every nonzero zero lies on $\Sone$. For $n=1$, the definition gives
$S_2(x;\chi,\chi)=B_{1,\chi}^2$, which is constant.
This completes the proof. 
\end{proof}


\medskip
\section{\bf The function \texorpdfstring{$\Phi_{\chi}(t)$}{Phi chi(t)} and its properties}

\medskip

Throughout this section and the next, $\chi$ is a real nonprincipal character
which is either even and imprimitive or odd with $B_{1,\chi}\neq0$.
We first study the arithmetic properties of $\Phi_\chi$ and
$I_\chi$. The cotangent expansion expresses $I_\chi/\pi$ as a
rational function of $e^{2\pi/Q}$ with algebraic coefficients;
Nesterenko's theorem then gives algebraic independence of
$I_\chi$ and $\pi$. We then use the endpoint estimates and local uniform
convergence to determine the zeros of $\Phi_\chi$ in the unit disk.

\begin{proof}[Proof of Theorem~\ref{arithmeticlimit}]
Put
\[
\alpha_r=
L(2r+\e,\chi^*)
\prod_{\substack{p\mid N\\p\nmid f}}
\bigl(1-\chi^*(p)p^{1-2r-\e}\bigr),
\qquad
\delta_r=\alpha_r-1.
\]
By Lemmas~\ref{evenV} and \ref{oddintervals}, we have
$\delta_r=O_\chi(4^{-r})$.
Thus $\sum_{r\geq0}\delta_r(-t)^r$ converges absolutely
for $|t|<4$, and
\[
\Phi_\chi(t)=\frac1{1+t}+\sum_{r=0}^{\infty}\delta_r(-t)^r
\]
is meromorphic in this disk, with a unique pole at $t=-1$,
which is simple and has residue $1$. In particular,
\[
I_\chi=\frac12+\sum_{r=0}^{\infty}(-1)^r\delta_r
\]
converges absolutely and is real. \\

We first prove that $I_\chi$ and $\pi$ are algebraically independent. We do not assume that $I_{\chi} \neq 0$. Set
$$
Q=f\prod_{\substack{p\mid N\\p\nmid f}}p,
\qquad
b(\nu)=\chi^*(\nu)
\prod_{\substack{p\mid N,\ p\nmid f\\p\mid\nu}}(1-p).
$$
Then $b$ is periodic modulo $Q$ and satisfies
$$
b(-\nu)=(-1)^\e b(\nu),
\qquad b(1)=1.
$$
Expanding the finite Euler product, we obtain
$$
L(s,\chi^*)
\prod_{\substack{p\mid N\\p\nmid f}}
\bigl(1-\chi^*(p)p^{1-s}\bigr)
=
\sum_{\nu=1}^{\infty}\frac{b(\nu)}{\nu^s}
\qquad(s>1).
$$
When $\e=1$, this identity also holds at $s=1$. Indeed,
$\chi^*$ is then odd, hence nonprincipal, and its Dirichlet
series converges at $s=1$. Expanding the product defining
$b(\nu)$, we obtain
$$
\sum_{\nu\le X}\frac{b(\nu)}{\nu}
=
\sum_{d\mid Q/f}\mu(d)\chi^*(d)
\sum_{m\le X/d}\frac{\chi^*(m)}m.
$$
Letting $X\to\infty$ therefore yields
$$
\sum_{\nu=1}^{\infty}\frac{b(\nu)}{\nu}
=
L(1,\chi^*)
\prod_{\substack{p\mid N\\p\nmid f}}
\bigl(1-\chi^*(p)\bigr).
$$
Since $b(1)=1$, it follows that
$$
\alpha_r-1
=
\sum_{\nu=2}^{\infty}
\frac{b(\nu)}{\nu^{2r+\e}}
\qquad(r\ge0).
$$
We separate the term $r=0$, whose series may converge only
conditionally when $\e=1$. Since $b$ is bounded,
$$
\sum_{r=1}^{\infty}\sum_{\nu=2}^{\infty}
\frac{|b(\nu)|}{\nu^{2r+\e}}
=
\sum_{\nu=2}^{\infty}
\frac{|b(\nu)|}{\nu^\e(\nu^2-1)}
<\infty.
$$
Thus the remaining sums may be interchanged. Using
$$
\sum_{r=1}^{\infty}(-1)^r\nu^{-2r}
=-\frac1{\nu^2+1},
$$
we obtain
$$
\begin{aligned}
I_\chi
&=\frac12+
  \sum_{\nu=2}^{\infty}\frac{b(\nu)}{\nu^\e}
  +\sum_{\nu=2}^{\infty}\frac{b(\nu)}{\nu^\e}
    \sum_{r=1}^{\infty}(-1)^r\nu^{-2r}\\
&=\frac12+
  \sum_{\nu=2}^{\infty}
  \frac{b(\nu)\nu^{2-\e}}{\nu^2+1}\\
&=\sum_{\nu=1}^{\infty}
  \frac{b(\nu)\nu^{2-\e}}{\nu^2+1}.
\end{aligned}
$$

Now, put $\vartheta=2\pi/Q$. We apply the
cotangent expansion
$$
\lim_{M\to\infty}\sum_{k=-M}^{M}\frac1{z+kQ}
=
\frac{\pi}{Q}\cot\left(\frac{\pi z}{Q}\right)
\qquad(z\notin Q\mathbb Z)
$$
at $z=a+i$. Taking real and imaginary parts gives
$$
\begin{aligned}
\lim_{M\to\infty}\sum_{k=-M}^{M}
\frac{a+kQ}{(a+kQ)^2+1}
&=
\frac{\pi}{Q}
\frac{\sin(a\vartheta)}{\cosh\vartheta-\cos(a\vartheta)},\\
\sum_{k\in\mathbb Z}\frac1{(a+kQ)^2+1}
&=
\frac{\pi}{Q}
\frac{\sinh\vartheta}{\cosh\vartheta-\cos(a\vartheta)}.
\end{aligned}
$$
Here we have used
$$
\cot(u+iv)
=
\frac{\sin(2u)-i\sinh(2v)}
     {\cosh(2v)-\cos(2u)}.
$$
Using the parity of $b$ and the fact that $b(0)=0$, we have
$$
2I_\chi
=
\lim_{M\to\infty}
\sum_{\nu=-MQ}^{MQ}
\frac{b(\nu)\nu^{2-\e}}{\nu^2+1}.
$$
Since $b(Q)=b(0)=0$, the multiples of $Q$ contribute nothing to the above sum.
For $1\le a\le Q-1$, the condition
$$
-MQ\le a+kQ\le MQ
$$
is equivalent to $-M\le k\le M-1$. Hence
$$
\sum_{\nu=-MQ}^{MQ}
\frac{b(\nu)\nu^{2-\e}}{\nu^2+1}
=
\sum_{a=1}^{Q-1}b(a)
\sum_{k=-M}^{M-1}
\frac{(a+kQ)^{2-\e}}{(a+kQ)^2+1}.
$$
Extending each inner sum to $k=M$ adds just the term
$$
b(a)\frac{(a+MQ)^{2-\e}}{(a+MQ)^2+1}.
$$
Since $Q$ is fixed and $b$ is bounded, the sum of the
absolute values of these added terms is
$O_\chi(M^{-\e})$, and therefore tends to zero.
Thus we may use the symmetric bounds $-M\le k\le M$
when taking the limit. Grouping by residue classes and using
the preceding identities, we derive
$$
I_\chi
=
\frac{\pi}{2Q}
\begin{cases}
\displaystyle
\sum_{a=1}^{Q}
\frac{b(a)\sin(a\vartheta)}{\cosh\vartheta-\cos(a\vartheta)},
&\e=1,\\[10pt]
\displaystyle
\sinh\vartheta\sum_{a=1}^{Q}
\frac{b(a)}{\cosh\vartheta-\cos(a\vartheta)},
&\e=2.
\end{cases}
$$
Therefore, we write $I_\chi/\pi=\mathcal R_\chi(e^\vartheta)$,
where
$$
\mathcal R_\chi(Z)=
\begin{cases}
\displaystyle
\frac1Q\sum_{a=1}^{Q}
\frac{b(a)Z\sin(a\vartheta)}
     {Z^2-2Z\cos(a\vartheta)+1},
&\e=1,\\[12pt]
\displaystyle
\frac1{2Q}\sum_{a=1}^{Q}
\frac{b(a)(Z^2-1)}
     {Z^2-2Z\cos(a\vartheta)+1},
&\e=2.
\end{cases}
$$
This rational function has algebraic coefficients, since $b(a)$, $\sin(a\vartheta)$
and $\cos(a\vartheta)$ are algebraic. Its denominators do not
vanish at $Z=e^\vartheta>1$.\\

First, we see that $\mathcal R_\chi$ is nonconstant. The denominator of the $a$-th term factors as
$$
Z^2-2Z\cos(a\vartheta)+1=(Z-e^{ia\vartheta})(Z-e^{-ia\vartheta}).
$$
Since $\vartheta=2\pi/Q$, this denominator vanishes at $Z=e^{i\vartheta}$
precisely when $a\equiv\pm1\pmod Q$. Thus, among
$1\le a\le Q$, only the terms with $a=1$ and $a=Q-1$
can have a pole there. These indices are distinct because
$Q>2$. Using $b(1)=1$, $b(Q-1)=(-1)^\e$ and
$\sin((Q-1)\vartheta)=-\sin\vartheta$, the sum of these two terms is
$$
\begin{cases}
\displaystyle
\frac{2Z\sin\vartheta}
     {Q(Z^2-2Z\cos\vartheta+1)},
&\e=1,\\[12pt]
\displaystyle
\frac{Z^2-1}
     {Q(Z^2-2Z\cos\vartheta+1)},
&\e=2.
\end{cases}
$$
Now $Q>2$ implies $0<\vartheta<\pi$, so $\sin\vartheta\neq0$ and
$e^{2i\vartheta}\neq1$. Hence, in either case, the numerator is
nonzero at $Z=e^{i\vartheta}$, while the denominator has a simple
zero there. All the remaining terms are holomorphic at
this point and therefore cannot cancel this pole.
Thus, the rational function is nonconstant. \\

By Nesterenko's theorem \cite{Nesterenko}, the numbers
$\pi$ and $e^\pi$ are algebraically independent. The same is therefore true of $\pi$ and $e^\vartheta$. 
Clearing denominators in
$\mathcal R_\chi(e^\vartheta)=I_\chi/\pi$, we obtain a
polynomial relation between $e^\vartheta$ and $I_\chi/\pi$.
This relation is nonzero,
even if $I_\chi=0$, since otherwise the rational function would be identically equal to the constant $I_\chi/\pi$. It follows that $e^\vartheta$ is algebraic over
$\overline{\mathbb Q}(I_\chi/\pi)$.
If $I_\chi$ were algebraic over $\overline{\mathbb Q}(\pi)$,
then so would be $I_\chi/\pi$, and hence also $e^\vartheta$.
This contradicts the algebraic independence of $\pi$
and $e^\vartheta$. Therefore $I_\chi$ and $\pi$ are algebraically
independent. In particular, $I_\chi$ is transcendental
and thus nonzero. This proves (i).\\

We next determine the zeros of $\Phi_\chi$ in $|t|<1$.
By \eqref{Seven} and \eqref{Sodd}, with $d=n-\e$,
\[
S_{2n}(x;\chi,\chi)
=C_{\chi,n}x^{\e-1}H_d(x^2),
\qquad
H_d(y)=\sum_{r=0}^{d}\alpha_r\alpha_{d-r}y^r,
\]
where $C_{\chi,n}\neq0$ and $\alpha_r>0$ for every $r\geq0$.
We determine the eventual signs of $H_d(-1)$ and
$H_d'(-1)$ by expanding $\alpha_r\alpha_{d-r}$.\\

For even $d$, symmetry gives
\[
\begin{aligned}
H_d(-1)
=1+2\sum_{r=0}^{d}(-1)^r\delta_r
  +\sum_{r=0}^{d}(-1)^r\delta_r\delta_{d-r}=2I_\chi+O_\chi\left( \frac{d}{4^{d}} \right).
\end{aligned}
\]
For odd $d$, the same expansion gives
\[
\begin{aligned}
\frac{H_d'(-1)}{d}=\frac{d+1}{2d}
 +\sum_{r=0}^{d}
   \left(1-\frac{2r}{d}\right)(-1)^r\delta_r
 +\frac1d\sum_{r=0}^{d}
   r(-1)^{r-1}\delta_r\delta_{d-r}=I_\chi+O_\chi\left( \frac{1}{d} \right),
\end{aligned}
\]
since $\sum_{r\geq0}r|\delta_r|<\infty$. Since $I_\chi\neq0$, both endpoint expressions have its sign
for all sufficiently large $d$. Propositions
\ref{evenzeros} and \ref{oddzeros}, together with
Lemma~\ref{endpoints}, show that $H_d$ has no zeros off
$\Sone$ when $I_\chi>0$, and exactly one zero in $|y|<1$
when $I_\chi<0$, for all sufficiently large $d$. \\

For $|t|<1$, we have
\[
\Phi_\chi(t)=\sum_{r=0}^{\infty}\alpha_r(-t)^r.
\]
Choose constants $C,M>0$ such that
$|\alpha_r-1|\leq C4^{-r}$ and $|\alpha_r|\leq M$ for all $r\geq0$.
For $1/4<R<1$,
\[
\begin{aligned}
\sup_{|t|\leq R}|H_d(-t)-\Phi_\chi(t)|
&\leq MC\sum_{r=0}^d4^{-(d-r)}R^r
   +M\sum_{r>d}R^r\\
&\leq \left(\frac{MC}{1-(4R)^{-1}}+
                  \frac{MR}{1-R}\right)R^d.
\end{aligned}
\]
Thus $H_d(-t)\to\Phi_\chi(t)$ locally uniformly on $|t|<1$. \\

If $I_\chi>0$, Hurwitz's theorem
shows that $\Phi_\chi$ has no zeros in $|t|<1$, since
$\Phi_\chi(0)=\alpha_0>0$. This proves the assertion (ii). \\

If $I_\chi<0$, then
\[
\Phi_\chi(0)=\alpha_0>0,\qquad \Phi_\chi(1)=I_\chi<0,
\]
so $\Phi_\chi$ has a zero $t_\chi\in(0,1)$.
\noindent Choose $\delta>0$ such that the closed disk
$|t-t_\chi|\leq\delta$ lies in $|t|<1$ and contains
no zero of $\Phi_\chi$ other than $t_\chi$.
Since $\Phi_\chi$ has no zeros on its boundary,
\[
m_\delta:=
\min_{|t-t_\chi|=\delta}|\Phi_\chi(t)|>0.
\]
Local uniform convergence gives, for all sufficiently
large $d$,
\[
\max_{|t-t_\chi|=\delta}
|H_d(-t)-\Phi_\chi(t)|<m_\delta.
\]
Rouch\'e's theorem therefore implies that $H_d(-t)$
and $\Phi_\chi(t)$ have the same number of zeros
in this disk, counted with multiplicity. By Lemma~\ref{endpoints}, $H_d(-t)$ has exactly one
zero in $|t|<1$ for all sufficiently large $d$.
It follows that $t_\chi$ is simple. If $\Phi_\chi$
had another zero in the unit disk, applying the same
argument on a disjoint disk around that zero would
give two zeros of $H_d(-t)$ in $|t|<1$, a contradiction.
Thus $t_\chi$ is unique.\\

It remains to prove the arithmetic assertion about $t_\chi$
when $I_\chi<0$.
For $0<t\leq1$, interchanging sums as in the argument about $I_{\chi}$, we obtain
\[
\Phi_\chi(t)=\sum_{\nu=1}^{\infty}
\frac{b(\nu)\nu^{2-\e}}{\nu^2+t}.
\]
Indeed, the $r\geq1$ terms are absolutely summable uniformly
for $0<t\leq1$, and
$\sum_{r\geq1}(-t/\nu^2)^r=-t/(\nu^2+t)$ for $\nu\geq2$.
We now apply the cotangent expansion at $a+i\sqrt t$ instead of
$a+i$ and obtain
\[
\Phi_\chi(t)=
\begin{cases}
\pi\mathcal R_\chi(e^{\vartheta\sqrt t}),&\e=1,\\[3pt]
\displaystyle\frac{\pi}{\sqrt t}
\mathcal R_\chi(e^{\vartheta\sqrt t}),&\e=2.
\end{cases}
\]
The denominators of $\mathcal R_\chi$ have all their zeros
on $\Sone$. Thus
$\mathcal R_\chi(e^{\vartheta\sqrt{t_\chi}})=0$ implies that
\[
\beta_\chi:=e^{\vartheta\sqrt{t_\chi}}
\]
is algebraic, since $\mathcal R_\chi$ is a nonzero rational
function with algebraic coefficients.
If $t_\chi$ were algebraic, then
\[
\beta_\chi=(-1)^{-2i\sqrt{t_\chi}/Q},
\]
with $\log(-1)=i\pi$, would be transcendental by the
Gelfond--Schneider theorem \cite[Chapter~X]{Niven}.
This contradiction proves that $t_\chi$ is transcendental,
and hence so is $\rho_\chi=t_\chi^{-1}$.
\end{proof}

For primitive odd characters, the infinite expression of $I_{\chi}$ above
can be written in terms of a value of a Fekete polynomial. This identifies
both the sign of $I_\chi$ and the algebraic number determining
$t_\chi$. The distribution of zeros of Fekete polynomials associated with quadratic characters of prime conductor was studied by Conrey, Granville, Poonen and Soundararajan \cite{CGPS}.

\begin{proposition}\label{Feketerelation}
Let $\chi$ be a primitive odd real character of conductor
$f$, and define
\[
P_\chi(z)=\sum_{a=1}^{f-1}\chi(a)z^a.
\]
For $0<t\leq1$,
\[
\Phi_\chi(t)=
\frac{\pi}{\sqrt f}\,
\frac{P_\chi(e^{-2\pi\sqrt t/f})}
     {1-e^{-2\pi\sqrt t}}.
\]
Consequently,
\[
I_\chi=
\frac{\pi}{\sqrt f}\, \left( 
\frac{P_\chi(e^{-2\pi/f})}{1-e^{-2\pi}} \right).
\]
If $I_\chi<0$, then $P_\chi$ has exactly one zero in $(e^{-2\pi/f},1)$, say $q_{\chi}$. Then $q_{\chi}$ is simple, and
\[
q_\chi=e^{-2\pi/(f\sqrt{\rho_\chi})}.
\]
\end{proposition}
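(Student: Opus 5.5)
The plan is to start from the series representation obtained in the proof of Theorem~\ref{arithmeticlimit}. For $0<t\leq1$ it gives
\[
\Phi_\chi(t)=\sum_{\nu\geq1}\frac{b(\nu)\nu^{2-\e}}{\nu^2+t}.
\]
For a primitive odd character we have $T_\chi=\varnothing$, $Q=f$, $\e=1$ and $b=\chi$. Writing $s=\sqrt t$, this becomes
\[
\Phi_\chi(t)=\sum_{\nu\geq1}\frac{\chi(\nu)\,\nu}{\nu^2+s^2}.
\]
Rather than simplifying the rational function $\mathcal R_\chi$ directly, I would expand $\chi$ in additive characters. Since $\chi$ is real, odd and primitive, $G(\chi)=i\sqrt f$ and
\[
\chi(\nu)=\frac{1}{G(\chi)}\sum_{a=1}^{f-1}\chi(a)e^{2\pi i a\nu/f}.
\]
Because $\chi(f-a)=-\chi(a)$, only the sine parts survive, and this gives
\[
\chi(\nu)=\frac{1}{\sqrt f}\sum_{a=1}^{f-1}\chi(a)\sin(2\pi a\nu/f).
\]
Substituting this into the series reduces $\Phi_\chi(t)$ to a finite linear combination of the classical Fourier series, valid for $0<x<2\pi$,
\[
\sum_{\nu\geq1}\frac{\nu\sin(\nu x)}{\nu^2+s^2}=\frac{\pi}{2}\,\frac{\sinh(s(\pi-x))}{\sinh(\pi s)}.
\]
Each series is only conditionally convergent. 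However, the original series is itself a finite linear combination of them, so the exchange of the finite sum over $a$ with the sum over $\nu$ is legitimate.

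Next I evaluate at $x_a=2\pi a/f$, put $q=e^{-2\pi s/f}$, and write
\[
\frac{\sinh(s(\pi-x_a))}{\sinh(\pi s)}=\frac{q^{a}-e^{-2\pi s}q^{-a}}{1-e^{-2\pi s}}.
\]
This yields
\[
\Phi_\chi(t)=\frac{\pi}{2\sqrt f\,(1-e^{-2\pi s})}\sum_{a=1}^{f-1}\chi(a)\bigl(q^a-e^{-2\pi s}q^{-a}\bigr).
\]
The key simplification is $e^{-2\pi s}q^{-a}=q^{f-a}$. Reindexing $a\mapsto f-a$ and using oddness of $\chi$, the second sum equals $-P_\chi(q)$. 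The two halves therefore combine to give
\[
\Phi_\chi(t)=\frac{\pi}{\sqrt f}\,\frac{P_\chi(q)}{1-e^{-2\pi s}}.
\]
Setting $t=1$ gives the stated formula for $I_\chi$.

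For the last assertion I use the map $t\mapsto q(t)=e^{-2\pi\sqrt t/f}$. It is a smooth strictly decreasing bijection from $(0,1)$ onto $(e^{-2\pi/f},1)$, with nonvanishing derivative. The prefactor $\pi/\bigl(\sqrt f(1-e^{-2\pi\sqrt t})\bigr)$ is positive and smooth on $(0,1]$, so zeros of $\Phi_\chi$ in $(0,1)$ correspond bijectively to zeros of $P_\chi$ in $(e^{-2\pi/f},1)$, with the same multiplicities. If $I_\chi<0$, Theorem~\ref{arithmeticlimit}(iii) says $\Phi_\chi$ has exactly one zero in $|t|<1$, namely $t_\chi\in(0,1)$, and that it is simple. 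Hence $P_\chi$ has exactly one zero $q_\chi=q(t_\chi)$ in that interval, and it is simple. Since $t_\chi=\rho_\chi^{-1}$, this gives
\[
q_\chi=e^{-2\pi/(f\sqrt{\rho_\chi})}.
\]

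I expect the main obstacle to be the justification of the Fourier-series step: the pointwise validity of the $\sinh$ formula at the nodes $x_a\in(0,2\pi)$, and the interchange with the conditionally convergent $\nu=1$-type tails. If this proves awkward, the fallback is to take the already established identity $\Phi_\chi(t)=\pi\mathcal R_\chi(e^{\vartheta\sqrt t})$ and verify algebraically that $\mathcal R_\chi(Z)$, with $Z=q^{-1}$, equals $P_\chi(1/Z)/\bigl(\sqrt f(1-Z^{-f})\bigr)$. That verification would proceed by partial fractions in $Z$ together with the Gauss sum evaluation.
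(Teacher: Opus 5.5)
Your proof is correct, but it runs the computation in the opposite direction from the paper.

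The paper starts from the cotangent evaluation already obtained in the proof of Theorem~\ref{arithmeticlimit}, namely $\Phi_\chi(t)=\frac{\pi}{2f}\sum_{a=1}^{f}\chi(a)\sin(2\pi a/f)\big/\bigl(\cosh(2\pi\sqrt t/f)-\cos(2\pi a/f)\bigr)$. It expands each kernel as the absolutely convergent series $\frac{\sin\theta}{\cosh u-\cos\theta}=2\sum_{m\geq1}e^{-mu}\sin(m\theta)$. It then uses the Gauss sum as $\sum_{a}\chi(a)\sin(2\pi ma/f)=\sqrt f\,\chi(m)$ to collapse the finite $a$-sum. This lands directly on $\frac{\pi}{\sqrt f}\sum_{m\geq1}\chi(m)q^m$, which periodicity turns into $P_\chi(q)/(1-q^f)$.

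You instead start from the series $\sum_{\nu}\chi(\nu)\nu/(\nu^2+s^2)$ and use the same Gauss sum identity in reverse, writing $\chi(\nu)$ as a finite sine sum. You then evaluate each resulting series by the closed form of $\sum_{\nu}\nu\sin(\nu x)/(\nu^2+s^2)$. You finish with the reflection $a\mapsto f-a$ and $e^{-2\pi s}q^{-a}=q^{f-a}$.

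The trade-off is as follows. The paper's route needs no reindexing and only interchanges a finite sum with absolutely convergent series, but it rests on the cotangent formula and its symmetric-limit justification. Your route bypasses $\mathcal R_\chi$ entirely, at the cost of importing a conditionally convergent Fourier series.

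The step you flag as the main obstacle is standard. The $2\pi$-periodic extension of $g(x)=\frac{\pi}{2}\sinh(s(\pi-x))/\sinh(\pi s)$ from $(0,2\pi)$ is odd, since $g(2\pi-x)=-g(x)$. Its sine coefficients $\frac1\pi\int_0^{2\pi}g(x)\sin(\nu x)\,dx$ equal $\nu/(\nu^2+s^2)$, and it is smooth at every interior node $x_a=2\pi a/f$. Dirichlet's pointwise convergence theorem therefore gives the identity at each $x_a$. Your partial-sum argument for exchanging the finite $a$-sum with the $\nu$-limit is then rigorous. Your treatment of the zeros, via the monotone map $t\mapsto e^{-2\pi\sqrt t/f}$ and the positive prefactor, matches the paper's.
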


To prove the proposition, we expand the trigonometric kernel
and use the finite Fourier transform of a primitive character.

\begin{proof}[Proof of Proposition~\ref{Feketerelation}]
Since $\chi$ is primitive and odd, $Q=f$ and $b(a)=\chi(a)$.
From the cotangent formula in the preceding proof, we obtain,
for $0<t\leq1$,
\[
\Phi_\chi(t)=\frac{\pi}{2f}
\sum_{a=1}^{f}
\frac{\chi(a)\sin(2\pi a/f)}
     {\cosh(2\pi\sqrt t/f)-\cos(2\pi a/f)}.
\]
For $u>0$,
\[
\frac{\sin\theta}{\cosh u-\cos\theta}
=2\sum_{m=1}^{\infty}e^{-mu}\sin(m\theta).
\]
The Gauss-sum identity for a primitive odd real character gives
\[
\sum_{a=1}^{f}\chi(a)e^{2\pi i ma/f}
=i\sqrt f\,\chi(m),
\qquad
\sum_{a=1}^{f}\chi(a)\sin(2\pi ma/f)
=\sqrt f\,\chi(m).
\]
Putting $q=e^{-2\pi\sqrt t/f}$, we therefore obtain
\[
\Phi_\chi(t)=\frac{\pi}{\sqrt f}\left(
\sum_{m=1}^{\infty}\chi(m)q^m \right)
=\frac{\pi}{\sqrt f} \left( \frac{P_\chi(q)}{1-q^f} \right),
\]
where the last equality follows by periodicity. Taking $t=1$
gives the formula for $I_\chi$.\\

If $I_\chi<0$, Theorem~\ref{arithmeticlimit} gives a unique
simple zero $t_\chi\in(0,1)$. The map
$t\mapsto e^{-2\pi\sqrt t/f}$ is a bijection from $(0,1)$
onto $(e^{-2\pi/f},1)$ with nonzero derivative. The positive
factor $\pi/(\sqrt f(1-q^f))$ does not vanish there.
Thus $P_\chi$ has exactly one zero in this interval, it is
simple, and
\[
q_\chi=e^{-2\pi\sqrt{t_\chi}/f}
=e^{-2\pi/(f\sqrt{\rho_\chi})}.
\]
This completes the proof. 
\end{proof}

\medskip 

\section{\bf A criterion for eventual occurrence of exceptional zeros}

\medskip

We now apply the endpoint estimates from the proof of
Theorem~\ref{arithmeticlimit} to determine the eventual
zeros of $S_{2n}$. We then use local uniform convergence
to identify the limits of the exceptional zeros and
estimate their rate of convergence.

\begin{proof}[Proof of Theorem~\ref{eventualzeros}]
Under the hypotheses of this theorem, we have $\alpha_0>0$. By
\eqref{Seven} and \eqref{Sodd}, with $d=n-\e$,
\[
S_{2n}(x;\chi,\chi)=C_{\chi,n}x^{\e-1}H_d(x^2),
\qquad
H_d(y)=\sum_{r=0}^{d}\alpha_r\alpha_{d-r}y^r,
\]
where $C_{\chi,n}\neq0$. The endpoint estimates in the proof
of Theorem~\ref{arithmeticlimit} give
\[
H_d(-1)=2I_\chi+O_\chi \left( \frac{d}{4^{d}} \right)\quad(d\text{ even}),
\qquad
\frac{H_d'(-1)}d=I_\chi+O_\chi\left( \frac{1}{d} \right)\quad(d\text{ odd}).
\]
Since $I_\chi\neq0$, Lemma~\ref{endpoints} and Propositions
\ref{evenzeros} and \ref{oddzeros} show that, for every
sufficiently large $d$, all zeros of $H_d$ lie on $\Sone$
if $I_\chi>0$. If $I_\chi<0$, its only zeros off $\Sone$
are a simple negative reciprocal pair
$-\rho_n,-\rho_n^{-1}$ with $\rho_n>1$.
The substitution $y=x^2$ proves the asserted alternatives
for the zeros of $S_{2n}$.\\

Suppose $I_\chi<0$. Then, the Rouch\'e argument in the proof of
Theorem~\ref{arithmeticlimit} applies to every sufficiently
small disk $|t-t_\chi|<\delta$.
The zero of $H_d(-t)$ in the disk about $t_\chi$
is $\rho_n^{-1}$. Since $\delta$ can be chosen
arbitrarily small, we obtain
\[
\rho_n^{-1}\longrightarrow t_\chi,
\qquad
\rho_n\longrightarrow t_\chi^{-1}>1.
\]
Finally, fix $R$ with $\max\{1/4,t_\chi\}<R<1$. Since $t_\chi$ is simple,
there are $c,\delta>0$ such that
$|\Phi_\chi(t)|\geq c|t-t_\chi|$ when $|t-t_\chi|\leq\delta$.
For all sufficiently large $n$, the uniform estimate in the proof
of Theorem~\ref{arithmeticlimit}, applied at $t=\rho_n^{-1}$, gives
\[
|\rho_n^{-1}-t_\chi|=O_{\chi,R}(R^d).
\]
Since $d=n-\e$ and $\rho_n\to t_\chi^{-1}$, this proves
$\rho_n-\rho_\chi=O_{\chi,R}(R^n)$.

\end{proof}

\medskip

\section{\bf Concluding remarks}

\medskip

The counterexample \eqref{intro:example} shows that the
Riemann hypothesis for period polynomials does not extend
to all Eisenstein newforms. In particular, the full period
polynomial of
$E_6(163\tau;\chi_{163},\chi_{163})$ has four nonzero zeros off
the circle $|X|=1/163$, although this Eisenstein series
is a Hecke eigenform, new of level $163^2$, with trivial
Nebentypus. Thus the Hecke eigenform and newform conditions
alone do not imply the zero-location theorem known for
cuspidal newforms.\\

Theorem~\ref{eventualzeros} describes the eventual behavior
for each fixed character in its hypotheses. If $I_\chi>0$,
all nonzero zeros lie on $\Sone$ in every sufficiently
large degree. If $I_\chi<0$, exactly four nonzero zeros lie off
$\Sone$, and these converge to four distinct points off
the unit circle. A natural question is therefore
to understand how the sign of $I_\chi$ varies with $\chi$.\\

Computations suggest that $I_\chi<0$ occurs with
a positive limiting proportion both among odd primitive
real characters, ordered by conductor, and among even
imprimitive real nonprincipal characters, ordered by
modulus. In the latter family, each induced character
is counted separately at each modulus. Up to $10^6$,
the respective proportions are
\[
\frac{7025}{303968}\approx2.3111\%,
\qquad
\frac{1995}{3422260}\approx0.0583\%.
\]
We expect both limiting proportions to lie strictly
between $0$ and $1$. Proving their existence and
positivity, and determining their values, remain open.
Such a result would show that a positive proportion
of characters in each family give four non-unimodular
nonzero zeros in every sufficiently large degree.\\

A further problem is to obtain useful upper bounds for the threshold $n_0(\chi)$
in Theorem~\ref{eventualzeros} explicit in terms of
arithmetic data. The proof gives
\[
H_d(-1)=2I_\chi+O_\chi\left( \frac{d}{4^{d}} \right)
\qquad(d\ \text{even}),
\]
and
\[
\frac{H_d'(-1)}{d}
=I_\chi+O_\chi\left( \frac{1}{d}\right)
\qquad(d\ \text{odd}).
\]
Tracking the constants yields an effective threshold
once a positive lower bound for $|I_\chi|$ is available. Such bounds require quantitative control of $|I_\chi|$ in terms of the conductor and the primes dividing the modulus but not the conductor. Understanding the sign and size of $I_\chi$ therefore remains a natural direction for further investigation.

\section*{AI disclosure}
ChatGPT (non-pro versions; models GPT-5.6 Sol and GPT-6 Astra) was used to assist with exploratory investigations and the computation and verification of examples. All numerical bounds used in the proofs are established by explicit theoretical arguments. The author takes responsibility for the mathematical results and arguments presented in this paper.

\section*{Acknowledgments}
The author would like to thank Mrityunjoy Charan and Asif Zaman for helpful discussions.

\end{document}